
%
%
\documentclass[dmucheads]{dmucthesis}

\usepackage[portuguese]{babel}
\usepackage{url}
\usepackage{alltt}
\usepackage[utf8x]{inputenc} 
\usepackage[T1]{fontenc}
\usepackage{amssymb}
\usepackage[all,arc]{xy}
\usepackage{amsthm}
\usepackage{makeidx}
\usepackage{xcolor}
\usepackage{amsmath}
\usepackage{graphicx}
\usepackage{caption}
\usepackage{latexsym}
\usepackage{multicol}
\usepackage{amsmath}
\usepackage{tikz}
\usepackage{verbatim}
\usepackage{fancyhdr}
\usetikzlibrary{arrows,decorations.pathmorphing,backgrounds,fit,matrix}

\addto\captionsportuguese{%
}

\newtheorem{tma}{Teorema}[section]
\newtheorem{cor}[tma]{Corolário}
\newtheorem{pro}[tma]{Proposição}
\newtheorem{lem}[tma]{Lema}

\theoremstyle{definition}
\newtheorem{ex}{Exemplo}[section]
\newtheorem{cex}[ex]{Contra-exemplo}
\newtheorem{defc}[ex]{Definição}
\newtheorem{nota}[ex]{Nota}

\def\inv#1{#1^{-1}}

\def\nar{ \save \ar@{>->} \restore }
\def\rar{ \ar@{->>}}

\def\id{\mathrm{id}}

\tikzset{ar/.style={thick,->},nar/.style={thick,>->},rar/.style={thick,->>}}
\tikzset{descr/.style={fill=white,inner sep=2.5pt}}
\tikzset{bij/.style={above,sloped,inner sep=0.5pt}}

\def\mathbfdef#1{\expandafter\def\csname#1\endcsname{{\rm\bf#1}}}
\mathbfdef{Conj} \mathbfdef{Grp} \mathbfdef{Ab} \mathbfdef{Top} \mathbfdef{C} \mathbfdef{Eq(C)} \mathbfdef{GrpTop}
\def\EqC{{\rm\bf Eq(C)}}
\def\PtC{{\rm\bf Pt(C)}}
\def\Pt#1{\mathrm{\mathbf{Pt}_{#1}(\mathbf{C})\ }}

\DeclareMathOperator{\im}{im}
\DeclareMathOperator{\coig}{coig}
\DeclareMathOperator{\Alg}{\mathbf{Alg}}
\DeclareMathOperator{\Obj}{Obj}
\DeclareMathOperator{\Aut}{Aut}

\def\Teoria{\mathbb {T}}


\hyphenation{se-guin-tes}
\hyphenation{ca-te-go-ria}
\hyphenation{ca-te-go-rias}
\hyphenation{dia-gra-ma}
\hyphenation{axio-mas}
\hyphenation{va-rie-dade}

\thesistitulo{VARIEDADES DE ÁLGEBRAS TOPOLÓGICAS}
\thesisautor{Lucas Taylor Earl}
\thesisgrau{Geometria, Álgebra e Análise}

\thesispresidente{Maria Manuela Oliveira de Sousa Antunes Sobral}
\thesisorientador{Maria Manuel Pinto Lopes Ribeiro Clementino}
\thesisvogalA{Gonçalo Gutierres da Conceição}

\thesisdatarealizacao{Data: 14 de Junho de 2013}

\thesisresumo{%
\pagestyle{plain}
As álgebras topológicas têm propriedades que estendem às dos grupos topológicos \cite{Borceux,Borceux2}, mas será que existem produtos semidirectos para álgebras topológicas, tal como no caso dos grupos? Primeiramente, expressam-se conceitos na linguagem de categorias que capturam as propriedades dos grupos. A seguir, observam-se resultados sobre grupos topológicos agora estendidos para variedades topológicas. No final, deduz-se que para álgebras topológicas que satisfazem determinados axiomas, existem produtos semidirectos e caracterizamo-los como em \cite{Inyangala}.
}

\thesisabstract{
Topological algebras have properties that extend naturally to those of topological groups \cite{Borceux,Borceux2}, but is it the case that semi-direct products exist as in the category of groups? Firstly, we express concepts in categorical language that capture group properties. We then observe the results about topological groups now extended to varieties of topological algebras. We conclude showing that for topological algebras obeying certain axioms, there exist semi-direct products which we caracterize like in \cite{Inyangala}.
}

\thesiskeywords{protomodular, semi-direct product, universal algebra, topology}

\thesispalavraschave{protomodular, produto semidirecto, álgebra, to\-po\-lo\-gia}

\begin{document}
\thispagestyle{empty}
\thesismaketitle
\thispagestyle{empty}
\pagestyle{plain}
\thesismakeabstract

\thesisagradecimentos{
\pagenumbering{roman}
À Professora Doutora Maria Manuel Pinto Lopes Ribeiro Clementino expresso o meu sincero agradecimento pelo apoio fundamental,  disponibilidade na partilha do saber e pelos seus valiosos conselhos. Acima de tudo, obrigado por estimular o meu interesse pelo estudo da matemática.
\\

Também queria agradecer ao Professor Doutor Andrea Montoli pela sua ajuda no estudo dos monomorfismos normais e no esclarecimento de dúvidas ao longo desta jornada.
\\

Ao Departamento de Matemática da Universidade de Coimbra, agra\-de\-ço as condições disponibilizadas para a realização deste trabalho.
\\

À minha família, pelos valores morais que sempre me transmitiram e por me ajudarem a crescer pessoal e intelectualmente. Pelo inestimável apoio apesar da distância, e pela contínua paciência e compreensão.
\\

À Mara Sofia da Cruz Antunes, à Teresa Sousa e ao Jason Nobre Bolito pelas suas sugestões linguísticas e pela amizade profunda prestada.  
\\

A todos os demais...
}
\tableofcontents

\setcounter{chapter}{-1}
\chapter{Introdução}
Álgebra e topologia são dois ramos desenvolvidos na matemática com abordagens diferentes. A topologia explora as questões de conectividade, compactidão, etc., de espaços, enquanto que na álgebra se trabalha com estruturas de elementos que satisfazem axiomas equacionais. Essas estruturas são aplicadas em várias partes da mateḿatica aplicada, inclusive à física teórica onde representam partículas, campos, ondas, interacções, etc. Uma questão pertinente que surge é se se pode definir objectos que são simultaneamente algébricos e topológicos.

Em 1891, Sophus Lie abriu o caminho introduzindo os seus «grupos contínuos» que são munidos da estrutura de uma variedade diferenciável, que em particular, é um espaço topológico \cite{Lie}. Inicialmente os grupos de Lie foram aplicados às equações diferenciáveis e aos grupos de transformações, porém, estes têm aplicações importantes em vários ramos matemáticos e, em especial, na matemática aplicada. No entanto, obter-se-ia maior generalidade nas estruturas implementadas. Interessava, por exemplo, estudar os espaços topológicos sobre uma álgebra específica.

Grupos topológicos, primeiramente estudados por Schreier em 1925 \cite{Schreier}, são um exemplo do poder de uma álgebra topológica, ou seja, um grupo munido de uma topologia «compatível» com este. Os resultados que deles se obtêm são fantásticos. Começando com Van Dantzig \cite{VanDantzig} (que foi o primeiro a usar o termo {\it grupos topológicos}) outras estruturas simultaneamente algébricas e topológicas foram inspeccionadas: anéis, corpos, módulos.

Com o aparecimento da teoria das categorias, foi natural considerar uma álgebra sobre uma teoria algébrica qualquer. Observou-se que para uma teoria qualquer o functor de esquecimento preserva limites e colimites. A caracterização de álgebras protomodulares levou Borceux e Clementino a um método sistemático para provar resultados clássicos sobre essas álgebras \cite{Borceux3}.

\begin{center}* * *\end{center}

No capítulo 1 apresentam-se ambientes apropriados dentro da teoria das categorias para o estudo da álgebra. Consideramos os conceitos de grupos tais como lemas homológicos, quocientes e relações nesse prisma. Comparamos este ambiente às categorias abelianas mostrando que as semi-abelianas capturam propriedades de grupos, enquanto que as abelianas representam os grupos abelianos.

No capítulo 2, começa-se por expor os grupos topológicos, verificando que se encaixam no ambiente das categorias homológicas, e portanto, satisfazem os lemas de homologia. Demonstram-se também os resultados conhecidos dos grupos topológicos e introduz-se a noção que estes formam uma variedade de álgebras topológicas. Conclui-se mostrando que os resultados para $\GrpTop$ se estendem para as variedades de álgebras topológicas.

O capítulo 3 inicia-se apresentando os produtos semidirectos em $\Grp$. Em seguida, mostra-se que existe uma generalização desta noção para categorias protomodulares que equivale no caso dos grupos. No final demonstra-se que existem produtos semidirectos em álgebras topológicas sobre teorias que satisfazem certos axiomas.

\chapter{Categorias semi-abelianas}
As categorias de módulos sobre um anel têm propriedades bem identificadas, que há mais de 50 anos levaram ao conceito de categoria abeliana \cite{MacLane2,MacLane}. No entanto, a categoria $\Grp$ dos grupos e homomorfismos não é  abeliana e surge uma questão natural de como se podem capturar as propriedades essenciais dos grupos no am\-biente das categorias. Durante décadas, investigadores dedicados foram desenvolvendo aos poucos conceitos que representam vários comportamentos de $\Grp.$ Este estudo culminou com a noção de categoria semi-abeliana formulada por Janelidze, Márki e Tholen  \cite{Janelidze} que é a junção da noção de categoria exacta de Barr com a de categoria protomodular de Bourn \cite{Barr,Bourn}.

\begin{figure}[h] \centering
\begin{tikzpicture}[parent anchor=south,child anchor=north,grow=south,sibling distance=2.6cm, level distance=3cm,
nt/.style={rectangle, minimum size=6mm, 
very thick, draw=green!50!black!50, top color=white, bottom color=red!50!black!20, font=\itshape
}, ntt/.style={rectangle, minimum size=6mm, 
very thick, draw=green!60!black!40, top color=white, bottom color=green!40!black!30, font=\itshape
}]

\node[nt]{abeliana}[parent anchor=east, child anchor=west,grow=east, edge from parent/.style={-,thick,draw}]
	child {node[nt] {aditiva}}
	child {node[ntt] {semi-abeliana}
		child{node[nt] (e) {exacta} child{node[nt]{efectiva}} child {coordinate(b)}}
		child{node[nt] (h) {homológica} child {node[nt] (r){regular}} child{node[nt] {protomodular}} child {node[nt] {pontuada}}}};

\path[-,very thick,draw,white] (e.east) edge (b);
\path[-,thick,draw] (e) edge (r);
\path[-,thick,draw] (h.east) edge (r.west);

\end{tikzpicture}
\caption{}
\end{figure}

\section{Relações de equivalência internas}
Em grupos tal como em variedades universais não se pode menosprezar o conceito de relações de equivalência (denominadas congruências em álgebra universal). Os teoremas de isomorfismo de Emmy N\"other são teoremas sobre essas relações de equivalência na categoria dos grupos. Na secção \ref{cat_hom} é apresentado um ambiente mais geral em que esses teoremas homológicos ainda são válidos--para uma descrição conceptual remete-se o leitor para \cite{Clementino}.

Com esse objectivo, introduz-se a noção interna de relação de equivalência. Para isso, em primeiro lugar caracterizam-se, em linguagem de categorias, as relações de equivalência na categoria $\Conj$ dos conjuntos e funções. Em \Conj, uma relação do conjunto $X$ no conjunto $Y$ é um subconjunto do seu produto: $R\subseteq X\times Y$. Para as relações sobre o mesmo conjunto, $R\subseteq X\times X,$ pode definir-se uma relação de equivalência, i.e., uma relação que é reflexiva, simétrica e transitiva. No âmbito da teoria das categorias, examinam-se as propriedades de uma relação de equivalência visando as suas projecções. 

Em \Conj, a relação $\sim$ será reflexiva se cada elemento for em relação a si próprio, isto é, a relação $R\rightrightarrows X$ for reflexiva em $\Conj$ se, para todo o $x \in X, \ (x,x)\in R,$ ou seja, se a diagonal de $X$, $\Delta_X=\{(x,x) | x\in X\}$, for um subconjunto de $R.$ Pode dizer-se ainda que o morfismo $(1_X, 1_X)$ se factoriza através de $R,$ como mostra o diagrama seguinte:
\[
\begin{tikzpicture}
\matrix (m) [matrix of math nodes, row sep=1.8cm, column sep=3cm]
{&X&\\&R&\\&X\times X & \\ X& & X.\\};
\path[ar]
(m-1-2) edge (m-2-2)
(m-2-2) edge node[auto]{$(d_1,d_2)$} (m-3-2)
(m-3-2) edge node[auto]{$p_1$} (m-4-1)
(m-3-2) edge node[auto,swap]{$p_2$} (m-4-3);
\path[ar,out=200,in=100] (m-1-2) edge node[auto,swap]{$1_X$} (m-4-1);
\path[ar,out=-20,in=80] (m-1-2) edge node[auto]{$1_X$} (m-4-3);
\path[ar,out=200,in=100] (m-2-2) edge node[auto]{$d_1$} (m-4-1);
\path[ar,out=-20,in=80] (m-2-2) edge node[auto,swap]{$d_2$} (m-4-3);
\end{tikzpicture}
\]

Para ser simétrica, $R$ satisfará: $$(x,y)\in R \Leftrightarrow (y,x) \in R,$$ isto é, se permutar o papel da primeira e segunda componentes a relação manter-se-á. Se $R$ for simétrica, seja
$\sigma:R\to R;\ (x,y)\to (y,x)$ a função que permuta os papéis das suas componentes. Devem verificar-se as identidades seguintes: $d_2 \circ \sigma = d_1$ e $d_1 \circ \sigma = d_2.$

Transitividade em \Conj\ significa a implicação: $(x,y), (y,z) \in R \Rightarrow (x,z) \in R.$ Pretende escrever-se esta propriedade também em termos de morfismos. Seja $S = \{(x,y,z) | (x,y), (y,z) \in R\};$ garantidamente existem as projecções: $(x,y,z)\mathop{\mapsto}\limits^{q_2} (x,y);$ e $(x,y,z)\mathop{\mapsto}\limits^{q_1} (y,z),$ e se $R$ for transitiva define-se $\tilde{q}:S\to R;\ (x,y,z)\mapsto (x,z)$ onde todos estes devem satisfazer o diagrama a seguir:

\[
\begin{tikzpicture}
\matrix (m) [matrix of math nodes, column sep=1.5cm, row sep=.5cm]{& (x,y) & \\ && x\\ (x,y,z) &(x,z)&\\ && z \\ & (y,z)&.\\};
\path[|->][out=70,in=200] (m-3-1) edge node[auto]{$q_2$} (m-1-2);
\path[|->] (m-3-1) edge node[auto]{$\tilde{q}$} (m-3-2);
\path[|->][out=-70,in=-200] (m-3-1) edge node[auto,swap]{$q_1$} (m-5-2);

\path[|->][out=30,in=200] (m-3-2) edge node[auto,swap]{$d_2$} (m-2-3);
\path[|->][out=-30,in=-200] (m-3-2) edge node[auto]{$d_1$} (m-4-3);

\path[|->][out=30,in=200] (m-5-2) edge node[auto,swap]{$d_1$} (m-4-3);
\path[|->][out=-30,in=-200] (m-1-2) edge node[auto]{$d_2$} (m-2-3);
\end{tikzpicture}
\]
Pode averiguar-se que o terno $(S,q_1,q_2)$ assim definido é o produto fibrado de $\begin{tikzpicture}
\matrix (m) [matrix of math nodes,column sep=1cm,row sep=1cm]
{R & X & R.\\};
\path[ar] (m-1-1) edge node[auto]{$d_1$} (m-1-2) 
(m-1-3) edge node[auto,swap]{$d_2$} (m-1-2);
\end{tikzpicture}$

Com estas propriedades, define-se a noção de uma relação de equivalência interna numa categoria qualquer com produtos fibrados, generalizando a noção em $\Conj$.

\begin{defc}
    Uma relação $d_1,d_2:R\rightrightarrows X$ numa categoria \C\ com produtos fibrados diz-se:
    \begin{itemize}
        \item Reflexiva se $(1_X,1_X)$ se factorizar através de $R;$
        \item Simétrica se existir $\sigma: R \to R$ que satisfaça $d_1 \circ \sigma = d_2,$ e $d_2 \circ \sigma = d_1;$
        \item Transitiva se no produto fibrado $(R\times_X R,q_1,q_2)$:

\[
\begin{tikzpicture}
\matrix (m) [matrix of math nodes,column sep=1cm,row sep=1cm]
{R\times_X R & R\\ R & X,\\};
\path[ar]
(m-1-1) edge node[auto]{$q_2$} (m-1-2)
(m-1-1) edge node[auto,swap]{$q_1$} (m-2-1)
(m-2-1) edge node[auto,swap]{$d_1$} (m-2-2)
(m-1-2) edge node[auto]{$d_2$} (m-2-2);
\end{tikzpicture}
\]
existir um morfismo $\tilde{q}:R\times_X R\to R$ tal que $d_2 \circ q_1 = d_2 \circ \tilde q$ e $d_1 \circ q_2 = d_1 \circ \tilde q.$
    \end{itemize}
    Uma relação numa categoria diz-se de equivalência se for reflexiva, simétrica e transitiva.
\end{defc}

As relações de equivalência internas dentro de uma categoria formam uma categoria \EqC, que tem como objectos as relações de equivalência e como morfismos diagramas comutativos da forma:

\begin{center}
\begin{tikzpicture}
\matrix (m) [matrix of math nodes,column sep=1.7cm,row sep=1.7cm]
{R & R'\\ X\times X & Y\times Y\\};
\path[ar]
(m-1-1) edge node[auto]{$\tilde{f}$} (m-1-2)
([xshift=-.5mm]m-1-1.south) edge node[auto,swap]{$d_1$} ([xshift=-.5mm]m-2-1.north)
([xshift=-.5mm]m-1-2.south) edge node[auto,swap]{$d_1'$} ([xshift=-.5mm]m-2-2.north)
([xshift=.5mm]m-1-1.south) edge node[auto]{$d_2$}([xshift=.5mm]m-2-1.north)
([xshift=.5mm]m-1-2.south) edge node[auto]{$d_2'$} ([xshift=.5mm]m-2-2.north)
(m-2-1) edge node[auto,swap]{$(f,f)$} (m-2-2);
\end{tikzpicture}
\end{center}
onde $f:X\to Y$ é um morfismo em \C.


Na matéria que irá ser abordada ainda neste capítulo usaremos relações de equivalência especiais. Anunciemos a definição do par núcleo de um morfismo.

\begin{defc}
O par núcleo de um morfismo $f:X\to Y$ é o par das projecções no produto fibrado:
\[
\begin{tikzpicture}
\matrix (m) [matrix of math nodes, column sep=1.3cm, row sep=1.3cm] 
{R[f] & X\\X & Y.\\};
\path[ar]
(m-1-1) edge node[auto]{$\pi_1$} (m-1-2)
(m-1-2) edge node[auto]{$f$} (m-2-2)
(m-1-1) edge node[auto,swap]{$\pi_2$} (m-2-1)
(m-2-1) edge node[auto,swap]{$f$} (m-2-2);
\end{tikzpicture}
\]Usa-se a notação $R[f]$ para designar $X\times_Y X.$
\end{defc}

\begin{pro}
Para qualquer morfismo $f:X\to Y$ em \C , o par núcleo \[
\begin{tikzpicture}
\matrix (m) [matrix of math nodes, column sep=1.3cm, row sep=1.3cm,text height=1.5ex,text depth=.25ex] 
{R[f] & X\\};
\path[ar] ([yshift=-.5mm]m-1-1.east) edge node[auto,swap]{$\pi_2$} ([yshift=-.5mm]m-1-2.west)
([yshift=.5mm]m-1-1.east) edge node[auto]{$\pi_1$} ([yshift=.5mm]m-1-2.west);
\end{tikzpicture}
\]
é uma relação de equivalência.
\end{pro}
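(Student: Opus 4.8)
O plano é verificar as três cláusulas da definição de relação de equivalência interna aplicando repetidamente a propriedade universal do produto fibrado que define $R[f]=X\times_Y X$. Recordo que as projecções $\pi_1,\pi_2:R[f]\to X$ cumprem $f\circ\pi_1=f\circ\pi_2$ e que, dado qualquer objecto $Z$ e morfismos $u,v:Z\to X$ com $f\circ u=f\circ v$, existe um único $\langle u,v\rangle:Z\to R[f]$ tal que $\pi_1\circ\langle u,v\rangle=u$ e $\pi_2\circ\langle u,v\rangle=v$. Cada uma das três propriedades reduz-se a uma aplicação desta propriedade, pelo que nenhuma exige cálculo em coordenadas.

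Para a reflexividade, observo que $f\circ 1_X=f\circ 1_X$ trivialmente, donde a propriedade universal fornece um morfismo $r=\langle 1_X,1_X\rangle:X\to R[f]$ com $\pi_1\circ r=1_X=\pi_2\circ r$. Este $r$ é exactamente a factorização de $(1_X,1_X)$ através de $R[f]$ pedida na definição. Para a simetria, parto de $f\circ\pi_1=f\circ\pi_2$, que também se lê $f\circ\pi_2=f\circ\pi_1$; aplicando a propriedade universal ao par $(\pi_2,\pi_1)$ obtenho $\sigma=\langle\pi_2,\pi_1\rangle:R[f]\to R[f]$, que satisfaz $\pi_1\circ\sigma=\pi_2$ e $\pi_2\circ\sigma=\pi_1$, como pretendido.

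O passo mais delicado, e onde espero ter de ter mais cuidado, é a transitividade, por envolver o segundo produto fibrado $R[f]\times_X R[f]$ com projecções $q_1,q_2$ sujeitas a $\pi_1\circ q_1=\pi_2\circ q_2$. A ideia é definir $\tilde{q}$ como o morfismo associado ao par $(\pi_1\circ q_2,\pi_2\circ q_1)$, o que requer primeiro verificar a condição de compatibilidade $f\circ\pi_1\circ q_2=f\circ\pi_2\circ q_1$. Esta decorre da cadeia $f\circ\pi_1\circ q_2=f\circ\pi_2\circ q_2=f\circ\pi_1\circ q_1=f\circ\pi_2\circ q_1$, em que os passos extremos usam $f\circ\pi_1=f\circ\pi_2$ e o passo central usa a igualdade do produto fibrado $\pi_2\circ q_2=\pi_1\circ q_1$. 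Fica então definido $\tilde{q}=\langle\pi_1\circ q_2,\pi_2\circ q_1\rangle$, com $\pi_1\circ\tilde{q}=\pi_1\circ q_2$ e $\pi_2\circ\tilde{q}=\pi_2\circ q_1$; estas são precisamente as identidades $d_1\circ q_2=d_1\circ\tilde{q}$ e $d_2\circ q_1=d_2\circ\tilde{q}$ da definição, lembrando que aqui $d_1=\pi_1$ e $d_2=\pi_2$.

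A única subtileza reside em não trocar os papéis das projecções $q_1$ e $q_2$: intuitivamente, $R[f]\times_X R[f]$ corresponde aos ternos $(x,y,z)$ com $f(x)=f(y)=f(z)$, sendo $q_2$ o par $(x,y)$, $q_1$ o par $(y,z)$ e $\tilde{q}$ o par $(x,z)$; confirmar esta correspondência assegura que a colagem se faz pela componente comum $y$ e que as composições acima ficam do lado certo. Estabelecidas as três cláusulas, conclui-se que $R[f]\rightrightarrows X$ é uma relação de equivalência.
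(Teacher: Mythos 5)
A sua demonstração está correcta e segue essencialmente o mesmo caminho da do texto: reflexividade e simetria pela propriedade universal do produto fibrado aplicada aos pares $(1_X,1_X)$ e $(\pi_2,\pi_1)$, e transitividade formando $R[f]\times_X R[f]$ e induzindo $\tilde{q}$ a partir do par $(\pi_1\circ q_2,\pi_2\circ q_1)$. O único acréscimo é que explicita a verificação de compatibilidade $f\circ\pi_1\circ q_2=f\circ\pi_2\circ q_1$, que no texto fica implícita na comutatividade do diagrama exterior.
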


\begin{proof}
O facto de a relação $R[f]$ ser reflexiva e simétrica vem da propriedade universal, como é visível nos diagramas abaixo:
\[
\begin{array}{cc}
\begin{tikzpicture}
\matrix (m) [matrix of math nodes, column sep=1.3cm,row sep=1.3cm]{X &&\\& R[f] & X\\ & X &Y\\};
\path[ar,dashed] (m-1-1) edge (m-2-2);
\path[ar] (m-2-2) edge node[auto]{$\pi_2$} (m-2-3)
(m-2-2) edge node[auto,swap]{$\pi_1$} (m-3-2)
(m-3-2) edge node[auto,swap]{$f$} (m-3-3)
(m-2-3) edge node[auto]{$f$} (m-3-3);
\path[ar][out=0,in=120] (m-1-1) edge node[auto]{$1_X$} (m-2-3)
[out=270,in=150] (m-1-1) edge node[auto,swap]{$1_X$} (m-3-2);
\end{tikzpicture}

& 
\begin{tikzpicture}
\matrix (m) [matrix of math nodes, column sep=1.3cm,row sep=1.3cm]{R[f] &&\\& R[f] & X\\ & X &Y\\};
\path[ar,dashed] (m-1-1) edge node[auto]{$\sigma$} (m-2-2);
\path[ar] (m-2-2) edge node[auto]{$\pi_2$} (m-2-3)
(m-2-2) edge node[auto,swap]{$\pi_1$} (m-3-2)
(m-3-2) edge node[auto,swap]{$f$} (m-3-3)
(m-2-3) edge node[auto]{$f$} (m-3-3);
\path[ar][out=0,in=120] (m-1-1) edge node[auto]{$\pi_1$} (m-2-3)
[out=270,in=150] (m-1-1) edge node[auto,swap]{$\pi_2$} (m-3-2);
\end{tikzpicture}\\
\end{array}
\]
Formando o produto fibrado de $R[f]$: 

\[\begin{tikzpicture}
\matrix (m) [matrix of math nodes,column sep=1.3cm,row sep=1.3cm]
{R[f]\times_XR[f] & R[f]\\ R[f] & X,\\};
\path[ar]
(m-1-1) edge node[auto]{$\xi_1$} (m-1-2)
(m-1-1) edge node[auto,swap]{$\xi_2$} (m-2-1)
(m-2-1) edge node[auto,swap]{$\pi_2$} (m-2-2)
(m-1-2) edge node[auto]{$\pi_1$} (m-2-2);
\end{tikzpicture}\] 
e aplicando a propriedade universal comprova-se que $R[f]$ é transitiva, como o diagrama a seguir indica:
\[
\begin{tikzpicture}
\matrix (m) [matrix of math nodes, column sep=1.3cm,row sep=1.3cm]{R[f]\times_X R[f] &&\\& R[f] & X\\ & X &Y.\\};
\path[ar,dashed] (m-1-1) edge node[auto]{$\tilde\xi$} (m-2-2);
\path[ar] (m-2-2) edge node[auto]{$\pi_2$} (m-2-3)
(m-2-2) edge node[auto,swap]{$\pi_1$} (m-3-2)
(m-3-2) edge node[auto,swap]{$f$} (m-3-3)
(m-2-3) edge node[auto]{$f$} (m-3-3);
\path[ar][out=0,in=120] (m-1-1) edge node[auto]{$\pi_2\circ \xi_1$} (m-2-3)
[out=270,in=150] (m-1-1) edge node[auto,swap]{$\pi_1\circ\xi_2$} (m-3-2);
\end{tikzpicture}
\]
\end{proof}

Às relações de equivalência que sejam pares núcleos chamam-se efectivas. Nem toda a relação de equivalência é efectiva. Isto acontece, por exemplo, na categoria dos espaços topológicos e nas aplicações contínuas, \Top.

\begin{ex}
Construir, para cada relação de equivalência, um morfismo que mostra que esta é efectiva é um procedimento directo em muitas categorias como $\Conj.$ Seja $d_1,d_2: R \rightrightarrows X$ uma equivalência sobre o conjunto $X.$ Descreve-se a projecção $\pi:X\to X/_\sim$ entre $X$ e o conjunto $X/_\sim$ das classes de equivalência, o que leva cada elemento de $X$ para a sua classe. Deste modo $R[\pi]$ é isomorfa a $R.$ \qed
\end{ex}

\section{Categorias exactas}
Categorias exactas--definidas pela primeira vez em \cite{Barr}--são categorias onde existe um sistema «óptimo»\ de factorização (no caso das categorias abelianas \(f = \mathrm{im}f \circ \mathrm{coim} f\)) e em que as relações de equivalência internas são sempre pares núcleos, podendo ser escritas em termos de aplicações quo\-cientes.

Nem todo o homomorfismo de grupos se factoriza através de um núcleo após um conúcleo como acontece no caso de categorias abelianas. Considere-se um homomorfismo entre grupos $f: X \to Y.$ Se $f$ for um homomorfismo entre grupos abelianos, a imagem de $f$ é um subgrupo normal de $Y,$ mas para grupos em geral, nem sempre se confirma. Se $f(X)$ não for normal, então não se pode definir $Y/f(X)$ e, por esse motivo, não é possível factorizar $f$ através da sua imagem.

No entanto, existe uma factorização mais fraca em $\Grp.$ Forma-se o grupo quociente, $X/f$, que consiste nas classes de equivalência na relação gerada por $f$ (i.e., $x\sim x' \Leftrightarrow f(x) = f(x')$). Escreve-se então, $f=m\circ e$ onde $m:X/f\to Y;\ [x]\mapsto f(x)$ e $e:X \to X/f;\ x\mapsto [x]$. Esses morfismos estão bem definidos como se pode comprovar. Além disso, $m$ é injectivo e portanto um monomorfismo. O morfismo $e:X\to X/f$ é um co-igualizador de $(R[f],\pi_1,\pi_2).$ Porém, esta factorização (de monomorfismo após epimorfismo regular em $\Grp$) deriva de um co-igualizador, razão pela qual é única a menos de isomorfismo.


Nesta secção identificam-se, inicialmente, propriedades de categorias que serão exactamente aquelas com este tipo de factorização única.

\begin{defc}
Uma categoria finitamente completa diz-se regular se:
\begin{enumerate}
    \item Tiver co-igualizadores de pares núcleos;
    \item Os seus epimorfismos regulares forem estáveis para produtos fibrados.
\end{enumerate}
\end{defc}

De modo a verificar a utilidade desta definição provaremos que numa categoria regular qualquer morfismo tem uma factorização canónica tal como no caso dos grupos.

\begin{pro}
\label{factor}
Numa categoria regular, qualquer morfismo $f:X\to Y$ em \C\ tem uma factorização única (a menos de isomorfismo) \(f=m\circ e,\) onde $e$ é um epimorfismo regular e $m$ é um monomorfismo.
\end{pro}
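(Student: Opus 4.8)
The plan is to construct the factorization explicitly using the regular-category axioms, then prove uniqueness via the universal property that makes $e$ a coequalizer. First I would form the kernel pair $(R[f],\pi_1,\pi_2)$ of $f$, which exists since \C\ is finitely complete. By axiom (1) of a regular category, the kernel pair has a coequalizer; call it $e\colon X\to I$, so that $e\circ\pi_1=e\circ\pi_2$ and $e$ is, by construction, a regular epimorphism. Since $f$ itself satisfies $f\circ\pi_1=f\circ\pi_2$ (this is exactly the defining property of the pullback $R[f]$), the universal property of the coequalizer yields a unique morphism $m\colon I\to Y$ with $m\circ e=f$. This gives the desired factorization $f=m\circ e$ with $e$ a regular epimorphism; what remains is to check that $m$ is a monomorphism.

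The key step is showing $m$ is monic. The standard argument is to compute the kernel pair of $e$ and compare it with the kernel pair of $f$. Since $e$ coequalizes $(\pi_1,\pi_2)$ and $f=m\circ e$, one shows that the kernel pair $R[e]$ of $e$ coincides with $R[f]=R[m\circ e]$; intuitively, coequalizing the kernel pair of $f$ produces a map whose own kernel pair is no larger. From $R[e]=R[m\circ e]$ one deduces that the two projections of $R[m]$ pulled back along $e\times e$ agree, and because $e$ (being a regular epimorphism) is an epimorphism, and products of regular epimorphisms remain epic by axiom (2), this forces the two projections of $R[m]$ to be equal. A morphism whose kernel pair has equal projections (i.e.\ $R[m]\cong I$ via the diagonal) is precisely a monomorphism, so $m$ is monic.

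For uniqueness, suppose $f=m'\circ e'$ is another factorization with $e'$ a regular epimorphism and $m'$ a monomorphism. Because $m'$ is monic, $e'$ coequalizes the kernel pair of $f$: indeed $m'\circ e'\circ\pi_1=f\circ\pi_1=f\circ\pi_2=m'\circ e'\circ\pi_2$, and cancelling the monomorphism $m'$ gives $e'\circ\pi_1=e'\circ\pi_2$. Since $e'$ is a regular epimorphism it is the coequalizer of its own kernel pair, and a comparison shows that $e'$ and our $e$ are both coequalizers of $(\pi_1,\pi_2)$, hence there is a unique isomorphism $\varphi$ with $\varphi\circ e=e'$; the relation $m'\circ\varphi=m$ then follows by cancelling the epimorphism $e$. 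Thus the factorization is unique up to a unique compatible isomorphism.

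The main obstacle I anticipate is the proof that $m$ is a monomorphism, and specifically the identification $R[e]=R[f]$ together with the passage from ``projections of $R[m]$ agree after pulling back along $e\times e$'' to ``projections of $R[m]$ agree.'' This is exactly where axiom (2)--stability of regular epimorphisms under pullback, which guarantees $e\times e$ is still a (regular) epimorphism and hence right-cancellable--is indispensable; without it the epic cancellation fails and $m$ need not be monic. Everything else is a routine application of universal properties of pullbacks and coequalizers.
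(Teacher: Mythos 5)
Your proposal is correct and builds the factorization exactly as the paper does: form the kernel pair $(R[f],\pi_1,\pi_2)$, take its coequalizer $e$ (which exists by axiom (1)), and obtain $m$ from the universal property. Where you diverge is in the key step that $m$ is a monomorphism. The paper disposes of this in one line by citing Corolário \ref{cor} (a consequence of the Barr--Kock theorem, Teorema \ref{bk}): since $R[f]\cong R[e]$ and $e$ is a pullback-stable strong epimorphism, $m$ is monic. You instead give the direct diagram argument: the comparison morphism $R[m\circ e]\to R[m]$ is an epimorphism (being built from pullbacks of $e$, which are regular epimorphisms by axiom (2)), and it equalizes the two projections of $R[m]$ because $e$ coequalizes the projections of $R[f]=R[m\circ e]$; cancelling the epimorphism gives $q_1=q_2$, i.e.\ $m$ monic. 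Both routes are standard; yours is self-contained and makes visible exactly where pullback-stability is used, while the paper's is shorter at the cost of leaning on the appendix. The one place you should tighten the wording is the claim that the projections of $R[m]$ ``pulled back along $e\times e$'' agree: to cancel you need the induced map $R[m\circ e]\to R[m]$ to be epic, which is cleanest to see by writing it as a composite of two pullbacks of $e$ rather than as a single pullback of $e\times e$ (whose regularity would itself require that regular epimorphisms compose). Finally, you prove the uniqueness clause explicitly --- noting that a monomorphism $m'$ forces $e'$ to coequalize $R[f]$ and hence to be \emph{the} coequalizer of $(\pi_1,\pi_2)$ --- whereas the paper's proof leaves uniqueness implicit in the remark that the factorization ``deriva de um co-igualizador''; this is a welcome addition rather than a deviation.
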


\begin{proof}
Considere-se o par núcleo de $f,$
\[
\begin{tikzpicture}[text height=1.5ex,text depth=.25ex]

\matrix (m) [matrix of math nodes, column sep=1.5cm,row sep=1.5cm]{R[f] & X & Y.\\};
\path[ar]
(m-1-2) edge node[auto] {$f$} (m-1-3)
([yshift=-.5mm]m-1-1.east) edge node[auto,swap]{$\pi_1$} ([yshift=-.5mm]m-1-2.west)
([yshift=.5mm]m-1-1.east) edge node[auto]{$\pi_2$} ([yshift=.5mm]m-1-2.west);
\end{tikzpicture}
\]
Seja $e:X\to Q$ o co-igualizador das projecções $\pi_1$ e $\pi_2$, então pela propriedade universal do co-igualizador existe um único morfismo $m$ como é evidente no diagrama a seguir:
\[
\begin{tikzpicture}[text height=1.5ex,text depth=.25ex]
\matrix (m) [matrix of math nodes, column sep=1.8cm,row sep=1.8cm]{R[f] & X && Y\\
&&Q&.\\};
\path[ar]
(m-1-2) edge node[auto] {$f$} (m-1-4)
(m-1-2) edge node[auto,swap]{$e$} (m-2-3)
([yshift=-.5mm]m-1-1.east) edge ([yshift=-.5mm]m-1-2.west)
([yshift=.5mm]m-1-1.east) edge ([yshift=.5mm]m-1-2.west);
\path[ar,dashed](m-2-3) edge node[auto,swap]{$m$} (m-1-4);
\end{tikzpicture}
\]
O facto de $m$ ser um monomorfismo vem do corolário \ref{cor} e $R[f]\cong R[e].$
\end{proof}

\begin{defc}
Na demonstração da proposição anterior, o morfismo $m:Q\to Y$ que é designado por $\im(f)$, é chamado de imagem de $f.$
\end{defc}

\begin{cor}
    Numa categoria regular os epimorfismos regulares coincidem com os fortes.
\end{cor}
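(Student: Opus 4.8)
O plano é estabelecer as duas inclusões entre estas classes de morfismos. A inclusão dos epimorfismos regulares nos fortes é válida em qualquer categoria e decorre directamente da propriedade universal do co-igualizador; a inclusão recíproca é onde a estrutura regular intervém, apoiando-se na factorização canónica da Proposição \ref{factor}.

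Primeiro mostraria que todo o epimorfismo regular é forte. Seja $e:X\to Y$ o co-igualizador de um par $a,b:R\rightrightarrows X$, de modo que $e\circ a = e\circ b$, e considere-se um quadrado comutativo $m\circ f = g\circ e$ com $m:C\to D$ monomorfismo, $f:X\to C$ e $g:Y\to D$. Compondo com $a$ e $b$ obtém-se $m\circ f\circ a = g\circ e\circ a = g\circ e\circ b = m\circ f\circ b$; como $m$ é monomorfismo, vem $f\circ a = f\circ b$. A propriedade universal do co-igualizador fornece então um único $d:Y\to C$ com $d\circ e = f$, e de $m\circ d\circ e = m\circ f = g\circ e$ deduz-se $m\circ d = g$ por $e$ ser epimorfismo. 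Logo $e$ admite diagonal (única, uma vez que $e$ é epimorfismo) relativamente a qualquer monomorfismo, ou seja, é forte.

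Para a recíproca, tomaria um epimorfismo forte $e:X\to Y$ e aplicaria a Proposição \ref{factor} para escrever $e = m\circ e'$, onde $e'$ é um epimorfismo regular e $m$ um monomorfismo. Formaria o quadrado comutativo cujo lado superior é $e'$, o lado esquerdo $e$, o lado direito $m$ e o inferior $1_Y$, que comuta precisamente porque $m\circ e' = e$. Sendo $e$ forte e $m$ monomorfismo, existe uma diagonal $d:Y\to Q$ com $d\circ e = e'$ e $m\circ d = 1_Y$. De $m\circ d = 1_Y$ resulta $m\circ(d\circ m) = (m\circ d)\circ m = m = m\circ 1_Q$ e, cancelando o monomorfismo $m$, obtém-se $d\circ m = 1_Q$; assim $d$ é inverso de $m$, que é portanto um isomorfismo. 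Como $e = m\circ e'$ é a composta de um isomorfismo com um epimorfismo regular, $e$ é um epimorfismo regular.

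O passo delicado é o da recíproca, em particular a verificação de que $m$ é um isomorfismo: é aí que se combinam a propriedade de levantamento diagonal dos epimorfismos fortes e a existência da factorização garantida pela Proposição \ref{factor}. Obtido que $m$ é invertível, a conclusão de que a composta $m\circ e'$ permanece um co-igualizador (pois um isomorfismo preserva co-igualizadores) é rotineira.
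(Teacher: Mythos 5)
A sua demonstração está correcta e segue essencialmente o caminho que o texto deixa implícito: a inclusão dos epimorfismos regulares nos fortes é a que se prova no apêndice, e a recíproca combina a factorização da Proposição \ref{factor} com o levantamento diagonal contra o monomorfismo $m$ para concluir que $m$ é um isomorfismo. Como o texto não apresenta demonstração explícita deste corolário, o seu argumento preenche correctamente essa lacuna pela via pretendida.
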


Salienta-se mais uma propriedade útil das factorizações em categorias regulares.

\begin{pro}
\label{factorim}
Dado um quadrado comutativo numa categoria regular:
\[
\begin{tikzpicture}
\matrix (m) [matrix of math nodes, column sep=1.5cm, row sep=1.5cm]{\cdot & \cdot \\ \cdot & \cdot \\};
\path[ar] (m-1-1) edge node[auto]{$f'$} (m-1-2)
(m-1-1) edge node[auto,swap]{$a$} (m-2-1)
(m-2-1) edge node[auto,swap]{$f$} (m-2-2)
(m-1-2) edge node[auto]{$b$} (m-2-2);
\end{tikzpicture}
\]
existe uma factorização através das imagens, isto é, existe um único morfismo $\theta$ que torna o diagrama seguinte comutativo:
\[
\begin{tikzpicture}
\matrix (m) [matrix of math nodes, column sep=2cm, row sep=1.5cm, text height=1.5ex, text depth=0.25ex]{\cdot & \cdot & \cdot \\ \cdot & \cdot & \cdot .\\};
\path[ar] 
(m-1-1) edge node[auto,swap]{$a$} (m-2-1)
(m-1-3) edge node[auto]{$b$} (m-2-3);
\path[rar](m-2-1) edge node[auto,swap]{$e$} (m-2-2)
(m-1-1) edge node[auto]{$e'$} (m-1-2);
\path[nar] (m-1-2) edge node[auto]{$m'$} (m-1-3)
(m-2-2) edge node[auto,swap]{$m$} (m-2-3);
\path[ar,dashed] (m-1-2) edge node[auto,swap]{$\theta$} (m-2-2);
\path[ar] (m-2-1) edge[out=-60,in=-120] node[auto,swap]{$f'$} (m-2-3)
(m-1-1) edge[out=60,in=120] node[auto]{$f'$} (m-1-3);
\end{tikzpicture}
\]
\end{pro}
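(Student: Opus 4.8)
The plan is to reduce the existence and uniqueness of $\theta$ to the orthogonality between regular epimorphisms and monomorphisms supplied by the preceding corollary. Label the corners of the square $A$ (top-left), $B$ (top-right), $C$ (bottom-left), $D$ (bottom-right), so that $f':A\to B$, $a:A\to C$, $b:B\to D$, $f:C\to D$, and the hypothesis reads $b\circ f' = f\circ a$. By Proposition \ref{factor}, each of the horizontal maps factors through its image: write $f' = m'\circ e'$ with $e':A\to I'$ a regular epimorphism and $m':I'\to B$ a monomorphism, and $f = m\circ e$ with $e:C\to I$ a regular epimorphism and $m:I\to D$ a monomorphism. The morphism $\theta\colon I'\to I$ we seek is exactly a filler for the outer rectangle.

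First I would assemble the relevant lifting square. Setting $u = e\circ a:A\to I$ and $v = b\circ m':I'\to D$, the hypothesis gives $v\circ e' = b\circ m'\circ e' = b\circ f' = f\circ a = m\circ e\circ a = m\circ u$, so the square with top edge $u$, left edge $e'$, right edge $m$ and bottom edge $v$ commutes.

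Next I would invoke the corollary that in a regular category every regular epimorphism is strong. Since $e'$ is a regular epimorphism it is strong, hence it is left orthogonal to the monomorphism $m$: the commuting square $v\circ e' = m\circ u$ admits a unique diagonal $\theta:I'\to I$ satisfying $\theta\circ e' = u = e\circ a$ and $m\circ\theta = v = b\circ m'$. These two identities are precisely the commutativities demanded of $\theta$ in the diagram, and the uniqueness part of the orthogonality condition is exactly the asserted uniqueness.

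I expect the only genuine content — and thus the step to state carefully — to be the recognition that the hypothesis $b\circ f' = f\circ a$ \emph{is} a lifting problem for $e'$ against $m$; everything after that is the formal diagonal fill-in. If one prefers to avoid strong epimorphisms, the same $\theta$ can be produced directly from a universal property: by Proposition \ref{factor}, $e'$ is the coequalizer of the kernel pair $\pi_1,\pi_2:R[f']\rightrightarrows A$, and since $f\circ a\circ\pi_1 = b\circ f'\circ\pi_1 = b\circ f'\circ\pi_2 = f\circ a\circ\pi_2$ with $m$ monic, the map $e\circ a$ coequalizes $\pi_1,\pi_2$ and therefore factors uniquely as $\theta\circ e'$; the remaining identity $m\circ\theta = b\circ m'$ then follows by cancelling the epimorphism $e'$ on the right.
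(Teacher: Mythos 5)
Your proposal is correct and, in substance, coincides with the paper's proof: the paper argues directly that $e\circ a$ coequalizes the kernel pair of $f'$ (which is the kernel pair of $e'$, since $m'$ is monic), using that $m$ is a monomorphism to cancel it from $m\circ e\circ a\circ\alpha_1=m\circ e\circ a\circ\alpha_2$ — exactly your fallback argument at the end. Your primary route via the orthogonality of the strong epimorphism $e'$ against the monomorphism $m$ is only a repackaging of the same computation, since the paper's own proof that regular epimorphisms are strong unwinds to precisely that kernel-pair calculation; both give existence and uniqueness of $\theta$, with $m\circ\theta=b\circ m'$ obtained by cancelling the epimorphism $e'$.
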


\begin{proof}
Seja $(\alpha_1,\alpha_2)$ o par núcleo de $f',$ e como estamos numa categoria regular o par núcleo de $f'$ é exactamente o par núcleo de $e'.$ De $b\circ f = m \circ e \circ a,$ decorre $$m\circ e \circ a \circ\alpha_1 = m\circ e \circ a \circ \alpha_2.$$ Todavia, como $m$ é um monomorfismo obtém-se $e\circ a\circ\alpha_1 = e\circ a\circ\alpha_2$ e então $e\circ a$ factoriza-se através de $e'$:
\[
\begin{tikzpicture}
\matrix (m) [matrix of math nodes, column sep=1.5cm, row sep=1.5cm, text height=1.5ex, text depth=0.25ex]{I & \cdot & \cdot & \cdot \\& \cdot & \cdot & \cdot .\\};
\path[ar] 
(m-1-2) edge node[auto,swap]{$a$} (m-2-2)
(m-1-4) edge node[auto]{$b$} (m-2-4);
\path[rar](m-2-2) edge node[auto,swap]{$e$} (m-2-3)
(m-1-2) edge node[auto]{$e'$} (m-1-3);
\path[nar] (m-1-3) edge node[auto]{$m$} (m-1-4)
(m-2-3) edge node[auto,swap]{$m$} (m-2-4);
\path[ar] ([yshift=-.4mm] m-1-1.east) edge node[auto,swap]{$\alpha_2$} ([yshift=-.4mm] m-1-2.west)
([yshift=.4mm] m-1-1.east) edge node[auto]{$\alpha_1$} ([yshift=.4mm] m-1-2.west);
\path[ar,dashed] (m-1-3) edge (m-2-3);
\end{tikzpicture}
\]
\end{proof}

Adiciona-se uma condição à definição de regular para definir uma categoria exacta de Barr.

\begin{defc}
Uma categoria \C\ é \emph{exacta} se for regular e \emph{efectiva} (toda a sua relação de equivalência interna é efectiva).
\end{defc}


Nesta altura, afirma-se que a categoria dos conjuntos é exacta. 
\begin{pro}
Apesar de ter co-igualizadores quaisquer, a categoria dos espaços topológicos não é regular
.
\end{pro}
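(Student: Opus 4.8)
O plano é reduzir a afirmação às duas condições da definição de categoria regular. Como \Top\ é completa e cocompleta, possui em particular co-igualizadores de pares núcleo, pelo que a primeira condição se verifica automaticamente --- é precisamente este o conteúdo do ``apesar de ter co-igualizadores quaisquer''. Resta então mostrar que a segunda condição falha, isto é, que os epimorfismos regulares de \Top\ não são estáveis para produtos fibrados. Começaria por identificar esses epimorfismos: em \Top\ um morfismo é epimorfismo regular se e só se for uma aplicação quociente (uma sobrejecção contínua $q\colon X\to Y$ em que $Y$ tem a topologia quociente), pois os co-igualizadores calculam-se munindo o co-igualizador de conjuntos da topologia quociente, e reciprocamente toda a aplicação quociente é o co-igualizador do seu par núcleo. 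Assim, basta exibir uma aplicação quociente cujo produto fibrado ao longo de algum morfismo não seja uma aplicação quociente.

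O contra-exemplo que usaria é o clássico (de tipo Dowker), em que o produto de uma aplicação quociente pela identidade de um espaço \emph{não} localmente compacto deixa de ser quociente; a hipótese de não compacidade local é essencial, uma vez que, pelo teorema de Whitehead, o produto por um espaço localmente compacto preserva quocientes. Concretamente, seja $\mathbb{Q}$ o espaço dos racionais e $q\colon \mathbb{Q}\to \mathbb{Q}/\mathbb{Z}$ a aplicação quociente que colapsa o subconjunto $\mathbb{Z}$ dos inteiros a um único ponto $*$. Formando o produto fibrado de $q$ ao longo da projecção $\pi\colon (\mathbb{Q}/\mathbb{Z})\times\mathbb{Q}\to \mathbb{Q}/\mathbb{Z}$ obtém-se o quadrado
\[
\begin{tikzpicture}
\matrix (m) [matrix of math nodes, column sep=1.8cm, row sep=1.5cm]
{\mathbb{Q}\times\mathbb{Q} & \mathbb{Q}\\ (\mathbb{Q}/\mathbb{Z})\times\mathbb{Q} & \mathbb{Q}/\mathbb{Z},\\};
\path[ar]
(m-1-1) edge node[auto]{$p_1$} (m-1-2)
(m-1-1) edge node[auto,swap]{$q\times\id$} (m-2-1)
(m-2-1) edge node[auto,swap]{$\pi$} (m-2-2)
(m-1-2) edge node[auto]{$q$} (m-2-2);
\end{tikzpicture}
\]
em que a projecção obtida por mudança de base é exactamente $q\times\id_{\mathbb{Q}}\colon \mathbb{Q}\times\mathbb{Q}\to (\mathbb{Q}/\mathbb{Z})\times\mathbb{Q}$.

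Faltaria verificar dois pontos. Primeiro, que $q\times\id_{\mathbb{Q}}$ continua a ser um epimorfismo: isto é imediato, pois em \Top\ o produto fibrado de uma sobrejecção é uma sobrejecção, logo um epimorfismo. Segundo --- e esta será a principal dificuldade --- que $q\times\id_{\mathbb{Q}}$ \emph{não} é uma aplicação quociente, ou seja, não é um epimorfismo regular. Para tal, construir-se-ia um subconjunto saturado $U\subseteq \mathbb{Q}\times\mathbb{Q}$ (reunião de fibras de $q\times\id$), aberto em $\mathbb{Q}\times\mathbb{Q}$, cuja imagem não seja aberta em $(\mathbb{Q}/\mathbb{Z})\times\mathbb{Q}$; equivalentemente, mostrar-se-ia que a imagem não é aberta num ponto $(*,z_0)$ porque nenhum ``tubo'' $O\times W$, com $O\supseteq\mathbb{Z}$ aberto e $W\ni z_0$ aberto, cabe dentro de $U$. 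O mecanismo que torna isto possível é a fibra sobre $*$ ser $\mathbb{Z}$, infinita e não localmente compacta em $\mathbb{Q}$: escolhendo em torno de cada inteiro $n$ janelas verticais que encolhem à medida que $|n|$ cresce, obtém-se um aberto saturado em forma de ``pente'' que não admite nenhum tubo uniforme. A construção explícita deste aberto e a verificação de que a sua imagem não é aberta são a parte técnica delicada; o resto reduz-se às propriedades universais já estabelecidas. Estabelecidos estes dois pontos, $q\times\id_{\mathbb{Q}}$ é um epimorfismo que não é epimorfismo regular, pelo que os epimorfismos regulares de \Top\ não são estáveis para produtos fibrados e, portanto, \Top\ não é regular, apesar de possuir todos os co-igualizadores.
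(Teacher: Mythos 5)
A sua estratégia global coincide com a do texto: reduzir tudo a mostrar que os epimorfismos regulares de \Top\ (as aplicações quociente) não são estáveis para produtos fibrados, exibindo uma aplicação quociente cujo produto fibrado ao longo de algum morfismo deixa de ser quociente. A diferença está no contra-exemplo. O texto usa um exemplo finito minúsculo --- conjuntos com quatro, três e três pontos e topologias dadas explicitamente --- no qual a verificação de que a projecção $\pi_2$ não é quociente se reduz a uma observação de uma linha: $\{(a_1,b_1)\}$ é aberto no produto fibrado mas a sua imagem $\{b_1\}$ não é aberta. Você recorre ao exemplo clássico de tipo Dowker, $q\times\id_{\mathbb{Q}}$ com $q:\mathbb{Q}\to\mathbb{Q}/\mathbb{Z}$, que é válido e até mais informativo (liga o fenómeno à falta de compacidade local, via o teorema de Whitehead), e o seu cálculo do produto fibrado e a identificação dos epimorfismos regulares com os quocientes estão correctos. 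Porém, a verificação essencial --- a construção do aberto saturado em «pente» cuja imagem não é aberta em nenhum ponto $(*,z_0)$ --- fica explicitamente por fazer, e é precisamente esse o único passo não trivial de toda a demonstração: sem ele, a afirmação de que $q\times\id_{\mathbb{Q}}$ não é quociente está apenas enunciada, não provada. O esboço que dá do mecanismo (janelas verticais em torno de cada $n\in\mathbb{Z}$ que encolhem quando $|n|$ cresce, de modo que nenhum tubo $O\times W$ caiba no aberto) é o correcto e a construção é recuperável, pelo que a lacuna é de execução e não de ideia; mas tal como está, a demonstração não fecha. A lição a retirar da escolha do texto é que, para efeitos desta proposição, compensa escolher um contra-exemplo finito em que essa verificação é imediata, reservando o exemplo de $\mathbb{Q}/\mathbb{Z}$ para quando se quer compreender o papel da compacidade local.
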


\begin{proof}
Epimorfismos em $\Top$ nem sempre são estáveis para produtos fibrados. Para ver isso basta um contra-exemplo:
Definamos conjuntos subjacentes
\[A=\{a_1,a_2,a_3,a_4\},\ B=\{b_1,b_2,b_3\}\text{\  e\  } C=\{c_1,c_2,c_3\};\]
e funções
\[f:A\to C,\text{\  e \ } g:B\to C\] com $a_1,b_1\mapsto c_1,\ a_2,a_3\mapsto c_2,$ \hbox{$a_4,b_2,b_3\mapsto c_3.$} Se $\{a_1,a_2\}$ e $\{b_1,b_3\}$ pertencerem às topologias de $A$ e $B$ respectivamente e se a topologia de $C$ for indiscreta, confirma-se de maneira directa que $f$ é uma aplicação quociente mas no produto fibrado:
\[
\begin{tikzpicture}
\matrix (m) [matrix of math nodes, row sep=4em,
column sep=3.4em, text height=1.5ex, text depth=0.25ex]
{A\times_C B & B \\ A & C\\};
\path[ar] (m-1-1) edge node[auto]{$\pi_2$} (m-1-2)
(m-1-1) edge node[auto,swap]{$\pi_1$} (m-2-1)
(m-1-2) edge node[auto]{$g$} (m-2-2)
(m-2-1) edge node[auto,swap]{$f$} (m-2-2);
\end{tikzpicture}
\] $\pi_2$ não é um quociente, pois $\{(a_1,b_1)\} = \pi_1^{-1}(\{a_1,a_2\})$ é aberto mas $\{b_1\}$ não é.
\end{proof}

\section{Categorias protomodulares}
A noção de sucessão exacta é uma noção importante no estudo da homologia. Considera-se nesta secção um teorema de homologia bem conhecido, o lema dos cinco, que diz: 
\begin{tma}{Lema dos Cinco\\}
Num diagrama comutativo em $\Grp$:
\[
\begin{tikzpicture}
\matrix (m) [matrix of math nodes, row sep=1.3cm, column sep=1.3cm, text height=1.5ex, text depth=0.25ex]{K[f] & X & Y\\ K[f'] & X' & Y'\\};
\path[ar] 
(m-1-1) edge node[auto,swap] {$a$} (m-2-1)
(m-1-2) edge node[auto] {$b$} (m-2-2)
(m-1-3) edge node[auto] {$c$} (m-2-3);

\path[rar]
(m-2-2) edge node[auto,swap] {$f'$} (m-2-3)
(m-1-2) edge node[auto] {$f$} (m-1-3);
\path[nar]
(m-1-1) edge node[auto] {$\ker f$} (m-1-2)
(m-2-1) edge node[auto,swap] {$\ker f'$} (m-2-2);
\end{tikzpicture}
\]sendo $f$ e $f'$ epimorfismos regulares, se $a$ e $c$ forem isomorfismos, então $b$ é um isomorfismo.
\end{tma}

O objectivo principal desta secção é caracterizar as categorias $\C$ que satisfazem um teorema mais fraco, o lema cindido dos cinco que, para além das hipóteses do lema dos cinco requer que $f$ e $f'$ sejam epimorfismos cindidos. Inicialmente definem-se subcategorias da categoria $\C\downarrow B $ sobre um objecto $B$ de $\C$, que chamaremos a categoria dos pontos sobre $B$. Seguidamente define-se a noção de categoria protomodular e mostra-se que esta equivale ao lema cindido dos cinco numa categoria pontuada.

Se \C\ é uma categoria finitamente completa, \PtC\ denota a categoria cujos objectos são os epimorfismos cindidos e cujos morfismos são pares de morfismos (no diagrama $(\gamma_1,\gamma_2)$) que tornam o diagrama comutativo:

\[
\begin{tikzpicture}
\matrix (m) [matrix of math nodes, row sep=1cm, column sep=1cm] 
{X & X' \\ Y & Y'\\};
\path[ar]
(m-1-1) edge node[auto] {$\gamma_1$} (m-1-2)
([xshift=0.5mm] m-1-1.south) edge node[auto] {$f$} ([xshift=0.5mm]m-2-1.north)
([xshift=-0.5mm]m-2-1.north) edge node[auto] {$s$} ([xshift=-0.5mm]m-1-1.south)
(m-2-1) edge node[auto,swap] {$\gamma_2$} (m-2-2)
([xshift=0.5mm] m-1-2.south) edge node[auto] {$f'$} ([xshift=0.5mm] m-2-2.north)
([xshift=-0.5mm] m-2-2.north) edge node[auto] {$s'$} ([xshift=-0.5mm] m-1-2.south);
\end{tikzpicture}
\]
Para cada objecto $X,$ $\Pt{X}$ é definida como sendo a categoria cujos objectos os de \PtC\ com codomínio $X,$ e cujos morfismos são os morfismos $(\gamma_1,\gamma_2)$ com $\gamma_2=1_X.$ Cada morfismo $v:X\to Y$ em \C\ induz um functor de mudança de base:
\[v^*: \Pt{Y} \to \Pt{X},\]
através de produtos fibrados como mostra o diagrama seguinte:
\begin{equation}\begin{tikzpicture}[scale=1.4]
\node (p2) at (xyz cs:y=3,x=0,z=2.5) {$X\times_Y B $};
\node (p1) at (xyz cs:y=2,x=0,z=-2) {$X\times_Y B' $};
\node (b1) at (xyz cs:y=2,x=4,z=-2) {$ B'$};
\node (b2) at (xyz cs:y=3,x=4,z=2.5) {$ B$};
\node (x) at (xyz cs:y=0,x=0,z=0) {$ X$};
\node (y) at (xyz cs:y=0,x=4,z=0) {$ Y.$};

\path[ar] 
(p1) edge (b1)
(p2)edge (b2)
(p2)edge node[auto]{$v^*\gamma$}(p1)
(p1)edge node[auto]{$v^*g'$}(x)
(p2)edge node[auto,swap]{$v^*g$}(x)
(x)edge node[auto,swap]{$v$}(y)
(b1)edge node[auto]{$g'$}(y)
(b2)edge node[auto,swap]{$g$}(y)
(b2)edge node[auto]{$\gamma$}(b1);
\end{tikzpicture}
\label{pm}
\end{equation}

No diagrama $v^*\gamma$ é o morfismo único garantido pela propriedade universal e os morfismos: \begin{align*} v^*g:\ &X \times_Y B \to X\\ v^*g':\ & X\times_Y B' \to X \end{align*} são imagens dos epimorfismos cindidos \(g:B \to Y\), $g':B'\to Y$ que são igualmente epimorfismos cindidos.

\begin{defc}
Uma categoria \C\ finitamente completa diz-se protomodular se no diagrama:
\label{def_protomodular}
\[
\begin{tikzpicture}
\matrix (m) [matrix of math nodes, column sep = 8mm, row sep=8mm] {A & & B & & C\\ &\fbox{1}&&\fbox{2}& \\A'&&B'&&C'\\};
\draw[ar]
(m-1-1) edge node[auto]{$a$} (m-1-3)
(m-1-3) edge node[auto]{$b$}(m-1-5)
(m-3-1) edge node[auto,swap]{$a'$}(m-3-3)
(m-1-1) edge node[auto,swap]{$f$} (m-3-1)
(m-1-5) edge node[auto]{$h$} (m-3-5)
(m-3-3) edge node[auto,swap]{$b'$} (m-3-5)
([xshift=-0.5mm]m-1-3.south) edge node[auto,swap]{$g$} ([xshift=-0.5mm]m-3-3.north)
([xshift=0.5mm]m-3-3.north)  -- node[auto,swap]{$s$}([xshift=0.5mm]m-1-3.south);
\end{tikzpicture}
\]
onde $g$ é um epimorfismo cindido. Se $\fbox{1}$ e $\fbox{1}\fbox{2}$ forem produtos fibrados, então $\fbox{2}$ também o será.
\end{defc}

Nesta secção mostra-se que ser protomodular equivale a satisfazer o lema cindido dos cinco («split short five lemma»).
\begin{pro}
    Numa categoria finitamente completa \C, as afirmações seguintes equivalem-se:
    \begin{enumerate}
        \item \C\ ser protomodular
        \item Para todo o morfismo $v:X\to Y$ em \C, o functor $v^*:\Pt{Y}\to \Pt{X}$ ser conservativo (reflectir isomorfismos).
    \end{enumerate}
\end{pro}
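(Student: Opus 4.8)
The plan is to prove the two implications separately, in each case by building the diagram that lets us invoke the property we are allowed to assume. The guiding principle is that both conditions are ultimately statements about pullbacks, so the work is to recognise one as an instance of the other; the only delicate point is making sure the split epimorphism ends up in the middle column, since Definition \ref{def_protomodular} grants its cancellation property \emph{only} there.

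Assume first that \(\C\) is protomodular and let us deduce that \(v^*\) is conservative. Fix \(v\colon X\to Y\) and a morphism of points \(\gamma\colon (g\colon B\to Y)\to(g'\colon B'\to Y)\) over \(Y\) with \(v^*\gamma\) an isomorphism; I would aim to show \(\gamma\) is an isomorphism. The key reduction is that \(\gamma\) is an isomorphism precisely when the square with top edge \(\gamma\), verticals \(g,g'\) and bottom edge \(1_Y\) is a pullback (the pullback of \(g'\) along \(1_Y\) is \(B'\), and the induced comparison is \(\gamma\)). I would therefore instantiate the diagram of Definition \ref{def_protomodular} with middle column \(B\xrightarrow{g}Y\) — so the middle vertical is the genuine split epimorphism \(g\) — right column \(B'\xrightarrow{g'}Y\), top-right map \(\gamma\), bottom-right map \(1_Y\), and left column obtained by pulling \(g\) back along \(v\), i.e.\ \(A=X\times_Y B\), \(A'=X\), \(a=p_B\), \(a'=v\) and \(f=v^*g\). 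Then \(\fbox{1}\) is the defining pullback of \(v^*g\), while the outer rectangle \(\fbox{1}\fbox{2}\) is a cone over the cospan defining \(X\times_Y B'\) whose comparison map \(X\times_Y B\to X\times_Y B'\) is exactly \(v^*\gamma\); hence \(\fbox{1}\fbox{2}\) is a pullback precisely because \(v^*\gamma\) is an isomorphism. Protomodularity now forces \(\fbox{2}\) to be a pullback, i.e.\ \(\gamma\) to be an isomorphism.

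For the converse, assume every \(v^*\) is conservative and start from a diagram as in Definition \ref{def_protomodular} with \(g\colon B\to B'\) split (section \(s\)) and with \(\fbox{1}\) and \(\fbox{1}\fbox{2}\) pullbacks; the goal is \(\fbox{2}\) a pullback. I would form the pullback \(B'\times_{C'}C\) of \(b'\) and \(h\), so that \(\fbox{2}\) is a pullback iff the canonical comparison \(\lambda=\langle g,b\rangle\colon B\to B'\times_{C'}C\) is an isomorphism. The projection \(q\colon B'\times_{C'}C\to B'\) is a split epimorphism with section \(\langle 1_{B'},bs\rangle\) (legitimate since \(h\,b\,s=b'g s=b'\)), and \(\lambda\) is a morphism of points over \(B'\) from \((B,g)\) to \((B'\times_{C'}C,q)\). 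Applying \((a')^*\) along \(a'\colon A'\to B'\): since \(\fbox{1}\) is a pullback, \((a')^*(B,g)\cong(A,f)\); since \(\fbox{1}\fbox{2}\) is a pullback and \(A'\times_{B'}(B'\times_{C'}C)\cong A'\times_{C'}C\), also \((a')^*(B'\times_{C'}C,q)\cong(A,f)\); and under these identifications \((a')^*\lambda\) is \(1_{(A,f)}\) by the universal properties. Thus \((a')^*\lambda\) is an isomorphism, and conservativity of \((a')^*\) yields that \(\lambda\) is an isomorphism, so \(\fbox{2}\) is a pullback.

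The routine verifications I am suppressing are the commutativity checks and the identification of \((a')^*\lambda\) with the identity under the two pullback isomorphisms. The main obstacle is not computational but organisational: it is the placement of the split epimorphism in the middle column. In the first implication the non-obvious move is to take \(1_Y\) as the lower-right map, which converts the desired statement ``\(\gamma\) is an isomorphism'' into ``\(\fbox{2}\) is a pullback'' while keeping the honest split epimorphism \(g\) — rather than the a priori non-split \(\gamma\) — in the position where Definition \ref{def_protomodular} can be applied.
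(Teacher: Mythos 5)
Your proof is correct and follows essentially the same route as the paper's: for $(1)\Rightarrow(2)$ you place the split epimorphism $g:B\to Y$ in the middle column with $1_Y$ as the lower-right edge so that $\fbox{2}$ being a pullback says exactly that $\gamma$ is an isomorphism, and for $(2)\Rightarrow(1)$ you form $B'\times_{C'}C$, observe that its projection to $B'$ is split, and apply conservativity of $(a')^*$ to the comparison $\lambda$. Your write-up is in fact more explicit than the paper's about the section $\langle 1_{B'},bs\rangle$ and about why $(a')^*\lambda$ is an isomorphism.
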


\begin{proof}
    Sejam \C\ uma categoria protomodular, $v:X\to Y$ um morfismo em \C\ e
\[
\begin{tikzpicture}[scale=2]
\matrix(m) [matrix of math nodes, column sep=1cm, row sep=1cm] {X\times_Y B & B & B' \\ X & Y & Y\\};
\path[ar] (m-1-1) edge (m-1-2) (m-1-2) edge node[auto] {$\gamma$} (m-1-3) (m-1-1) edge (m-2-1) (m-2-1) edge node[auto,swap] {$v$} (m-2-2) (m-1-2) edge node[auto,swap] {$g$} (m-2-2) (m-1-3) edge node[auto] {$g'$} (m-2-3);
\path (m-2-2) edge[double, double distance=1mm] (m-2-3);
\end{tikzpicture}
\]
um diagrama em \C. Se $v^*(\gamma)$ (o morfismo único entre $X\times_Y B \to X\times_Y B'$ dado pela propriedade de produto fibrado) for um isomorfismo, $\fbox{1}\fbox{2}$ e $\fbox{1}$ são produtos fibrados dando a conclusão pretendida.

Para provar a outra implicação considere-se o diagrama a seguir:

\[
\begin{tikzpicture}
\matrix (m) [matrix of math nodes, column sep=1.6cm, row sep=1.6cm]
{A & B \\
& & B'\times_{C'} C & C \\
A'& & B' &C'. \\
};
\path[ar]
(m-1-1) edge node[auto]{$a$} (m-1-2)
(m-1-1) edge node[auto,swap]{$f$} (m-3-1)
(m-1-2) edge node[auto]{$b$} (m-2-4)
(m-1-2) edge node[auto,swap]{$g$} (m-3-3)
(m-3-3) edge node[auto,swap]{$b'$}  (m-3-4)
(m-3-1) edge node[auto,swap]{$a'$} (m-3-3)
(m-2-4) edge node[auto]{$h$} (m-3-4)
(m-2-3) edge (m-3-3)
(m-2-3) edge node[auto]{$t$} (m-2-4);
\path[ar,dashed] (m-1-2) edge node[auto,yshift=-1.3mm] {$\lambda$}(m-2-3);
\end{tikzpicture}
\]

Tal como no diagrama acima, acrescentamos o produto fibrado $B'\times_{C'} C$ ao diagrama de Definição \ref{def_protomodular}. Mostre-se que $\lambda$ é um isomorfismo.

Do quadrado comutativo, \[ \begin{tikzpicture}
\matrix (m) [matrix of math nodes, row sep=1.5cm, column sep=1.5cm, text height=1.5ex, text depth=0.25ex]{B' & C \\ B' & C',\\};
\path[ar] (m-1-1) edge node[auto]{$t\circ\lambda\circ s$} (m-1-2)
(m-1-1) edge node[auto,swap]{$1$} (m-2-1)
(m-1-2) edge node[auto]{$h$} (m-2-2)
(m-2-1) edge node[auto,swap]{$b'$}(m-2-2);
\end{tikzpicture} \]
resulta que $B'\times_{C'} C \to B'$ é cindido que permite aplicar o facto de $(a')^*$ ser conservativo.

De \fbox{1}\fbox{2} e \fbox{1} serem produtos fibrados deduz-se $A \cong A' \times_{B'} (B\times_{C'} C),$ o que implica o pretendido.
\end{proof}

Para uma categoria pontuada (uma categoria que tenha um objecto zero), par\-ti\-cu\-lariza-se que protomodularidade implica que, para todo o objecto $X$ em \C, $0^*_X$ é conservativo. Esta condição implica protomodularidade porque, para $v:X\to Y,$ $0_X = v \circ 0_Y,$ logo, $0_X^*= 0_X^* \circ v^*$ e $v^*$ é conservativo, quando $0^*_X$ e $0^*_Y$ forem. Desta caracterização sai de modo natural o lema cindido dos cinco. Primeiramente deve ser definida a noção de núcleo numa categoria pontuada.

\begin{defc}
Numa categoria pontuada \C\ o núcleo de um morfismo $f:X\to Y$ (denotado por (K[f],$\ker f$)) é o seu produto fibrado com o morfismo zero.
\end{defc}

Neste caso, escrevendo $\ker_X:\Pt{X} \to \Pt{0}$ em vez de $0_X^*$ obtemos uma particularidade do diagrama \ref{pm}:
\label{letsusethis}


\[\begin{tikzpicture}[scale=1.65]
\node (p2) at (xyz cs:y=3,x=0,z=2.5) {$K[f] $};
\node (p1) at (xyz cs:y=2,x=0,z=-2) {$K[f'] $};
\node (b1) at (xyz cs:y=2,x=4,z=-2) {$ X'$};
\node (b2) at (xyz cs:y=3,x=4,z=2.5) {$ X$};
\node (x) at (xyz cs:y=0,x=0,z=0) {$ 0$};
\node (y) at (xyz cs:y=0,x=4,z=0) {$ Y$};

\path[ar] 
(p1) edge (b1)
(p2)edge (b2)
(p2)edge[dashed] node[auto]{$\ker_X\gamma$}(p1)
(p1)edge node[auto]{$!$}(x)
(p2)edge node[auto,swap]{$!$}(x)
(x)edge node[auto,swap]{$0_Y$}(y)
(b1)edge node[auto]{$f'$}(y)
(b2)edge node[auto,swap]{$f$}(y)
(b2)edge node[auto]{$\gamma$}(b1);
\end{tikzpicture}
\]

Se numa categoria todo o functor $\ker_X$ reflectir isomorfismos, então deduz-se que o teorema seguinte é válido.
\begin{samepage}
\begin{tma}
Uma categoria pontuada \C\ é protomodular se e só se em cada diagrama da forma:

\[
\begin{tikzpicture}[scale=1.65]
\matrix (m) [matrix of math nodes, column sep=1.2cm,row sep=1.2cm, text height=1.5ex, text depth=0.25ex]
{K[f] & X & Y\\
K[f'] & X' & Y'\\};
\path[ar] 
(m-1-1) edge node[auto]{$\ker f$}(m-1-2)
(m-1-1) edge node[auto,swap]{$a$} (m-2-1)
(m-1-2) edge node[auto]{$f$} (m-1-3)
(m-1-2) edge node[auto,swap]{$b$} (m-2-2)
(m-1-3) edge node[auto]{$c$} (m-2-3)
(m-2-2) edge node[auto,swap]{$f'$} (m-2-3)
(m-2-1) edge node[auto,swap]{$\ker f'$} (m-2-2);
\end{tikzpicture}
\]
com $f$ e $f'$ epimorfismos cindidos, se $a$ e $c$ forem isomorfismos, então $b$ é um isomorfismo.
\end{tma}
\end{samepage}

Nem toda a variedade de álgebras (no sentido de álgebra universal) é protomodular. No entanto, há uma caracterização simples em termos das suas operações (Teorema \ref{this},\cite{Bourn4}). Em particular, as álgebras com uma operação de grupo são protomodulares. 

\section{Categorias homológicas}
\label{cat_hom}
O propósito desta secção é demostrar que uma categoria pontuada, regular e protomodular é um ambiente propício para o estudo de homologia. Designam-se tais categorias homológicas segundo \cite{Bourn}. 

Começa-se por provar o lema dos cinco para uma categoria homológica:

\begin{tma}{Lema dos Cinco\\}
Sendo \C\ uma categoria homológica, e $f$ um epimorfismo regular:
\[
\begin{tikzpicture}[scale=1.65]
\matrix (m) [matrix of math nodes, column sep=1.2cm,row sep=1.2cm, text height=1.5ex, text depth=0.25ex]
{K[f] & X & Y\\
K[f'] & X' & Y'\\};
\path[ar] 
(m-1-1) edge node[auto,swap]{$a$} (m-2-1)
(m-1-2) edge node[auto,swap]{$b$} (m-2-2)
(m-1-3) edge node[auto]{$c$} (m-2-3)
(m-2-2) edge node[auto,swap]{$f'$} (m-2-3);
\path[rar]
(m-1-2) edge node[auto]{$f$} (m-1-3);

\path[nar]
(m-1-1) edge node[auto]{$\ker f$}(m-1-2)
(m-2-1) edge node[auto,swap]{$\ker f'$} (m-2-2);
\end{tikzpicture}
\]
Se $a$ e $c$ forem isomorfismos, então $b$ é um isomorfismo.
\end{tma}

\begin{pro}\label{propo}
Seja \C\ uma categoria homológica. No diagrama seguinte:
\[
\begin{tikzpicture}
\matrix (m) [matrix of math nodes, column sep=.6cm,row sep=.6cm, text height=1.5ex, text depth=0.25ex]
{\cdot&&\cdot&&\cdot\\
&\fbox{\rm{1}}&&\fbox{\rm 2}&\\
\cdot&&\cdot&&\cdot\\};
\path[ar]
(m-1-1) edge (m-1-3)
(m-1-3) edge (m-1-5)
(m-3-1) edge (m-3-3)
(m-3-3) edge (m-3-5)
(m-1-1) edge node[auto,swap]{$f$} (m-3-1)
(m-1-3) edge node[auto,swap]{$g$} (m-3-3)
(m-1-5) edge node[auto,swap]{$h$} (m-3-5);
\end{tikzpicture}
\]
onde $g$ é regular, o facto de $\fbox{\rm1}$ e $\fbox{\rm1}\fbox{\rm2}$ serem produtos fibrados implica que $\fbox{\rm2}$ seja produto fibrado.
\end{pro}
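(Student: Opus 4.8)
O plano é reduzir o enunciado à invertibilidade de um único morfismo de comparação e, depois, reforçar o caso cindido já contido na Definição~\ref{def_protomodular} para o caso regular, passando ao par núcleo de $g$. Escreva-se o diagrama com linha superior $A\xrightarrow{a}B\xrightarrow{b}C$, linha inferior $A'\xrightarrow{a'}B'\xrightarrow{b'}C'$ e morfismos verticais $f,g,h$. Como $\fbox{1}$ e $\fbox{1}\fbox{2}$ são produtos fibrados, para provar que $\fbox{2}$ é produto fibrado basta mostrar que o morfismo canónico de comparação $\lambda\colon B\to B'\times_{C'}C$ induzido por $g$ e $b$ (bem definido porque $b'\circ g=h\circ b$) é um isomorfismo. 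Observo desde já que $f$ é um epimorfismo regular: no produto fibrado $\fbox{1}$ ele é o produto fibrado de $g$ ao longo de $a'$, e numa categoria regular os epimorfismos regulares são estáveis para produtos fibrados.

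O cerne do argumento é eliminar a hipótese de $g$ ser cindido. Como \C\ é regular, $g$ é o co-igualizador do seu par núcleo $g_1,g_2\colon R[g]\rightrightarrows B$, e cada projecção $g_i$ é um epimorfismo cindido, cindido pela diagonal $\delta\colon B\to R[g]$ com $g_1\delta=g_2\delta=1_B$. Formaria então o diagrama induzido sobre o par núcleo (puxando todo o diagrama para trás ao longo de $g_1$), obtendo um novo diagrama cuja coluna do meio é o epimorfismo cindido $g_1$; pela estabilidade dos produtos fibrados para mudança de base, o seu quadrado esquerdo e o seu rectângulo exterior continuam a ser produtos fibrados. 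A Definição~\ref{def_protomodular} aplica-se então literalmente e garante que o quadrado direito deste diagrama, ao nível do par núcleo, é um produto fibrado.

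Resta transferir esta conclusão para $\fbox{2}$ ao longo do co-igualizador $g$, e é neste passo de descida que espero residir a verdadeira dificuldade. É preciso mostrar que a propriedade de «ser produto fibrado» desce ao longo do epimorfismo regular $g$: da invertibilidade do morfismo de comparação após mudança de base para o par núcleo $R[g]$ deve deduzir-se a invertibilidade do próprio $\lambda$. Trata-se de um fenómeno do tipo Barr--Kock, válido em qualquer categoria regular, que pretendo estabelecer explorando que $g$ é o co-igualizador de $g_1,g_2$, que os epimorfismos regulares são estáveis para produtos fibrados, e a factorização (epimorfismo regular)--(monomorfismo) da Proposição~\ref{factor}, a qual reduz a invertibilidade de $\lambda$ a mostrar que este é simultaneamente um monomorfismo e um epimorfismo regular. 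Articular com rigor esta descida --- tornando precisa a interacção entre o co-igualizador $g$, o resultado do caso cindido e o sistema de factorização --- é o principal obstáculo; tudo o resto se reduz à estabilidade de produtos fibrados e de epimorfismos regulares para mudança de base.
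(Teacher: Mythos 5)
A sua proposta está correcta e segue essencialmente o mesmo caminho da demonstração do texto: passar aos pares núcleos das colunas verticais, de modo que estas se tornem epimorfismos cindidos (cindidos pela diagonal), aplicar aí a Definição~\ref{def_protomodular} para concluir que o quadrado direito ao nível dos pares núcleos é um produto fibrado, e descer depois ao longo do epimorfismo regular $g$. O «principal obstáculo» que identifica --- a descida dessa conclusão para \fbox{2} --- é exactamente o Teorema de Barr--Kock (Teorema~\ref{bk}), que o texto invoca directamente, pelo que nada mais há a estabelecer.
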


\begin{proof}
Prova-se, de modo directo, que no diagrama abaixo se o quadrado do lado direito for um produto fibrado,
\[
\begin{tikzpicture}
\matrix (m) [matrix of math nodes, column sep=1.4cm, row sep=1.4cm, text height=1.5ex, text depth=0.25ex]{R[f] & \cdot & \cdot\\ R[g]&\cdot&\cdot\\};
\path[ar]
(m-1-1) edge (m-2-1) 
(m-1-2) edge (m-2-2)
(m-1-3) edge (m-2-3)
(m-1-2) edge node[auto]{$f$} (m-1-3)
(m-2-2) edge node[auto,swap]{$g$} (m-2-3)
([yshift=.5mm]m-1-1.east) edge  ([yshift=.5mm]m-1-2.west)
([yshift=-.5mm]m-1-1.east) edge  ([yshift=-.5mm]m-1-2.west)
([yshift=.5mm]m-2-1.east) edge ([yshift=.5mm]m-2-2.west)
([yshift=-.5mm]m-2-1.east) edge ([yshift=-.5mm]m-2-2.west);
\end{tikzpicture}
\]
então os quadrados do lado esquerdo também o serão.

Como consequência deste facto no diagrama estendido abaixo
\[
\begin{tikzpicture}
\matrix (m) [matrix of math nodes, column sep=.4cm, row sep=.4cm, text height=1.5ex, text depth=0.25ex]
{R[f] & & R[g] && R[h]\\ 
& \node[draw]{3}; && \node[draw]{4}; & \\
\cdot&&\cdot&&\cdot\\
&\node[draw]{1};&&\node[draw]{2};&\\
\cdot&&\cdot&&\cdot\\};
\path[ar]
(m-1-1) edge (m-1-3)
(m-1-3) edge (m-1-5)
(m-3-1) edge (m-3-3)
(m-3-3) edge (m-3-5)
([xshift=.5mm]m-1-1.south) edge  ([xshift=.5mm]m-3-1.north)
([xshift=-.5mm]m-1-1.south) edge  ([xshift=-.5mm]m-3-1.north)
([xshift=.5mm]m-1-3.south) edge ([xshift=.5mm]m-3-3.north)
([xshift=.5mm]m-1-5.south) edge ([xshift=.5mm]m-3-5.north)
([xshift=-.5mm]m-1-3.south) edge ([xshift=-.5mm]m-3-3.north)
([xshift=-.5mm]m-1-5.south) edge ([xshift=-.5mm]m-3-5.north)

(m-3-1) edge (m-3-3)
(m-3-3) edge (m-3-5)
(m-5-1) edge (m-5-3)
(m-5-3) edge (m-5-5)
(m-3-1) edge node[auto,swap]{$f$} (m-5-1)
(m-3-3) edge node[auto,swap]{$g$} (m-5-3)
(m-3-5) edge node[auto,swap]{$h$} (m-5-5);
\end{tikzpicture}
\]
os quatro quadrados $\fbox{3}$ e $\fbox{3}\fbox{4}$ são produtos fibrados (pois $\fbox{1}$ e $\fbox{1}\fbox{2}$ o são), logo, como a diagonal é uma inversa à direita da projecção de uma relação de equivalência, aplica-se a definição de protomodular para concluir que $\fbox{4}$ é um produto fibrado. A aplicação do teorema de Barr-Kock (Teorema \ref{bk}) mostra que $\fbox{2}$ é um produto fibrado.
\end{proof}

Termina-se com a demonstração do Lema dos Cinco e outros lemas de homologia numa categoria homológica.
\begin{proof}{(do Lema dos Cinco)}\\
No diagrama seguinte:
\[
\begin{tikzpicture}
\matrix (m) [matrix of math nodes, column sep=1.2cm, row sep=1.2cm, text height=1.5ex, text depth=0.25ex]
{K[f] & X & Y\\
K[f'] & X' & Y'\\
};
\path[ar]
(m-2-2) edge node[auto,swap]{$f'$} (m-2-3)
(m-1-2) edge node[auto]{$b$} (m-2-2);
\path[nar]
(m-1-1) edge node[auto]{$\ker f$} (m-1-2)
(m-2-1) edge node[auto,swap]{$\ker f'$} (m-2-2);
\path[rar]
(m-1-2) edge node[auto]{$f$} (m-1-3);
\path[ar]
(m-1-1) edge node[auto]{$a$} (m-2-1)
(m-1-3) edge node[auto]{$c$} (m-2-3);
\end{tikzpicture}
\]
com $f$ um epimorfismo regular, e $a$ e $c$ isomorfismos, é necessário mostrar que $b$ é um isomorfismo. Em primeiro lugar estendamos o diagrama:

\[
\begin{tikzpicture}
\matrix (m) [matrix of math nodes, column sep=.6cm, row sep=.6cm, text height=1.5ex, text depth=0.25ex] {&&K[f]&&0\\
&&&\node[draw]{1};&\\
K[f]&&X&&Y\\
&\node[draw]{3};&&\node[draw]{2};&\\
K[f']&&X'&&Y'.\\};

\path[ar]
(m-5-3) edge node[auto,swap]{$f'$} (m-5-5)
(m-3-3) edge node[auto,swap]{$b$} (m-5-3);
\path[nar]
(m-3-1) edge node[auto]{$\ker f$} (m-3-3)
(m-5-1) edge node[auto,swap]{$\ker f'$} (m-5-3);
\path[rar]
(m-3-3) edge node[auto]{$f$} (m-3-5);
\path[ar]
(m-3-1) edge node[auto,swap]{$a$} (m-5-1)
(m-3-5) edge node[auto]{$c$} (m-5-5);

\path[ar]
(m-1-3) edge (m-1-5)
(m-1-5) edge (m-3-5);
\path[nar]
(m-1-3) edge node[auto,swap] {$\ker f$} (m-3-3);
\end{tikzpicture}
\]
Ora, $\fbox{1}$ é um produto fibrado, e como $a$ e $c$ são isomorfismos, {\tiny ${\renewcommand{\arraystretch}{.04}
\renewcommand{\tabcolsep}{0.2cm}
\begin{array}{c}\fbox{1} \\  \fbox{2} \end{array}}$}\normalsize também é um produto fibrado. A proposição \ref{propo} diz que $\fbox{2}$ é um produto fibrado, logo $b$ é um isomorfismo.
\end{proof}

Como evidência da utilidade da noção de categoria homológica, tem-se o terceiro teorema de isomorfismo que é válido para categorias homológicas.

\begin{defc}
Numa categoria homológica, um subobjecto, $X\subseteq Y,$ diz-se próprio se for um núcleo que é designado $X\unlhd Y.$
\end{defc}

\begin{tma}
Seja \C\ uma categoria homológica, e sejam $H\unlhd G$ e $K\unlhd G$ subobjectos próprios de $G$, e $H\subseteq K$ um subobjecto de $K.$ Nestas condições vigoram os resultados seguintes: 
\begin{enumerate}
\item $H$ é um subobjecto próprio de K; 
\item $K/H \unlhd G/H $; 
\item $(G/H)/(K/H) \cong G/K$.
\end{enumerate}
\end{tma}

\begin{proof} 
Sejam $h:H\to G$ o morfismo $$H\mathrel{\mathop{\rightarrow}^i} K \mathrel{\mathop{\to}^k} G.$$ Considere-se o diagrama seguinte:
\[
\begin{tikzpicture}[description/.style={fill=white,inner sep=2pt}]
\matrix (m) [matrix of math nodes, row sep=4em,
column sep=3.4em, text height=1.5ex, text depth=0.25ex]
{ H & G & G/H \\
& G/K & \\ };
\path[ar]
(m-1-1) edge node[auto,swap]{$0$} (m-2-2);
\path[nar]
(m-1-1) edge node[auto]{$h$} (m-1-2);
\path[rar]
(m-1-2) edge node[auto,swap]{$q$} (m-1-3)
(m-1-2) edge node[auto]{$p$} (m-2-2)
(m-1-3) edge[dashed] node[auto]{$\phi$} (m-2-2);
\end{tikzpicture}
\]

Como $H\subseteq K$, $p\circ h$ é o morfismo zero e portanto existe um único morfismo $\phi: G/H \to G/K$ é o único morfismo que torna o diagrama comutativo. 

Ora como, $$\phi\circ q \circ k \circ i = \phi\circ q \circ h = 0$$ e $i:H\to K$ é um monomorfismo, obtém-se uma factorização através dos núcleos:
\[
\begin{tikzpicture}
\matrix(a) [matrix of math nodes,row sep=4em,
column sep=4em, nodes in empty cells] { 
H & K & \ker \phi\\
H & G & G/H \\
0 & G/K & G/K\\};
\path[ar,dashed] (a-1-2) edge node[auto]{$j$} (a-1-3);
\path (a-1-1) edge[double] (a-2-1);
\path (a-3-2) edge[double] (a-3-3);

\path[nar] (a-1-1) edge node[auto]{$i$} (a-1-2);
\path[nar] (a-2-1) edge node[auto]{$h$} (a-2-2);
\path[nar] (a-3-1) edge (a-3-2);
\path[nar] (a-1-2) edge node[auto]{$k$} (a-2-2);
\path[nar] (a-1-3) edge  (a-2-3);

\path[rar] (a-2-2) edge node[auto]{$q$} (a-2-3);
\path[rar] (a-2-1) edge (a-3-1);
\path[rar] (a-2-2) edge node[auto]{$p$} (a-3-2);
\path[rar] (a-2-3) edge node[auto]{$\phi$} (a-3-3);
\end{tikzpicture}
\]

Como as colunas são exactas e as últimas linhas também o são, um caso especial do lema dos nove, anunciado e demonstrado a seguir, justifica o resultado.
\end{proof}

\begin{lem}{Caso particular do Lema dos Nove}

Num diagrama comutativo $3\times 3,$ se as últimas duas linhas e todas as colunas são exactas então a primeira linha também é exacta.

\[
\begin{tikzpicture}
\matrix (a) [matrix of math nodes,row sep=5em,
column sep=5em, nodes in empty cells]
{ A&B&C \\ A'&B'&C' \\ A''&B''&C''\\ };
\path[ar] (a-1-1) edge node[auto]{$f$} (a-1-2);
\path[ar] (a-1-2) edge node[auto]{$g$}(a-1-3);
\path[nar] (a-2-1) edge node[auto]{$f'$} (a-2-2);
\path[rar] (a-2-2) edge node[auto]{$g'$}(a-2-3);
\path[nar] (a-3-1) edge node[auto,swap]{$f''$} (a-3-2);
\path[rar] (a-3-2) edge node[auto,swap]{$g''$}(a-3-3);

\path[nar] (a-1-1) edge node[auto,swap]{$a$} (a-2-1);
\path[rar] (a-2-1) edge node[auto,swap]{$a'$}(a-3-1);

\path[nar] (a-1-2) edge node[auto]{$b$} (a-2-2);
\path[rar] (a-2-2) edge node[auto]{$b'$}(a-3-2);

\path[nar] (a-1-3) edge node[auto]{$c$} (a-2-3);
\path[rar] (a-2-3) edge node[auto]{$c'$}(a-3-3);
\end{tikzpicture}
\]
\end{lem}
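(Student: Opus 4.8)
The plan is to read the top row as the family of kernels of the vertical comparison from the second to the third row, and to show that this family is again a short exact sequence. Since the columns are exact we have $A = \ker a'$, $B = \ker b'$, $C = \ker c'$ with $a', b', c'$ regular epimorphisms, and since the two bottom rows are exact we have $f' = \ker g'$, $f'' = \ker g''$ with $g', g''$ regular epimorphisms. Proving the top row exact means establishing three things: that $f$ is a monomorphism, that $f = \ker g$ (exactness at $B$), and that $g$ is a regular epimorphism. The first is immediate: from $b\circ f = f'\circ a$ and the fact that $f'$ and $a$ are kernels, hence monomorphisms, the composite $b\circ f$ is a monomorphism, so $f$ is one too. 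Likewise $g\circ f = 0$, because $c$ is a monomorphism and $c\circ g\circ f = g'\circ b\circ f = g'\circ f'\circ a = 0$.

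First I would prove exactness at $B$, that is, that $(A,f)=\ker g$, by a universal-property chase that uses only the left-exactness already packaged into the kernels and monomorphisms. Given $t\colon T\to B$ with $g\circ t = 0$, the equality $g'\circ b\circ t = c\circ g\circ t = 0$ forces $b\circ t$ to factor uniquely through $f' = \ker g'$, say $b\circ t = f'\circ s$. Applying $b'$ and using $b'\circ f' = f''\circ a'$ together with $b'\circ b = 0$ gives $f''\circ a'\circ s = 0$; since $f''$ is a monomorphism, $a'\circ s = 0$, so $s$ factors uniquely through $a = \ker a'$, say $s = a\circ r$. Then $b\circ f\circ r = f'\circ a\circ r = f'\circ s = b\circ t$, and cancelling the monomorphism $b$ yields $f\circ r = t$; uniqueness of $r$ follows from $f$ being a monomorphism. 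Thus $f$ satisfies exactly the universal property of $\ker g$.

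The heart of the argument, and the step I expect to be the main obstacle, is showing that $g$ is a regular epimorphism; this is the only place where protomodularity and the right-exactness of the ambient category are genuinely required, for the square with horizontal maps $g, g'$ and vertical maps $b, c$ is \emph{not} a pullback, so $g$ cannot be obtained merely by restricting $g'$. The plan is to form the pullback $P = B'\times_{C'} C$ of $g'$ along $c\colon C = \ker c'\rightarrowtail C'$. Because regular epimorphisms are stable under pullback, the projection $\pi\colon P\to C$ is a regular epimorphism with $\ker\pi\cong\ker g' = A'$, and the factorization $c\circ g = g'\circ b$ induces a monomorphism $\beta\colon B\to P$ with $\pi\circ\beta = g$; under $\beta$ the subobjects $\ker\pi\cong A'$ and $B=\ker b'$ of $P$ meet in $A = \ker a'$. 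Since in a homological category a regular epimorphism is the cokernel of its kernel, $C\cong P/\ker\pi$ and $g$ is the composite $B\rightarrowtail P\twoheadrightarrow P/\ker\pi$. Hence $g$ is a regular epimorphism precisely when the subobjects $B$ and $\ker\pi$ jointly cover $P$, i.e.\ exactly when the identity $\ker b'\vee\ker g' = (g')^{-1}(\ker c')$ holds among subobjects of $B'$; this identity is the categorical substitute for the element-wise difference $x'-f'(w)$ available in the additive case, and it is here that protomodularity is indispensable.

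To settle this last point I would argue as in Proposition \ref{propo}: pass to the induced diagram on kernel pairs, where the columns acquire sections (the diagonals of the equivalence relations split their projections), invoke the protomodular cancellation of Definition \ref{def_protomodular} to recognise the relevant square as a pullback, and then apply the Barr--Kock theorem (Theorem \ref{bk}) to transport the pullback property from the kernel pairs back down to the objects; the factorization through images (Proposition \ref{factorim}) identifies $\im(\pi\circ\beta)$ with all of $C$, making $g$ a regular epimorphism. With $f = \ker g$ and $g$ a regular epimorphism established, the top row is short exact, which is the assertion of the lemma; in the intended application it yields precisely the isomorphism $K/H\cong\ker\phi$.
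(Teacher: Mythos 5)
Your treatment of the kernel half is correct and complete: the observations that $f$ and $g\circ f=0$ behave as claimed, and the universal-property chase showing $(A,f)=\ker g$, are exactly the content the paper compresses into ``the square $(f,a,b,f')$ is a pullback, hence $A=b^{-1}(\ker g')=\ker(c\circ g)=\ker g$''. Your reduction of the remaining claim --- that $g$ is a regular epimorphism --- to the identity $\ker b'\vee\ker g'=(g')^{-1}(\ker c')$ in the subobjects of $B'$, i.e.\ to $\pi(\beta(B))=C$ for the pullback $P=B'\times_{C'}C$, is also the right move.

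The gap is in the last paragraph, where that identity is supposed to be established: it names the tools of Proposição \ref{propo} (kernel pairs, the cancellation of Definição \ref{def_protomodular}, Teorema \ref{bk}, Proposição \ref{factorim}) without saying to which squares they are applied, and --- this is the decisive symptom --- it nowhere uses the two hypotheses without which the claim is simply false: that $a'\colon A'\to A''$ is a regular epimorphism (exactness of the first column at $A'$) and that $f''=\ker g''$ (exactness of the bottom row at $B''$). An argument that never invokes these cannot prove that $g$ is a regular epimorphism. A correct completion runs as follows. Since $P=(g')^{-1}(\ker c')=\ker(c'\circ g')=\ker(g''\circ b')=(b')^{-1}(A'')$ --- the last equality is where $f''=\ker g''$ enters --- the map $b'$ restricts to a regular epimorphism $\rho\colon P\twoheadrightarrow A''$ (pullback of $b'$ along $f''$) with kernel $B=\ker b'$. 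The subobject $A'=\ker\pi$ of $P$ satisfies $f''\circ(\rho|_{A'})=b'\circ f'=f''\circ a'$, so $\rho|_{A'}=a'$, whence $\rho(A')=A''$ because $a'$ is a regular epimorphism. In a homological category $\rho^{-1}(\rho(A'))=A'\vee\ker\rho$, so $A'\vee B=\rho^{-1}(A'')=P$; applying $\pi$ and using that direct images preserve joins together with $\pi(A')=0$ gives $\pi(B)=\pi(A'\vee B)=\pi(P)=C$, so the regular image of $g$ is all of $C$ and $g$ is a regular epimorphism. (To be fair, the paper itself only asserts this step --- ``verifica-se ainda que $g$ é um epimorfismo regular'' --- so you have correctly located the delicate point; you just have not yet proved it.)
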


\begin{proof}
Como as primeiras colunas são exactas e $f''$ ser um monomorfismo implica que o primeiro quadrado $A\cong A'\times_{B'}B$ é um produto fibrado. Daí vem que $f=\ker g$. Verifica-se ainda que $g$ é um epimorfismo regular, razão pela qual a primeira linha é exacta.
\end{proof}

\section{Categorias semi-abelianas}
A definição de categoria semi-abeliana como proposta em \cite{Janelidze} é uma combinação das propriedades que estudámos até agora. Especificamente, uma categoria pontuada \C\ diz-se semi-abeliana se for cocompleta, exacta e protomodular. Como exemplo imediato têm-se as categorias abelianas. 

\begin{ex}
Toda a categoria abeliana é semi-abeliana.
\end{ex}

\begin{proof}
Uma categoria abeliana é, por definição, uma categoria pontuada, finitamente completa e cocompleta, com núcleos e conúcleos, e onde todo o monomorfismo é um núcleo e todo o epimorfismo é um conúcleo.

Uma categoria abeliana satisfaz o lema dos cinco que é mais forte do que ser protomodular. Extrai-se directamente do facto de ser semi-abeliana que é regular: existe uma factorização de cada morfismo $f=\im f\circ \textrm{coim} f,$ única a menos de isomorfismo. Por fim, a categoria é exacta, pois cada relação $R$ é isomorfa a $R[q]$, a relação efectiva gerada pelo seu co-igualizador $q$.
\end{proof}

As categorias abelianas têm a propriedade de que as suas opostas também são abelianas. No caso das semi-abelianas já não acontece, porém tem-se:

\begin{pro}
Uma categoria $\C$ semi-abeliana e cuja categoria oposta $\C^\mathrm{op}$ seja semi-abeliana é abeliana.
\end{pro}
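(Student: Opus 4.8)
The plan is to show that the two one-sided hypotheses force finite coproducts and products to coincide, making $\C$ additive, and then to upgrade additivity to abelianness using exactness. First I would record the structure available on both sides. Since $\C$ is semi-abelian it is pointed, finitely complete, cocomplete, exact and protomodular, hence homological; in particular it has kernels and every regular epimorphism is the cokernel of its kernel. Because $\C^{\mathrm{op}}$ is semi-abelian, the dual statements hold in $\C$: it has cokernels, and every regular monomorphism is the kernel of its cokernel. The overall strategy is to establish additivity and then observe that an additive exact category is abelian.

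The crux is additivity, obtained from the comparison morphism. For objects $A,B$ consider the canonical $r_{A,B}\colon A+B\to A\times B$ determined by $r\,i_A=(1_A,0)$ and $r\,i_B=(0,1_B)$. In any homological category the two injections $(1_A,0)$ and $(0,1_B)$ are jointly strongly epimorphic (homological categories are unital), so $r_{A,B}$ is a strong epimorphism, which in a regular category is a regular epimorphism. Applying the very same fact inside $\C^{\mathrm{op}}$—where the roles of product and coproduct are exchanged—shows that $r_{A,B}$, read back in $\C$, is a regular monomorphism, in particular a monomorphism. Now a morphism that is simultaneously a regular epimorphism and a monomorphism is an isomorphism: if $e$ is the coequalizer of $u,v$ and is also monic, then $eu=ev$ forces $u=v$, and the coequalizer of a pair of equal arrows is the identity. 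Hence each $r_{A,B}$ is an isomorphism, finite coproducts and products coincide, and $\C$ is semi-additive. The Mal'cev operation supplied by protomodularity then yields additive inverses on each hom-monoid—in biproduct coordinates the ternary operation is $(x,y,z)\mapsto x-y+z$—so $\C$ is additive.

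It then remains to check that in the additive exact category $\C$ every monomorphism is a kernel and every epimorphism is a cokernel. Given a monomorphism $m\colon A\to B$, I would form the internal equivalence relation on $B$ carried by $A\times B$ with $d_1\colon(a,b)\mapsto a+b$ and $d_2$ the projection; effectiveness makes it the kernel pair of its coequalizer $q$, and restricting along $d_2=0$ identifies $m$ with $\ker q$, so $m$ is normal. Dually, using the co-exactness coming from $\C^{\mathrm{op}}$, every epimorphism is a cokernel. With kernels, cokernels, finite limits and finite colimits all present, and every monomorphism a kernel and every epimorphism a cokernel, this is precisely the definition of an abelian category. The main obstacle is the middle paragraph: recognizing that the comparison $A+B\to A\times B$ is a regular epimorphism (the unital property of homological categories) and that the symmetric hypothesis makes the same arrow a monomorphism; once both halves are secured the isomorphism is immediate and the remaining verifications are routine.
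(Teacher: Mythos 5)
The paper does not actually prove this proposition: it simply refers the reader to \cite{Janelidze}. Your proposal therefore supplies an argument where the text defers to the literature, and the argument you give is essentially the standard one and is correct. The two key observations are sound: a pointed protomodular regular category is (strongly) unital, so the canonical comparison $r_{A,B}\colon A+B\to A\times B$ is a strong, hence regular, epimorphism; and since $r_{A,B}$ is characterised by the self-dual ``identity matrix'' conditions $p_A\,r\,i_A=1$, $p_B\,r\,i_A=0$, etc., the same fact read in $\C^{\mathrm{op}}$ makes it a monomorphism, whence an isomorphism, and $\C$ has biproducts. Two small points deserve tightening. First, a pointed protomodular category is a Mal'cev category but carries no internal ternary Mal'cev operation, so ``the Mal'cev operation supplied by protomodularity'' is not literally available; the clean way to produce additive inverses is to apply the split short five lemma to the endomorphism $(x,y)\mapsto(x,x+y)$ of the point $p_1\colon X\times X\rightleftarrows X$, comparing the sections $\langle 1,0\rangle$ and $\langle 1,1\rangle$ (equivalently, to invoke the fact that an internal monoid in a Mal'cev category is an internal group). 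Second, your final paragraph is longer than it needs to be: once $\C$ is additive, Tierney's characterisation (a category is abelian if and only if it is additive and Barr-exact) finishes the proof using only the exactness of $\C$ itself --- your direct verification that every monomorphism is normal is precisely the proof of that characterisation, and the dual hypothesis is no longer needed at that stage. Neither point is a genuine gap; the proof goes through.
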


\begin{proof}
Remetemos o leitor para \cite{Janelidze}.
\end{proof}


Por fim, consideramos outras aplicações de categorias semi-abelianas. Na tese \cite{vanLinden}, é notável que as categorias semi-abelianas formam um ambiente propício para o estudo de homologia e homotopia. O artigo \cite{Everaert} revela que se pode estudar a teoria dos comutadores nas categorias semi-abelianas.

\chapter{Álgebras topológicas}
Este capítulo trata os espaços topológicos munidos de uma estrutura algébrica ge\-ne\-ra\-li\-zan\-do propriedades clássicas de grupos topológicos para álgebras topológicas. Com esse fim, recordam-se brevemente os resultados conhecidos dos grupos topológicos nos contextos da teoria das categorias e da álgebra universal. De seguida, usa-se um resultado de Bourn e Janelidze \cite{Bourn4} para provar as generalizações dos resultados de $\GrpTop$ como em \cite{Borceux3}. 

\section{Grupos topológicos, $\GrpTop$}

Grupos topológicos são simultaneamente espaços topológicos e grupos com uma condição de compatibilidade: as operações de grupo são contínuas. No entanto, podem definir-se da maneira seguinte.

\begin{defc}
Um grupo topológico é um par $(X,\frak{T})$ onde $X$ é um grupo cujo conjunto subjacente é munido de uma topologia tal que $\phi:X\times X\to X;\ (x,y)\mapsto xy^{-1}$ é uma aplicação contínua (em $X\times X$ considera-se a topologia produto). 

Um morfismo de grupos topológicos é um homomorfismo de grupos que seja contínuo como aplicação entre os seus espaços.
\end{defc}


As topologias de grupos topológicos são muito próprias, como mostra o lema seguinte.

\begin{lem}
Todo o grupo topológico é um espaço homogéneo.
\end{lem}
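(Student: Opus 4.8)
O plano é exibir, para cada par de pontos, uma translação que seja um homeomorfismo. Recordo que um espaço topológico $X$ se diz \emph{homogéneo} se, para quaisquer $a,b\in X$, existir um homeomorfismo $f\colon X\to X$ com $f(a)=b$. Seja $e$ o elemento neutro do grupo subjacente a $X$.

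Primeiro deduzo da hipótese--a continuidade de $\phi\colon X\times X\to X$, $(x,y)\mapsto x\inv{y}$--que tanto a inversão $\iota\colon x\mapsto \inv{x}$ como a multiplicação $m\colon (x,y)\mapsto xy$ são contínuas. Para a inversão basta notar que $\iota(x)=\phi(e,x)$, ou seja, que $\iota$ é a composição da aplicação contínua $x\mapsto(e,x)$ com $\phi$. Para a multiplicação observo que $xy=x\,\inv{(\inv{y})}=\phi(x,\iota(y))$, logo $m=\phi\circ(\id\times\iota)$ é contínua, por ser composição de aplicações contínuas (usando a continuidade de $\iota$ já estabelecida e que o produto de aplicações contínuas é contínuo para a topologia produto).

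Fixado $c\in X$, considero então a translação à esquerda $L_c\colon X\to X$, $x\mapsto cx$. Esta é contínua, por ser a composição de $x\mapsto(c,x)$ com $m$; é bijectiva, tendo por inversa $L_{\inv{c}}$; e a sua inversa é contínua pelo mesmo argumento aplicado a $\inv{c}$. Portanto $L_c$ é um homeomorfismo de $X$. Finalmente, dados $a,b\in X$, escolho $c=b\inv{a}$, obtendo $L_c(a)=b\inv{a}a=b$, o que mostra que $X$ é homogéneo.

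O passo que exige mais cuidado é o primeiro: recuperar a continuidade das operações individuais $\iota$ e $m$ a partir da única condição imposta sobre $\phi$. Uma vez feito isto, a continuidade das translações e a identificação da respectiva inversa são imediatas, e a escolha $c=b\inv{a}$ resolve o problema.
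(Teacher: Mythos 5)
A sua demonstração está correcta e segue essencialmente o mesmo caminho da dissertação: exibir a translação à esquerda $t\mapsto b\inv{a}t$ como homeomorfismo que leva $a$ em $b$. A única diferença é que explicita a dedução da continuidade da inversão e da multiplicação a partir da continuidade de $\phi$, passo que o texto dá por adquirido.
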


\begin{proof}
Seja $X$ um grupo topológico, e sejam  $a,b\in X.$ Mostre-se que a aplicação $X\to X;\ t\mapsto b\inv{a} t$, que a $a$ atribui $b$, é um homeomorfismo. É uma aplicação contínua, pois a multiplicação é uma operação contínua, e $X \to X;\ t \mapsto ab^{-1}t$, a sua inversa, é igualmente contínua.
\end{proof}

\begin{cor}
Seja $X$ um grupo topológico. Se uma propriedade topológica $\mathfrak{P}$ é válida numa vizinhança aberta do elemento neutro de  $X$, então é válida em $X$.
\end{cor}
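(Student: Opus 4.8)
O plano é usar a homogeneidade estabelecida no lema anterior para transportar a validade de $\mathfrak{P}$ de uma vizinhança do elemento neutro para uma vizinhança de qualquer ponto de $X$. Seja $U$ uma vizinhança aberta do elemento neutro $e$ na qual $\mathfrak{P}$ se verifica, e fixe-se um ponto arbitrário $x\in X$. Primeiramente, aplicaria o lema com $a=e$ e $b=x$ para obter o homeomorfismo $\lambda_x:X\to X;\ t\mapsto x\inv{e}\,t = xt$, que leva $e$ em $x$.

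Em seguida, como $\lambda_x$ é um homeomorfismo e $U$ é aberto, a sua imagem $\lambda_x(U)=xU$ é uma vizinhança aberta de $x$. Por $\mathfrak{P}$ ser uma propriedade topológica (invariante por homeomorfismos) e valer em $U$, conclui-se que $\mathfrak{P}$ vale igualmente em $xU$. Como $x$ foi escolhido arbitrariamente, os transladados $xU$ constituem uma cobertura aberta de $X$ em que cada membro satisfaz $\mathfrak{P}$, pelo que $\mathfrak{P}$ se verifica numa vizinhança de cada ponto de $X$.

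O passo final --- e o ponto mais delicado --- consiste em passar da validade de $\mathfrak{P}$ em cada membro da cobertura para a sua validade em $X$. Isto exige que $\mathfrak{P}$ seja uma propriedade de carácter local, isto é, que valha globalmente sempre que valha numa vizinhança de cada ponto (como sucede com a compacidade local, a conexidade local, a regularidade, etc.); sob esta hipótese, implícita no enunciado, o resultado segue de imediato. A única obstrução real é, portanto, delimitar a classe de propriedades $\mathfrak{P}$ a que o corolário se aplica; a transferência por translações é puramente formal e decorre directamente do lema de homogeneidade.
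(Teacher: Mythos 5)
A sua proposta está correcta e segue essencialmente o mesmo caminho da demonstração do texto: transporta-se a vizinhança aberta $U$ do elemento neutro para uma vizinhança aberta $xU$ de cada ponto $x$ através do homeomorfismo $t\mapsto xt$ dado pelo lema de homogeneidade, usando a invariância de $\mathfrak{P}$ por homeomorfismos. A única diferença é que você torna explícito o passo final --- a necessidade de $\mathfrak{P}$ ser de carácter local para se passar da validade numa vizinhança de cada ponto à validade em $X$ --- que a demonstração do texto deixa implícito; essa observação é pertinente e não constitui uma lacuna, mas sim uma leitura mais cuidadosa do enunciado.
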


\begin{proof}
Sabe-se que os homeomorfismos entre espaços topológicos preservam as suas propriedades topológicas. Seja $f:X\to X$ um homeomorfismo de grupos topológicos que a $0$ atribui $x\in X$. Se $\mathfrak{P}$ for válida numa vizinhança aberta $U$ de $0,$ então, $f(U)$ é uma vizinhança aberta de $x$ onde $\mathfrak{P}$ é ainda válida.
\end{proof}

\begin{pro}
Os grupos topológicos e homomorfismos de grupo contínuos entre grupos topológicos constituem uma categoria finitamente completa que se designa $\GrpTop.$
\end{pro}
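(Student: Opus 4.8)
*Os grupos topológicos e homomorfismos de grupo contínuos entre grupos topológicos constituem uma categoria finitamente completa que se designa $\GrpTop$.*

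So I need to prove that $\GrpTop$ is finitely complete (has all finite limits).

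Let me think about how to prove this.O plano é mostrar que $\GrpTop$ possui todos os limites finitos, o que equivale a exibir um objecto terminal e produtos fibrados (ou, alternativamente, um objecto terminal, produtos binários e igualizadores). Primeiro verificaria que o grupo trivial $\{0\}$, munido da única topologia possível, é o objecto terminal: para qualquer grupo topológico $X$, a aplicação constante $X\to\{0\}$ é simultaneamente um homomorfismo de grupos e contínua, e é claramente o único morfismo possível. A estratégia geral será sempre a mesma: tomar o limite correspondente em $\Grp$ (que sabemos existir, pois $\Grp$ é finitamente completa), equipá-lo com a topologia induzida/inicial apropriada herdada de $\Top$, e depois verificar que as operações de grupo resultantes são contínuas e que o objecto assim construído satisfaz a propriedade universal em $\GrpTop$.

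Em seguida trataria os produtos binários. Dados grupos topológicos $X$ e $Y$, considero o grupo produto $X\times Y$ (com a operação componente a componente) munido da topologia produto. O ponto essencial é que $X\times Y$ é de facto um grupo topológico, isto é, que a aplicação $\phi:(X\times Y)\times(X\times Y)\to X\times Y$ dada por $((x,y),(x',y'))\mapsto (x(x')^{-1}, y(y')^{-1})$ é contínua; isto segue de a correspondente aplicação $\phi_X$ em $X$ e $\phi_Y$ em $Y$ serem contínuas por hipótese, juntamente com a propriedade universal da topologia produto. As projecções $p_1,p_2$ são homomorfismos contínuos, e dado qualquer par de morfismos $g:Z\to X$, $h:Z\to Y$ em $\GrpTop$, a aplicação $\langle g,h\rangle:Z\to X\times Y$ é o único homomorfismo que factoriza ambos e é contínua precisamente por $g$ e $h$ o serem, pela propriedade universal em $\Top$.

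Para os igualizadores, dado um par $f,g:X\rightrightarrows Y$ em $\GrpTop$, formo o igualizador $E=\{x\in X\mid f(x)=g(x)\}$ em $\Grp$, que é um subgrupo de $X$, e munimo-lo da topologia de subespaço herdada de $X$. Preciso de confirmar que $E$ é um grupo topológico: a restrição de $\phi_X$ a $E\times E$ tem imagem em $E$ (pois $E$ é subgrupo) e é contínua como restrição de uma aplicação contínua a um subespaço, logo $E$ herda a estrutura de grupo topológico. A inclusão $E\hookrightarrow X$ é um homomorfismo contínuo que iguala $f$ e $g$, e qualquer morfismo $k:Z\to X$ com $f\circ k=g\circ k$ factoriza-se unicamente por $E$ ao nível de $\Grp$, sendo a factorização contínua pela propriedade da topologia inicial do subespaço. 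Tendo objecto terminal, produtos binários e igualizadores, conclui-se que $\GrpTop$ é finitamente completa.

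O principal obstáculo — ainda que modesto — é em cada caso a verificação de que a topologia escolhida torna as operações de grupo contínuas, ou seja, de que a construção não sai da categoria $\GrpTop$. A subtileza reside em articular corretamente a topologia produto sobre $X\times X$ no domínio de $\phi$ com a topologia produto sobre o objecto construído, e em usar de forma limpa a definição compacta de grupo topológico via a única aplicação $\phi$ (em vez de verificar separadamente multiplicação e inversão). Uma vez estabelecido que cada limite em $\Grp$, com a topologia inicial adequada, permanece em $\GrpTop$, as propriedades universais transferem-se quase automaticamente porque o functor de esquecimento para $\Conj$ fatoriza por $\Grp$ e por $\Top$, ambos finitamente completos, e os limites comutam com esses esquecimentos.
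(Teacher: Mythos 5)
A tua proposta segue essencialmente o mesmo caminho que a demonstração do texto: construir o produto binário com a topologia produto e o igualizador como subgrupo topológico com a topologia de subespaço, verificando em cada caso que a operação $\phi(x,y)=xy^{-1}$ permanece contínua e que a propriedade universal se transfere de $\Grp$ e $\Top$. A única diferença é que explicitas também o objecto terminal (o grupo trivial), que o texto omite mas que é de facto necessário para a completude finita — um pequeno acréscimo correcto.
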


\begin{proof}

Construam-se os produtos binários e igualizadores explicitamente. 

Se $X$ e $Y$ são objectos de $\GrpTop,$ o produto de $X$ e $Y$ em $\Grp$ será o conjunto $\{(x,y) : x\in X, y\in Y\}$ munido do produto $(x,y)\cdot (x',y') = (x x',y y'),$ com as projecções canónicas. 

\[\begin{tikzpicture}
\matrix (m) [matrix of math nodes, column sep=1cm] {X & X\times Y & Y\\};
\path[ar]
(m-1-2) edge node[auto,swap]{$p_X$}(m-1-1)
(m-1-2) edge node[auto]{$p_Y$}(m-1-3);
\end{tikzpicture} \]

Verifica-se que a topologia produto $X\times Y$ torna-o num grupo topológico, pois o morfismo $$((x_1,x_2),(y_1,y_2)) \mapsto (x_1x_2^{-1}, y_1y_2^{-1})$$ é contínuo em cada componente. As projecções são contínuas, pois verifica-se que $p_X^{-1}(U) = U\times Y.$

O que é o igualizador de $f,g:X\to Y$? O igualizador \[
\begin{tikzpicture}
\matrix (m) [matrix of math nodes, column sep=1.5cm,text height=1.5ex, text depth=0.25ex] {I & X & Y,\\};
\path[ar]
(m-1-1) edge node[auto]{$i$}(m-1-2)
([yshift=.5mm]m-1-2.east) edge node[auto]{$f$}([yshift=.5mm]m-1-3.west)
([yshift=-.5mm]m-1-2.east) edge node[auto,swap]{$g$}([yshift=-.5mm]m-1-3.west);
\end{tikzpicture}
\]
será o subgrupo topológico de $X$ cujos elementos são os elementos $x$ tal que $f(x) = g(x).$ Averigua-se que é um subgrupo e a inclusão é um homomorfismo contínuo. 

\end{proof}

Para facilitar o uso de produtos fibrados em $\GrpTop,$ escreve-se o produto fibrado de dois morfimos em $\GrpTop$ de uma forma explícita. Sejam $X,$ $Y$ e $Z$ grupos topológicos com morfismos $X\mathop{\to}\limits^f Z \mathop{\leftarrow}\limits^g Y.$ O produto fibrado \[
\begin{tikzpicture}
\matrix (m) [matrix of math nodes, column sep=1.5cm, row sep=1.5cm] {X\times_Z Y & Y\\ X & Z.\\};
\path[ar]
(m-1-1) edge node[auto] {$p_2$} (m-1-2)
(m-1-1) edge node[auto,swap] {$p_1$} (m-2-1)
(m-2-1) edge node[auto,swap] {$f$} (m-2-2)
(m-1-2) edge node[auto] {$g$} (m-2-2);
\end{tikzpicture}
 \] tem como conjunto subjacente do produto fibrado é $$|X\times_Z Y| = \{(s,t)\in |X\times Y| : f(s)=g(t)\},$$ e a topologia é induzida pela topologia produto.

\begin{pro}
$\GrpTop$ é finitamente cocompleta.
\end{pro}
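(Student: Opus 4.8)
O plano é reduzir a cocompletude finita à existência de um objecto inicial, de co-igualizadores e de coprodutos binários, e construir cada um destes sobre o functor de esquecimento $V:\GrpTop\to\Grp$, equipando o colimite correspondente em $\Grp$ com uma topologia de grupo apropriada. Como $\Grp$ é uma variedade, possui todos estes colimites: o objecto inicial é o grupo trivial; o co-igualizador de $f,g:X\to Y$ é a projecção $q:Y\to Y/N$ sobre o quociente pelo subgrupo normal $N$ gerado por $\{f(x)g(x)^{-1}\}$; e o coproduto binário é o produto livre $X*Y$ com as injecções canónicas $\iota_X,\iota_Y$. Resta apenas topologizar cada um destes de modo coerente com as operações de grupo.

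O objecto inicial $\{e\}$ admite topologia única e é imediatamente inicial em $\GrpTop$. Para o co-igualizador, munimos $Y/N$ da topologia quociente, usando o facto clássico de que o quociente de um grupo topológico por um subgrupo normal é ainda um grupo topológico, com $q$ contínua e aberta. Como a topologia quociente é precisamente a topologia final para $q$ e é uma topologia de grupo, a propriedade universal em $\GrpTop$ resulta da propriedade universal em $\Grp$ conjugada com a caracterização da topologia final: um homomorfismo $u:Y/N\to M$ para um grupo topológico $M$ é contínuo se e só se $u\circ q$ o for.

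A parte difícil é o coproduto. O grupo subjacente é $X*Y$, mas a topologia final em $\Top$ para $\iota_X,\iota_Y$ pode não ser uma topologia de grupo; o plano é então tomar a \emph{mais fina} topologia de grupo que torna ambas as injecções contínuas. Para garantir que esta existe, estabeleço primeiro o lema: o supremo de uma família $\{\tau_\alpha\}$ de topologias de grupo sobre um mesmo grupo $G$ é ainda uma topologia de grupo. A demonstração é via o mergulho diagonal $G\to\prod_\alpha(G,\tau_\alpha)$, que é um homomorfismo cuja imagem é um subgrupo do produto (este um grupo topológico), sendo a topologia do supremo exactamente a topologia de subespaço, logo de grupo. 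Daqui, a colecção das topologias de grupo sobre $X*Y$ contidas na topologia final em $\Top$ é não vazia (contém a indiscreta) e fechada para supremos, tendo portanto um elemento máximo $\tau^*$; como coarsir o contradomínio preserva a continuidade, $\iota_X$ e $\iota_Y$ permanecem contínuas para $\tau^*$, que é assim a mais fina topologia de grupo com esta propriedade.

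Concluo verificando a propriedade universal do coproduto pela mesma caracterização de topologia final (um homomorfismo $u:(X*Y,\tau^*)\to M$ é contínuo se e só se $u\circ\iota_X$ e $u\circ\iota_Y$ o forem) e reunindo os três colimites: tendo objecto inicial, co-igualizadores e coprodutos binários, $\GrpTop$ é finitamente cocompleta. Observo que este argumento mostra, de facto, que $V:\GrpTop\to\Grp$ levanta todos os colimites (é um functor topológico), herdando $\GrpTop$ toda a cocompletude de $\Grp$. O obstáculo principal é precisamente o lema do supremo de topologias de grupo, que é o que legitima a \emph{topologia final de grupo} usada na construção do coproduto.
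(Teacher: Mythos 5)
A sua abordagem coincide no essencial com a do texto: obter os colimites finitos de $\GrpTop$ a partir dos de $\Grp$, munindo o co-igualizador da topologia quociente e o produto livre de uma topologia de grupo adequada (o texto limita-se a enunciar isto e remete os pormenores do coproduto para Borceux). A sua versão é bastante mais cuidadosa, em particular o lema do supremo de topologias de grupo via o mergulho diagonal em $\prod_\alpha(G,\tau_\alpha)$, que legitima a existência da topologia de grupo mais fina $\tau^*$ sobre $X*Y$ que torna $\iota_X$ e $\iota_Y$ contínuas.

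Há, porém, um passo cuja justificação, tal como está escrita, não funciona: a afirmação de que um homomorfismo $u:(X*Y,\tau^*)\to M$ é contínuo se e só se $u\circ\iota_X$ e $u\circ\iota_Y$ o forem \emph{não} decorre «da mesma caracterização de topologia final», porque $\tau^*$ é em geral estritamente mais grosseira do que a topologia final em $\Top$ --- é precisamente por essa razão que teve de a substituir. O argumento correcto exige um passo adicional: dado um homomorfismo de grupos $u:X*Y\to M$ com $u\circ\iota_X$ e $u\circ\iota_Y$ contínuos, a topologia inicial $u^{-1}(\tau_M)$ sobre $X*Y$ é uma topologia de grupo (porque $u$ é um homomorfismo e as operações de $M$ são contínuas, os abertos sub-básicos $u^{-1}(V)$ têm pré-imagens abertas pelas operações) e torna $\iota_X$ e $\iota_Y$ contínuas; logo $u^{-1}(\tau_M)\subseteq\tau^*$ pela maximalidade de $\tau^*$, e portanto $u$ é contínuo para $\tau^*$. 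Com este complemento, que é rotineiro mas indispensável, a sua demonstração fica completa e, de facto, mais auto-suficiente do que a do texto.
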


\begin{proof}
Um coproduto de grupos topológicos é o produto livre dos grupos munido da topologia gerada pela união disjunta das topologias. Para mais pormenores, remetemos o leitor para \cite{Borceux}.

O co-igualizador $Q$ \[
\begin{tikzpicture}
\matrix (m) [matrix of math nodes, column sep=1.5cm,text height=1.5ex, text depth=0.25ex] { X & Y & Q.\\};
\path[ar]
(m-1-2) edge node[auto]{$q$}(m-1-3)
([yshift=.5mm]m-1-1.east) edge node[auto]{$f$}([yshift=.5mm]m-1-2.west)
([yshift=-.5mm]m-1-1.east) edge node[auto,swap]{$g$}([yshift=-.5mm]m-1-2.west);
\end{tikzpicture}
\]
é dado pelo quociente $Y/_\sim$ onde $\sim$ é a menor relação de equivalência compatível com as operações $(*,^{-1},e)$ que satisfaz $f(x)\sim g(x)$ para $x\in X,$ e é munido da topologia quociente. Esta estrutura é naturalmente um grupo topológico.

\end{proof}

Para posteriormente compararmos com as técnicas mais sofisticadas (que se empregam para estudar álgebras topológicas), incluem-se aqui alguns resultados fundamentais da teoria dos grupos topológicos. Como o foco são os métodos, as de\-mons\-tra\-ções são completas. Esta matéria pode ser encontrada em várias fontes, veja-se, por exemplo, \cite{Higgins,Bourbaki}.

O objecto com um único elemento é o objecto zero, portanto, $\GrpTop$ é pontuada. Para economizar espaço, omitimos provar que $\GrpTop$ é regular e protomodular. Segue das demonstrações dos Teoremas \ref{l1} e \ref{l2}.

\begin{pro}
A categoria dos grupos topológicos é homológica.
\end{pro}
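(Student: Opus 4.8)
O plano é verificar as três propriedades que definem uma categoria homológica—ser pontuada, regular e protomodular. O grupo trivial, com a sua única topologia, é simultaneamente inicial e terminal, logo é um objecto zero e $\GrpTop$ é pontuada. Como já se estabeleceu que $\GrpTop$ é finitamente completa e finitamente cocompleta, possui co-igualizadores arbitrários, em particular de pares núcleo; para a regularidade resta apenas mostrar que os epimorfismos regulares são estáveis para produtos fibrados.

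O passo central é a regularidade. Primeiro identificaria os epimorfismos regulares: um co-igualizador $e:X\to Q$ é, pela construção dada atrás, a projecção sobre um quociente $X/N$ por um subgrupo normal $N$ munido da topologia quociente, e tal projecção é aberta—para $U\subseteq X$ aberto, $e^{-1}(e(U)) = UN = \bigcup_{n\in N} Un$ é aberto, donde $e(U)$ é aberto. Assim, os epimorfismos regulares de $\GrpTop$ são exactamente as sobrejecções contínuas abertas (reciprocamente, uma tal sobrejecção exibe o contradomínio com a topologia quociente pelo núcleo, sendo o co-igualizador do seu par núcleo). Usando que o espaço subjacente de um produto fibrado em $\GrpTop$ é o produto fibrado dos espaços subjacentes em $\Top$, e que em $\Top$ a projecção obtida por produto fibrado de uma aplicação aberta é ainda aberta (e sobrejectiva se a original o for), concluo que o produto fibrado de um epimorfismo regular é ainda uma sobrejecção contínua aberta, logo um epimorfismo regular.

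Para a protomodularidade, usaria o lema cindido dos cinco recém-demonstrado, aplicado através do functor de esquecimento $U:\GrpTop\to\Grp$, que cria limites finitos (os produtos, igualizadores e produtos fibrados de $\GrpTop$ têm como objecto subjacente os de $\Grp$), e do facto de $\Grp$ ser protomodular, pois as álgebras com operação de grupo o são. Dado o diagrama do lema em $\GrpTop$, com $f$ e $f'$ epimorfismos cindidos e $a,c$ isomorfismos, aplicar $U$ devolve o mesmo diagrama em $\Grp$ e a protomodularidade de $\Grp$ dá que $b$ é um homomorfismo bijectivo, isto é, uma bijecção contínua. Falta verificar que $b^{-1}$ é contínua: como $f$ é um epimorfismo cindido, a secção contínua cinde a extensão, dando um homeomorfismo $X\cong K[f]\times Y$ (e analogamente $X'\cong K[f']\times Y'$); sob estas identificações $b$ corresponde a $a\times c$, que é um homeomorfismo por $a$ e $c$ o serem. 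Logo $b$ é um isomorfismo em $\GrpTop$.

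Reunindo as três propriedades, $\GrpTop$ é homológica. Prevejo que o principal obstáculo seja a regularidade—precisamente a propriedade que falha em $\Top$ (veja-se o contra-exemplo anterior)—cuja chave é reconhecer que em $\GrpTop$ os epimorfismos regulares são aplicações abertas, e que a abertura, ao contrário da simples sobrejectividade com topologia quociente, é estável para produtos fibrados.
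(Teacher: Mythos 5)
A sua demonstração está correcta, mas é mais autónoma do que a do texto, que se limita a observar que o grupo trivial é o objecto zero e remete a regularidade e a protomodularidade para os Teoremas \ref{l1} e \ref{l2}, provados mais adiante para $\Top^\Teoria$ com $\Teoria$ protomodular, e para a observação de que os limites de $\GrpTop$ se calculam em $\Grp$. A sua prova da regularidade é a especialização directa desses teoremas ao caso dos grupos: identifica os epimorfismos regulares com as sobrejecções contínuas abertas e usa a estabilidade da abertura para produtos fibrados, substituindo apenas o argumento geral via operação de Maltsev pelo cálculo elementar $e^{-1}(e(U))=UN$; o conteúdo é o mesmo. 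Onde o seu caminho difere genuinamente, e ganha, é na protomodularidade: a justificação do texto (os limites provêm de $\Grp$) não resolve por si só a continuidade da inversa de $b$, que é exactamente o ponto delicado, ao passo que o seu argumento, usando a secção para obter o homeomorfismo $X\cong K[f]\times Y$ e identificando $b$ com $a\times c$, fornece essa continuidade explicitamente. Um único reparo: a identificação de $b$ com $a\times c$ pressupõe a compatibilidade $b\circ s = s'\circ c$ com as secções, que faz parte da formulação usual do lema cindido dos cinco como morfismo de pontos mas não está explícita no enunciado do texto; sem ela, sob as mesmas identificações $b$ corresponde a $(k,y)\mapsto \bigl(a(k)\cdot\varphi(y),\,c(y)\bigr)$ com $\varphi(y)=b(s(y))\cdot s'(c(y))^{-1}$ contínua com valores em $K[f']$, que continua a ser um homeomorfismo, pelo que o argumento se repara sem custo.
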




\begin{cex}
O primeiro teorema de isomorfismo para grupos não é válido em $\GrpTop.$ Isso é uma consequência do facto de uma bijecção contínua nem sempre ser um isomorfismo em $\GrpTop$ (Os isomorfismos em $\GrpTop$ são simultaneamente homeomorfismos de espaços topológicos e homomorfismos de grupos):
Considere-se o morfismo de inclusão dos conjuntos, $$f:G=(S^1,\mathcal{P}(S^1)) \rightarrow H=(S^1, \text{top. ind. por } \mathbb{R/Z}).$$ 
O uso do teorema de isomorfismo canónico em $f$ implica $G\cong H$ o que é absurdo, sendo as topologias diferentes.
\end{cex}

\begin{tma}{\rm (1º Teorema de Isomorfismo em $\GrpTop$)\\}
Sejam $f:X\to Y$ um morfismo de grupos topológicos, $(K[f],\ker f)$ o núcleo de $f$ e $N\lhd X.$ Então, $N\subset K[f]$ se e somente se existir um morfismo único $\tilde f: X/N \to Y$ que torne o diagrama seguinte comutativo: \[
\begin{tikzpicture}
\matrix(m) [matrix of math nodes,column sep=1.7cm,row sep=1.7cm,text height=1.5ex, text depth=0.25ex]{X & X/N \\ Y& .\\};
\path[rar]
(m-1-1) edge node[auto]{$\pi$} (m-1-2);
\path[ar]
(m-1-1) edge node[auto,swap]{$f$} (m-2-1);
\path[nar,dashed]
(m-1-2) edge node[auto]{$\tilde f$} (m-2-1);
\end{tikzpicture}
\]
\end{tma}

\begin{proof}
A demonstração é uma cópia traduzida da demonstração habitual do teorema em $\Grp$.
\end{proof}

\begin{cor}{\rm (1º Teorema de Isomorfismo em $\GrpTop$)\\}
Nas condições do teorema anterior, $X/K[f] \cong \im f$ se e somente se $f$ for uma aplicação aberta.
\end{cor}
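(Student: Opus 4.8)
The plan is to reduce the statement to a question about a single continuous bijection, and then to exploit the fact — already available from the homogeneity lemma — that the quotient projection of a topological group is open. Throughout I read $\im f$ as the subgroup $f(X)$ equipped with the \emph{subspace} topology from $Y$, and ``$f$ aberta'' as $f$ being open \emph{onto its image} (this is the content-bearing reading: with the quotient topology on $\im f$ the isomorphism would be automatic). The only thing to decide is when the quotient topology on $X/K[f]$ and the subspace topology on $f(X)$ coincide.

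First I would apply the previous theorem with $N=K[f]$. Since trivially $K[f]\subseteq K[f]$, it produces a unique morphism $\tilde f\colon X/K[f]\to Y$ with $\tilde f\circ\pi=f$, where $\pi\colon X\to X/K[f]$ is the canonical projection. The underlying homomorphism of $\tilde f$ is injective: if $\tilde f(xK[f])=e$ then $f(x)=e$, so $x\in K[f]$ and the coset is trivial. Corestricting $\tilde f$ to its set-theoretic image yields a continuous bijective homomorphism $\bar f\colon X/K[f]\to\im f$. Because an isomorphism in $\GrpTop$ is precisely a continuous homomorphism whose inverse is continuous, the assertion $X/K[f]\cong\im f$ is equivalent to $\bar f$ being an open map.

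Next I would record the key topological ingredient: $\pi$ is open. Here the homogeneity of topological groups enters — for $U$ open in $X$ one has $\pi^{-1}(\pi(U))=U\cdot K[f]=\bigcup_{n\in K[f]}Un$, which is open since each right translation is a homeomorphism, so $\pi(U)$ is open by definition of the quotient topology. Writing $\iota\colon\im f\hookrightarrow Y$ for the inclusion, the factorization reads $f=\iota\circ\bar f\circ\pi$, and $g:=\bar f\circ\pi\colon X\to\im f$ is exactly $f$ viewed as a map onto its image. The equivalence then splits in two: if $g$ is open, take $V\subseteq X/K[f]$ open; then $\pi^{-1}(V)$ is open and, $\pi$ being surjective, $\bar f(V)=\bar f(\pi(\pi^{-1}(V)))=g(\pi^{-1}(V))$ is open in $\im f$, so $\bar f$ is open and hence a homeomorphism; conversely, if $\bar f$ is a homeomorphism then $g=\bar f\circ\pi$ is a composite of open maps, hence open.

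The diagram chase is routine; the single genuinely topological obstacle — and the step one must not forget — is the openness of $\pi$, which fails for arbitrary quotients in $\Top$ but holds here exactly because of homogeneity. A secondary point to state carefully is the distinction between the quotient topology on $X/K[f]$ and the subspace topology on $\im f$: the whole force of the corollary is that the two agree precisely when $f$ is relatively open, and one should be explicit that the inclusion $\iota$ contributes nothing to openness onto the image, so that ``$f$ open'' is to be understood as $g$ being open.
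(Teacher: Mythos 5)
Your proof is correct and follows essentially the same route as the paper's: both reduce the statement to the continuity of the inverse of the induced continuous bijection $\tilde f\colon X/K[f]\to \im f$ and settle it via the identity $\tilde f(\pi(U))=f(U)$, which the paper records tersely as $\tilde f(U+K[f])=f(U)$. Your version merely makes explicit two points the paper leaves implicit — the openness of the quotient projection $\pi$ (needed for the converse direction) and the distinction between ``open into $Y$'' and ``open onto the image'' — both of which are sound refinements rather than deviations.
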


\begin{proof}
$\tilde{f}:X\to \im f$ é bijectiva e contínua, mas $\tilde{f}^{-1}$ é contínua se e só se $f$ for aberta ($\tilde{f}(U+K[f]) = f(U)$).
\end{proof}

\begin{tma}
Se um grupo topológico tiver um sistema de vizinhanças numerável da identidade, então é metrizável.
\end{tma}

\begin{proof}
Um espaço topológico que tenha um sistema de vizinhanças numerável em cada ponto (isto é, que satisfaça o primeiro axioma de numerabilidade) é metrizável. Deste modo, basta verificar no elemento neutro.
\end{proof}

\begin{pro}
Um subgrupo $K\leq X$ é normal em $\GrpTop$ se e só se a inclusão $i:K\hookrightarrow X$ for um núcleo.
\end{pro}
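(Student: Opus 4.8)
The plan is to prove both implications by reducing the topological statement to its classical algebraic counterpart, using the explicit description of kernels in $\GrpTop$ that follows from the computation of pullbacks given earlier. Recall that the zero object is the one-point group and that, since the kernel $(K[f],\ker f)$ of a morphism $f:X\to Y$ is the pullback of $f$ along $0_Y$, it is the algebraic kernel $f^{-1}(0)$ equipped with the subspace topology inherited from $X$. I also take ``$K\leq X$'' to mean a subgroup carrying the subspace topology, with $i$ the inclusion.

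For the easy implication, suppose the inclusion $i:K\hookrightarrow X$ is a kernel, say $i=\ker f$ for some morphism $f:X\to Y$ in $\GrpTop$. Then, as a set, $K=f^{-1}(0)$ is the kernel of the underlying group homomorphism of $f$, hence a normal subgroup of $X$ in the algebraic sense; by the description above its topology is exactly the subspace topology, so $K$ is a normal topological subgroup of $X$. This direction needs no topological work beyond the form of the kernel.

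For the converse, suppose $K\leq X$ is normal. First I would form the quotient group $X/K$ and endow it with the quotient topology induced by the canonical surjection $q:X\to X/K$. The main step is to verify that $X/K$ is a topological group and that $q$ is a morphism of $\GrpTop$. The crucial topological fact is that $q$ is an \emph{open} map: for open $U\subseteq X$ one has $q^{-1}(q(U))=UK=\bigcup_{k\in K}Uk$, a union of open translates and hence open, so $q(U)$ is open by definition of the quotient topology. Openness of $q$ yields that $q\times q:X\times X\to (X/K)\times(X/K)$ is again a quotient map, and this is precisely what lets me factor the continuous operation $(x,y)\mapsto xy^{-1}$ of $X$ through $q\times q$ and conclude that the induced operation on $X/K$ is continuous. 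Thus $X/K$ is a topological group and $q$ is a morphism of $\GrpTop$.

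It then remains to identify the kernel of $q$. Algebraically $q^{-1}(0)=K$, and by the description of kernels recalled above $K[q]$ is this set with the subspace topology; since $K$ already carries the subspace topology, $\ker q$ coincides with $i$ as subobjects, so $i$ is a kernel. The hard part will be the verification that the quotient topology makes $X/K$ a topological group, that is, the passage from openness of $q$ to $q\times q$ being a quotient map and the resulting continuity of the operations; the algebraic content is classical and the subspace-topology bookkeeping is routine.
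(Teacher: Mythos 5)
Your proof is correct, and it follows the same master plan as the paper's for the non-trivial direction --- form $X/K$ with the quotient topology and exhibit $i$ as the kernel of the projection $q:X\to X/K$ --- but the two arguments put their topological effort in different places. The paper simply asserts that $X/K$ with the quotient topology (together with $q$) is the cokernel of $i$, and then verifies the universal property of the kernel directly: given $f:Y\to X$ with $q\circ f=0$, the set-theoretic factorisation $t:Y\to K$ is continuous because $K$ carries the subspace topology, via $t^{-1}(V\cap K)=f^{-1}(V)$. Your proof instead makes explicit the point the paper leaves implicit, namely that $X/K$ really is a topological group and $q$ really is a morphism of $\GrpTop$ (openness of $q$ from $q^{-1}(q(U))=UK$, hence $q\times q$ a quotient map, hence the operation descends continuously), and then identifies the kernel not by its universal property but by the explicit pullback description $K[q]=q^{-1}(0)$ with the subspace topology, which the paper's computation of pullbacks in $\GrpTop$ licenses. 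What your version buys is a complete justification that $q$ is even an object one may take a kernel of; what the paper's version buys is that the only topological input needed on the kernel side is the initiality of the subspace topology on $K$. The easy direction (a kernel is algebraically $f^{-1}(0)$, hence normal) is the same short computation in both.
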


\begin{proof}
($\Rightarrow$)

Seja $K\lhd X$ um subgrupo normal de $X.$ Mostre-se que a inclusão de $K$ em $X$ é um núcleo. Forma-se o grupo $X/K$ (o qual juntamente com a projecção natural é o conúcleo de $i:K\to X,$ munido da topologia quociente). Seja $f:Y\to X$ um morfismo com $\pi\circ f = 0.$ Existe a única função $t:Y\to K,$ que torna o diagrama comutativo:
\[
\begin{tikzpicture}
\matrix (m) [matrix of math nodes, column sep=1.5cm, row sep= 1.5cm,text height=1.5ex, text depth=0.25ex] {K & X & X/K\\ Y & &.\\};
\path[rar] (m-1-2) edge node[auto]{$\pi$} (m-1-3);
\path[right hook->] (m-1-1) edge node[auto]{$i$} (m-1-2);
\path[ar] (m-2-1) edge node[auto,swap]{$f$} (m-1-2);
\path[ar,dashed](m-2-1) edge node[auto]{$t$} (m-1-1);
\end{tikzpicture}
\]
De modo a averiguar que $t$ é um morfismo em $\GrpTop,$ seja $U\subseteq K$ um aberto. Existe um aberto $V\subset X$ tal que $U=V\cap K,$ portanto $$f^{-1}(V) = (i\circ t)^{-1}(V) = t^{-1}(i^{-1}(V)) = t^{-1}(U),$$ do qual sai que $t$ é contínuo.

($\Leftarrow$)

Se $i:K\to X$ é o núcleo de $q:X\to Q$ obtém-se $q(x^{-1} k x) = q(x^{-1})q(k)q(x) = q(x)^{-1}q(x) = e_Q,$ e daí $x^{-1} k x$ pertence ao núcleo de $i$, portanto $K \lhd X.$
\end{proof}

\begin{tma}
Seja $H\leq X$ um subgrupo de $X.$ Se $H$ for aberto, então é também fechado.
\end{tma}

\begin{proof}
Se $H$ é aberto, então $xH$ é aberto também para $x\in X,$ porém, $$X\setminus H = \bigcup_{x\not\in H} xH$$ que é uma reunião de abertos, é aberto. Logo, $H$ é fechado.
\end{proof}

\begin{tma}
Seja $H\leq X$ um subgrupo topológico, então $\overline{H}$ é também um subgrupo topológico. Além disso, se $H$ for normal então $\overline{H}$ também será.
\end{tma}

\begin{proof}
Directa. Prova-se que as operações se estendem de forma natural para operações contínuas em $\overline{H}.$
\end{proof}

\section{Grupos topológicos como uma variedade de álgebra universal}
Estruturas algébricas são conjuntos munidos de operações que satisfazem certas condições tal como associatividade e comutividade. A álgebra universal tem como objectivo formalizar essas estruturas num estudo abrangente, unificando todos os ramos de álgebra moderna. É útil no contexto de álgebra topológica porque todas as álgebras topológicas podem ser modeladas como álgebras universais internas em $\Top,$ $\mathbf{Haus}$, etc.


\begin{defc}
Uma {\it estrutura algébrica} (ou {\it álgebra}) é um par ordenado \(\underline{A}=(A,F),\) onde $A$, o {\it universo} de $\underline{A}$, é um conjunto e $$F=\{\omega^{A}:A^{n_\omega}\to A \mid \omega \in \Omega \}$$ consiste nas operações finitas de $\underline{A}$ indexadas pelo conjunto $\Omega.$ A {\it aridade} de uma operação $\omega^{B}$ é o número natural $n_\omega.$ A {\it assinatura} de $\underline{A}$ é a função $\tau:\Omega \to \mathbb{N};\ \omega \to n_\omega.$ As operações de aridade $n$ formam um conjunto $\Omega_n.$
\end{defc}

De modo a economizar espaço, daqui em diante omitiremos especificar a função $\tau:\Omega\to \mathbb{N}$ e só irá ser referida uma álgebra de assinatura $\Omega.$

Uma determinada espécie de álgebras, por exemplo, os grupos, têm em comum as suas operações. Há uma correspondência directa entre essa determinada espécie, e as suas operações com as leis que as regem.

As operações de aridade $n$ ou as operações $n$-árias designam-se elementos de $\Omega_n.$ Utiliza-se o termo {\it constante} para as operações de aridade $0$.

\begin{defc}
Um {\it homomorfismo de álgebras de assinatura $\Omega$} é uma função $f:A\to B$ entre os universos das álgebras $\underline{A}$ e $\underline{B}$ que satisfaz, para cada operação $\omega\in \Omega_n,$ $$f\omega^A(a_1,\ldots,a_n)= \omega^B(f(a_1),\ldots,f(a_n)).$$
\end{defc}

Considera-se útil definir as classes de álgebras que satisfaçam uma lista de axiomas. Incluímos os grupos, por exemplo, que satisfazem os três axiomas habituais. Para tal, definimos o conjunto dos termos $T(X).$

\begin{defc}
O conjunto dos {\it termos} nas variáveis, $X$, de assinatura $\Omega$ é definido recursivamente (suponha-se que $X\cap \Omega_0=\emptyset$):
\begin{itemize}
\item Os termos constantes $c\in \Omega_0$ e as variáveis $x\in X$ pertencem a $T(X)$;
\item $f(t_1,\ldots,t_n)\in T(X)$ quando $t_i\in T(X)$ e $f\in \Omega_n$.
\end{itemize}
Define-se a {\it avaliação} dos termos de forma natural.
\end{defc}

Com estes termos definimos os axiomas como sendo {\it equações}: $t_i \approx t_j,$ e diz-se que uma álgebra de assinatura $\Omega$ satisfaz $t_i \approx t_j$ se $t_i(\overline{a}) = t_j(\overline{a}),\ \forall \overline{a}.$ Usa-se o símbolo $\approx$ para realçar a diferença entre equações e a igualdade de elementos do universo.

Neste momento já se consegue definir um dos conceitos centrais de álgebra universal, a variedade.
\begin{defc}
Uma {\it variedade} $\mathcal{V}$ de álgebras universais é a categoria de toda a álgebra sobre uma assinatura $\Omega$ que satisfaz um conjunto de leis $E,$ cujas variáveis são elementos de $X$, bem como os morfismos entre elas. Designa-se uma teoria algébrica pelo terno $\Teoria=(\Omega,X,E).$
\end{defc}

Como exemplo intuitivo, tem-se a variedade dos grupos:
\begin{ex}
Sejam $\Omega_0=\{z\},\ \Omega_1=\{-\}$ e $\Omega_2=\{+\}$ e seja $X=\{a,b,c\}.$ Os axiomas escritos em $a,$ $b$ e $c$ são $$E=\left\{a+(b+c)\approx(a+b)+c,\ a+z\approx a,\ z+a\approx a,\ a+(-a)\approx z,\ (-a)+a\approx z\right\}.$$ $\Teoria=(\Omega,X,E)$ é a teoria dos grupos e a variedade correspondente (que se designa por $\Alg(\Teoria)$) é isomorfa a $\Grp.$
\end{ex}

\begin{pro}
\label{varexacta}
Seja qual for a teoria algébrica, a variedade $\Alg(\Teoria)$ é exacta.
\end{pro}

\begin{proof}
Dado um morfismo $f:\underline{A} \to \underline{B}$ em $\Alg(\Teoria),$ mostre-se que existe a factorização da proposição \ref{factor}. Como $\Conj$ é regular, existe a factorização:

\[\begin{tikzpicture}
\matrix (m) [matrix of math nodes, column sep=1.5cm, row sep=1.5cm]{A & & B \\& Q, &\\};
\path[ar] (m-1-1) edge node[auto]{$|f|$} (m-1-3)
(m-1-1) edge node[auto,swap]{$e$} (m-2-2)
(m-2-2) edge node[auto,swap]{$m$}(m-1-3);
\end{tikzpicture}\]
em $\Conj.$ Mostremos que essa factorização é compatível com as operações de $\Teoria.$

Seja $\tau$ uma operação de aridade $n;$ da equação $|f|\circ \tau = \tau \circ |f|^n$ advém o seguinte diagrama comutativo em $\Conj$:

\[
\label{ref}
\begin{tikzpicture}
\matrix (m) [matrix of math nodes, column sep=1.5cm, row sep=1.5cm,text height=1.5ex, text depth=0.25ex]{A^n & S^n &  B^n \\ A& S & B \\ };
\path[ar] (m-1-1) edge node[auto,swap]{$\tau^A$} (m-2-1)
(m-1-3) edge node[auto]{$\tau^B$} (m-2-3);
\path[ar] (m-1-2) edge (m-2-2);
\path[rar] (m-1-1) edge node[auto]{$e^n$} (m-1-2)
(m-2-1) edge node[auto,swap]{$e$} (m-2-2);
\path[nar] (m-1-2) edge node[auto]{$m^n$} (m-1-3)
(m-2-2) edge node[auto,swap]{$m$} (m-2-3);
\end{tikzpicture}
\]
Para levantar o diagrama acima à variedade é necessário que $e:A\to S$ e $m: S \to B$ definam morfismos em $\Alg(\Teoria)$. Primeiro mostramos que $e^n$ é um epimorfismo regular. Basta considerarmos $e\times e.$

Como a composição de epimorfismos regulares é regular e o produto fibrado de um epimorfismo regular é um epimorfismo regular o produto fibrado que se segue mostra que $e\times e= (1\times e)\circ(e\times 1)$ é um epimorfismo regular.

\[
\begin{tikzpicture}
\matrix (m) [matrix of math nodes, column sep=1.5cm, row sep=1.5cm,text height=1.5ex, text depth=0.25ex]{A \times B & S \times B \\ A & S\\};
\path[ar] (m-1-1) edge node[auto]{$e\times 1$} (m-1-2)
(m-1-1) edge node[auto,swap]{$p_1$} (m-2-1)
(m-2-1) edge node[auto,swap]{$e$} (m-2-2)
(m-1-2) edge node[auto]{$p_2$} (m-2-2);
\end{tikzpicture}
\]

Como os monomorfismos são igualmente estáveis para produtos fibrados, a Pro\-po\-si\-ção \ref{factorim} mostra que a normalidade de $\Conj$ implica que exista uma função única $\tau^S:S^n\to S$ tornando o diagrama \ref{ref} comutativo. Assim, pode levantar-se a factorização de $|f|$ para $\Alg(\Teoria),$ motivo pelo qual é normal.

Seja $r_1,r_2:\underline{R}\to \underline{X}$ uma relação de equivalência em $\Alg(\Teoria).$ Em \Conj\ podemos escrever $q=\coig(|r_1|,|r_2|)$ e portanto, como $\Conj$ é exacta, $|r_1|,|r_2|:R\to X$ é o par núcleo de $q:X\to Q.$ Verifica-se ainda que $|r_1|^n,|r_2|^n: R^n \to X^n$ é o par núcleo de $q^n$ (o produto de um epimorfismo regular é regular). Decorre que $Q$ pode ser munido da estrutura de álgebra sobre $\Teoria$ e, então, $R\rightrightarrows X$ é uma relação de equivalência em $\Alg(\Teoria).$
 
\end{proof}
As álgebras internas, de uma determinada teoria algébrica, são objectos de uma categoria cujas operações são morfismos na categoria. Por exemplo, $\GrpTop$ pode ser visto como a categoria cujos objectos são os de $\Top$ munidos das operações de grupo que são aplicações contínuas. Para uma teoria qualquer, escreve-se $\C^\Teoria$ para denotar a categoria de todas as álgebras sobre $\Teoria$ em $\C.$ Continuamos a chamar-lhes variedades. (Note-se que $\Conj^\Teoria$ é exactamente $\Alg(\Teoria).$)

Utilizar álgebra universal neste contexto supõe certas vantagens: 1) a álgebra universal em si tem muitas ferramentas poderosas ao nosso dispor, 2) é um caminho eficiente para estudar diferentes álgebras topológicas em simultâneo e 3) a ligação entre a álgebra universal e a teoria das categorias já estabelecida permite-nos aproveitar as vantagens da linguagem de categorias.

\section{Álgebras protomodulares}

Anuncie-se um resultado muito conveniente sobre teorias protomodulares (Bourn-Janelidze \cite{Bourn3}) cuja demonstração omitimos.

\begin{tma}\label{this}
Uma variedade $\Alg(\Teoria)$ é protomodular quando para algum número natural $n$, a teoria contém:
\begin{enumerate}
\item $n$ constantes $e_1, \ldots , e_n$;
\item $n$ termos binários $\alpha_1,\ldots, \alpha_n$ com $\alpha_i(x,x)=e_i$;
\item um termo $\theta$ com $\theta(\alpha_1(x,y),\ldots,\alpha_n(x,y),y)=x$.
\end{enumerate}
\end{tma}
Do teorema deduz-se o que se segue.

\begin{cor}
Uma variedade $\Alg(\Teoria)$ é semi-abeliana quando, para algum número natural $n$, a teoria contém:
\begin{enumerate}
\item uma constante $e$;
\item $n$ termos binários $\alpha_1,\ldots, \alpha_n$ com $\alpha_i(x,x)=e$;
\item um termo $\theta$ com $\theta(\alpha_1(x,y),\ldots,\alpha_n(x,y),y)=x$.
\end{enumerate}
\end{cor}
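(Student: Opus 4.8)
O plano é verificar que $\Alg(\Teoria)$ cumpre a definição de categoria semi-abeliana anunciada acima, isto é, ser pontuada, cocompleta, exacta e protomodular, estabelecendo cada uma destas quatro propriedades em separado. Duas delas são imediatas: pela Proposição~\ref{varexacta} toda a variedade é exacta, e toda a variedade de álgebras universais é cocompleta, por ser um facto geral que tais categorias possuem coprodutos e co-igualizadores. Ficam por tratar a protomodularidade e a pontuação.

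A protomodularidade reduz-se de imediato ao Teorema~\ref{this}. Com efeito, as hipóteses do corolário são o caso particular das do teorema em que $e_1=\cdots=e_n=e$: dispõe-se de $n$ constantes (todas iguais a $e$), dos termos binários $\alpha_1,\ldots,\alpha_n$ com $\alpha_i(x,x)=e=e_i$, e do termo $\theta$ satisfazendo $\theta(\alpha_1(x,y),\ldots,\alpha_n(x,y),y)=x$. Aplicando o teorema conclui-se que $\Alg(\Teoria)$ é protomodular.

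O passo com conteúdo real, e que se espera ser o principal obstáculo, é a pontuação, pois é aqui que intervém o facto de $e$ ser a \emph{única} constante da teoria (a diferença essencial entre as hipóteses do teorema e as do corolário). Em qualquer variedade a álgebra singular é o objecto terminal; bastaria então mostrar que é também inicial, passando a ser um objecto zero. Como o objecto inicial é a álgebra livre $F(\emptyset)$ sobre o vazio, cujos elementos são os termos fechados módulo $E$, primeiro mostraria que, sendo $e$ a única constante, todo o termo fechado é igual a $e$, obtendo $F(\emptyset)\cong\mathbf{1}$. O ponto delicado é exactamente este colapso: tem de se garantir que $e$ é preservada por todas as operações (equivalentemente, que $e$ é o único termo fechado) e não apenas que existe um único símbolo de constante -- sem esta unicidade forte a variedade continuaria protomodular mas poderia deixar de ser pontuada. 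A identidade auxiliar $\theta(e,\ldots,e,x)=x$, obtida pondo $x=y$ em~(3) e usando~(2), evidencia o papel de $e$ como elemento neutro e é o que se usa para o reconhecer como o zero. Reunindo a pontuação com a exactidão, a cocompletude e a protomodularidade, segue pela definição que $\Alg(\Teoria)$ é semi-abeliana.
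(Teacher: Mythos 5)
A sua proposta está correcta e segue essencialmente o mesmo caminho da demonstração do artigo: exactidão pela Proposição~\ref{varexacta}, protomodularidade pelo Teorema~\ref{this} tomando $e_1=\cdots=e_n=e$, cocompletude como facto geral das variedades, e a observação de que uma variedade é pontuada precisamente quando tem uma única constante. O único acréscimo é que você torna explícita a subtileza que o artigo despacha numa frase — «constante única» tem de significar único termo fechado a menos das equações da teoria (isto é, $F(\emptyset)\cong\mathbf{1}$), e não apenas único símbolo de constante na assinatura —, uma precisão legítima e bem-vinda.
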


\begin{proof}
Uma variedade é pontuada precisamente quando exista uma cons\-tante única. $\Conj^\Teoria$ é exacta (Proposição \ref{varexacta}).
\end{proof}

\begin{defc}
Diz-se que uma teoria é protomodular (semi-abeliana, abeliana, etc.) quando $\Conj^\Teoria (= \Alg(\Teoria)$) é protomodular (semi-abeliana, abeliana, respec\-ti\-vamente).
\end{defc}

Prove-se de uma forma sucinta que $\GrpTop$ é protomodular e pontuada. A teoria dos grupos é uma teoria protomodular e pontuada. Examinámos anteriormente que os limites em $\GrpTop$ provêm dos limites em $\Grp$, motivo pelo qual $\GrpTop$ é igualmente protomodular.

\begin{ex}
Qualquer teoria que inclua as operações do grupo é semi-abeliana. 
\end{ex}

\begin{proof}
Na caracterização de Bourn-Janelidze basta tomar $n=1$, $\alpha (x,y)=xy^{-1}$ e $\theta (x,y)=xy$.
\end{proof}

\begin{cex}
As operações de $\mathbf{Mon}$--a variedade dos monóides--são $\Omega=\{e,\star\}$ com uma constante, $e$, e um termo binário, $\star$. Sendo $X=\{a,b,c\}$ as leis de monóides são: $$E=\left\{a\star e \approx a, e\star a \approx a, (a\star b)\star c \approx a\star (b\star c)\right\}.$$ $\mathbf{Mon}$ não é protomodular.
\end{cex}

\section{Álgebras topológicas protomodulares}
No caso de $\Top^\Teoria$ extrai-se o resultado seguinte como corolário ao Teorema \ref{this} que irá ser uma base de apoio, como em \cite{Borceux2}.

\begin{cor}
Seja $A$ uma álgebra sobre uma teoria protomodular $\Teoria$ e sejam $e_i$, $\alpha_i$ e $\theta$ como no Teorema \ref{this}. Para cada elemento $a\in A$ existem aplicações contínuas,
\begin{align}
\nonumber
\iota_a:& A \rightarrowtail A^n,& x\mapsto (\alpha_1(x,a),\ldots,\alpha_n(x,a)),\\
\nonumber
\theta_a:& A^n \to A,& (a_1,\ldots, a_n) \mapsto \theta(a_1,\ldots,a_n,a) 
\end{align}
tais que $\theta_a \circ \iota_a = \id_A$ e $\iota_a(a)=(e_1,\ldots, e_n)\in A^n.$
\end{cor}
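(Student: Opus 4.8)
The plan is to treat $\iota_a$ and $\theta_a$ as partially-evaluated term operations of the topological algebra $A$, obtain their two defining identities as direct instances of the axioms listed in Theorem \ref{this}, and reserve whatever care is needed for the continuity claim. Since the corollary only asserts that $\iota_a$ and $\theta_a$ are continuous \emph{maps} (not homomorphisms of $\Teoria$-algebras), no compatibility with the operations has to be verified.

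First I would settle continuity. Because $A\in\Top^\Teoria$, every basic operation $\omega^A$ is continuous, and hence so is every derived (term) operation: by structural induction on a term $t$ in $k$ variables, its evaluation $t^A\colon A^k\to A$ is a variable projection, a constant map, or a composite $\omega^A\circ(t_1^A,\dots,t_m^A)$, each of which is continuous by the inductive hypothesis and the universal property of the product topology. In particular $\alpha_i\colon A\times A\to A$ and $\theta\colon A^{n+1}\to A$ are continuous. Now $\iota_a$ arises from the $\alpha_i$ by fixing the second argument to the constant $a$: the map $x\mapsto(x,a)$ is continuous (identity in the first coordinate, a constant map in the second), so each $x\mapsto\alpha_i(x,a)$ is continuous, and a map into $A^n$ is continuous precisely when all its components are; hence $\iota_a$ is continuous. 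The same argument applied to $(a_1,\dots,a_n)\mapsto(a_1,\dots,a_n,a)$ composed with $\theta$ gives continuity of $\theta_a$.

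Next I would read off the two identities. Evaluating the composite,
\[
\theta_a(\iota_a(x))=\theta\bigl(\alpha_1(x,a),\dots,\alpha_n(x,a),a\bigr)=x,
\]
where the final equality is exactly axiom (3) of Theorem \ref{this} with $y=a$; thus $\theta_a\circ\iota_a=\id_A$. Likewise,
\[
\iota_a(a)=\bigl(\alpha_1(a,a),\dots,\alpha_n(a,a)\bigr)=(e_1,\dots,e_n),
\]
using axiom (2), $\alpha_i(x,x)=e_i$, with $x=a$. The monic arrow $\iota_a\colon A\rightarrowtail A^n$ then comes for free: possessing the continuous retraction $\theta_a$ makes $\iota_a$ a split monomorphism in $\Top$, in particular a monomorphism.

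I do not expect a real obstacle here; the entire substance is that partial evaluation of a continuous term operation stays continuous, which reduces to the continuity of term operations together with the universal property of the product topology. The one point worth flagging explicitly is that $\iota_a$ and $\theta_a$ are generally \emph{not} homomorphisms (already for groups, $\iota_a(x)=xa^{-1}$ fails to preserve the operation), so it is essential that the statement asks only for continuity.
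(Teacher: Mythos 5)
Your proposal is correct and is exactly the argument the paper intends: its own proof consists of the single word ``Directa'', and what you have written is precisely that direct verification (continuity of term operations, continuity of partial evaluation via the product topology, and the two identities as instances of axioms (2) and (3) of Teorema \ref{this}). Your closing remark that $\iota_a$ and $\theta_a$ need not be homomorphisms is a sensible clarification but changes nothing.
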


\begin{proof}
Directa.
\end{proof}

\begin{lem}
Dado um elemento $a$ de uma álgebra sobre uma teoria protomodular, os conjuntos $$\bigcap_{i=1}^n \alpha_i(-,a)^{-1}(U_i),$$ onde os $U_i$ são as vizinhanças abertas das constantes $e_i$, constituem um sistema fundamental de vizinhanças abertas de $a,$ e consequentemente se uma propriedade estável para limites que é válida numa vizinhança de cada constante é válida numa vizinhança de cada ponto.
\end{lem}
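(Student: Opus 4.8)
The plan is to recognise the displayed sets as preimages, under the section $\iota_a$ of the previous corollary, of the elementary open boxes of the product $A^n$, and then to exploit the retraction $\theta_a$ to squeeze an arbitrary neighbourhood of $a$ from the outside. First I would record that each set is genuinely an open neighbourhood of $a$: the map $\alpha_i(-,a)$ is continuous (it is an operation of the topological algebra with one argument fixed), so $\alpha_i(-,a)^{-1}(U_i)$ is open and a finite intersection of opens is open; and since $\alpha_i(a,a)=e_i\in U_i$ the point $a$ lies in every factor, hence in the intersection. The key rewriting is
$$\bigcap_{i=1}^n \alpha_i(-,a)^{-1}(U_i)=\iota_a^{-1}\bigl(U_1\times\cdots\times U_n\bigr),$$
which holds by the very definition $\iota_a(x)=(\alpha_1(x,a),\ldots,\alpha_n(x,a))$.

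To see these form a fundamental system, I would take an arbitrary open $V\ni a$ and use that $\theta_a$ is continuous with $\theta_a(e_1,\ldots,e_n)=\theta(e_1,\ldots,e_n,a)=a$ (put $x=y=a$ in the identity $\theta(\alpha_1(x,y),\ldots,\alpha_n(x,y),y)=x$). Thus $\theta_a^{-1}(V)$ is an open neighbourhood of $(e_1,\ldots,e_n)$ in $A^n$, and by definition of the product topology there are opens $U_i\ni e_i$ with $U_1\times\cdots\times U_n\subseteq\theta_a^{-1}(V)$. Then for any $x\in\iota_a^{-1}(U_1\times\cdots\times U_n)$ one has $\iota_a(x)\in\theta_a^{-1}(V)$, so $x=\theta_a(\iota_a(x))\in V$ by $\theta_a\circ\iota_a=\id_A$; this is exactly the inclusion of such a set into $V$.

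For the consequence I would use that $\iota_a$, being a split monomorphism, is a topological embedding (it is injective and sends opens of $A$ to relative opens of its image, via the retraction $\theta_a$), so the neighbourhood $N:=\iota_a^{-1}(U_1\times\cdots\times U_n)$ is homeomorphic to the subspace $\iota_a(N)\subseteq U_1\times\cdots\times U_n$. If a property $\mathfrak{P}$ that is stable under limits holds on a neighbourhood of each constant $e_i$, I would choose the $U_i$ inside such neighbourhoods; then the box $U_1\times\cdots\times U_n$ inherits $\mathfrak{P}$ (a product is a limit of spaces having $\mathfrak{P}$), and so does the subspace $\iota_a(N)$ (a subspace inclusion is an equalizer, again a limit), whence $N\cong\iota_a(N)$ carries $\mathfrak{P}$ on a neighbourhood of the arbitrary point $a$.

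Most of this is forced once the corollary is in hand; the one genuine subtlety I anticipate is the consequence, where I must (i) justify that $\iota_a$ is not merely a mono but a topological embedding, so that $\mathfrak{P}$ really transfers back from $\iota_a(N)$ to $N$, and (ii) read ``stable under limits'' as covering both finite products and subspace inclusions, which are precisely the limit constructions invoked above. The orientation of the two maps in the second paragraph—using $\theta_a$ (not $\iota_a$) to pull $V$ back—is the other point where care is needed.
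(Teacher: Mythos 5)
Your argument is correct and is essentially the paper's own proof: the paper simply says to apply $\iota_a^{-1}$ to the fundamental system of open boxes around $(e_1,\ldots,e_n)$, and your use of the retraction $\theta_a$ to pull an arbitrary $V\ni a$ back to such a box is exactly the detail that makes that one-line argument work. Your careful treatment of the ``consequence'' clause goes beyond what the paper writes down (it offers no argument there), but it is consistent with the intended reading.
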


\begin{proof}
Obtém-se o resultado aplicando $\iota^{-1}$ ao sistema fundamental de vi\-zin\-han\-ças abertas de $(e_1,\ldots,e_n)$ o conjunto $\{U_1\times\cdots \times U_n \vert e_i\in U_i,\ U_i\text{ aberto}\}.$
\end{proof}

\begin{cor}
Se uma álgebra topológica sobre uma teoria protomodular tiver um sistema de vizinhanças numerável de cada constante, então é metrizável.
\end{cor}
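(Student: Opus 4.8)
O plano é reduzir a afirmação à primeira numerabilidade em todos os pontos e, só depois, obter a métrica como no teorema de metrização já estabelecido para $\GrpTop.$ Por hipótese, cada constante $e_i$ admite um sistema fundamental numerável de vizinhanças abertas $\{U_i^{(k)}\}_{k\in\mathbb{N}}.$ Fixado um ponto arbitrário $a\in A,$ começaria por aplicar o lema anterior, segundo o qual os conjuntos $\bigcap_{i=1}^n \alpha_i(-,a)^{-1}(U_i),$ quando cada $U_i$ percorre as vizinhanças abertas de $e_i,$ já formam um sistema fundamental de vizinhanças de $a.$

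O passo decisivo é verificar que basta deixar cada $U_i$ variar na base numerável $\{U_i^{(k)}\}_k.$ Com efeito, dada uma vizinhança aberta qualquer $U_i$ de $e_i$ existe $k$ com $U_i^{(k)}\subseteq U_i,$ e como $\alpha_i(-,a)^{-1}$ preserva inclusões vem $\alpha_i(-,a)^{-1}(U_i^{(k)})\subseteq \alpha_i(-,a)^{-1}(U_i);$ logo a subfamília assim obtida continua a ser cofinal, portanto ainda é um sistema fundamental. Esta subfamília fica indexada por $\mathbb{N}^n,$ conjunto numerável, de modo que $a$ possui uma base numerável de vizinhanças. Sendo $a$ arbitrário, $A$ é primeiro-numerável; dito de outro modo, na linguagem do lema anterior, a primeira numerabilidade é uma propriedade estável para limites válida numa vizinhança de cada constante, logo válida em cada ponto.

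Por último, da primeira numerabilidade deduziria a metrizabilidade exactamente como no teorema análogo para grupos topológicos: os termos $\alpha_i$ e $\theta$ fornecem a homogeneidade que desempenha o papel das translações, permitindo transportar a base numerável em torno das constantes para uma métrica sobre $A.$

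A parte realmente delicada será este último passo. Em topologia geral a primeira numerabilidade não basta para a metrizabilidade; o que a garante neste contexto é o análogo do teorema de Birkhoff--Kakutani, cuja validade assenta na homogeneidade induzida pelas operações de uma teoria protomodular (tal como no caso dos grupos). A transferência da base numerável das constantes para os restantes pontos, essa, é rotineira graças ao lema anterior.
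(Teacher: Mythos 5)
A primeira metade da tua proposta está correcta e coincide com o que o texto pretende: o lema anterior dá, para cada $a$, o sistema fundamental $\bigcap_{i=1}^n \alpha_i(-,a)^{-1}(U_i)$, e o teu argumento de cofinalidade mostra que basta deixar cada $U_i$ percorrer a base numerável de vizinhanças de $e_i$, obtendo-se assim uma base numerável em cada ponto de $A$. Até aqui nada a apontar.

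O problema está no último passo, que tu próprio assinalas como «a parte realmente delicada», mas que depois reduzes a «homogeneidade». Essa redução não é válida: um espaço homogéneo, regular e primeiro-numerável não tem de ser metrizável (a recta de Sorgenfrey é um contra-exemplo), pelo que a homogeneidade induzida pelos $\alpha_i$ e $\theta$ não é o mecanismo que garante a metrizabilidade. O que o teorema de Birkhoff--Kakutani usa de facto é a estrutura quantitativa das operações: uma cadeia de vizinhanças $V_{n+1}$ da identidade com $V_{n+1}V_{n+1}V_{n+1}\subseteq V_n$, a partir da qual se constrói uma pseudométrica invariante; a versão para álgebras protomodulares exige reproduzir essa construção com os termos $\theta$ e $\alpha_i$, como em \cite{Borceux2,Borceux3}. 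Esse é precisamente o conteúdo não trivial do corolário e fica por fazer na tua proposta. Em abono da verdade, a dissertação também não o faz: a demonstração do teorema análogo para $\GrpTop$ afirma que «um espaço topológico primeiro-numerável é metrizável», o que é falso para espaços gerais. Tanto lá como aqui, o passo essencial é o análogo de Birkhoff--Kakutani, que deve ser demonstrado ou citado explicitamente, e não derivado da homogeneidade.
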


\begin{pro}
Se $\Teoria$ é uma teoria protomodular, então cada $A\in \Obj (\Top^\Teoria)$ é regular.
\end{pro}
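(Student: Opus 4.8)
O plano é estabelecer que cada ponto de $A$ possui uma base de vizinhanças fechadas (regularidade), reduzindo o problema ao comportamento nas constantes $e_1,\ldots,e_n$. Fixado $a\in A$, pelo Lema precedente toda a vizinhança de $a$ contém um aberto da forma $\iota_a^{-1}(U_1\times\cdots\times U_n)$, onde cada $U_i$ é uma vizinhança aberta de $e_i$. Como $\iota_a$ é contínua e $\theta_a\circ\iota_a=\id_A$ (pelo Corolário que fornece $\iota_a$ e $\theta_a$), a aplicação $\iota_a$ é um mergulho topológico de $A$ em $A^n$ que envia $a$ no ponto $\mathbf e=(e_1,\ldots,e_n)$.

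Primeiro mostraria que basta garantir a regularidade nas constantes, isto é, que cada $e_i$ admite uma base de vizinhanças fechadas em $A$. Com efeito, dada a vizinhança básica $\iota_a^{-1}(U_1\times\cdots\times U_n)$ de $a$, escolhe-se para cada $i$ uma vizinhança fechada $C_i\subseteq U_i$ de $e_i$; então $C=C_1\times\cdots\times C_n$ é uma vizinhança fechada de $\mathbf e$ em $A^n$ contida em $U_1\times\cdots\times U_n$, e $\iota_a^{-1}(C)$ é simultaneamente fechada (continuidade de $\iota_a$) e vizinhança de $a$ (pois $\iota_a(a)=\mathbf e$ pertence ao interior de $C$), estando contida na vizinhança de partida. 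Assim cada ponto herda uma base de vizinhanças fechadas e $A$ é regular.

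Resta o passo central — a regularidade nas constantes — que conto tratar imitando a prova de que todo o grupo topológico é regular. As operações do Teorema \ref{this} fornecem um termo de Mal'cev contínuo $p(x,y,z)=\theta(\alpha_1(x,y),\ldots,\alpha_n(x,y),z)$, com $p(x,y,y)=x$ e $p(x,x,z)=z$. Dado um aberto $U\ni e_i$, a continuidade de $p$ em $(e_i,e_i,e_i)$ (onde toma o valor $e_i$) dá um aberto $V\ni e_i$ com $p(V\times V\times V)\subseteq U$, e pretende-se concluir que $\overline V\subseteq U$.

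A dificuldade essencial está neste último passo. Para grupos, a inclusão $\overline V\subseteq U$ resulta da homogeneidade do espaço (as translações são homeomorfismos) e de uma reconstrução exacta do tipo $x=v_2 v_1^{-1}$; aqui não dispomos de homogeneidade nem de tal identidade a partir unicamente das leis de Mal'cev. O plano para contornar o obstáculo é usar a reconstrução $\theta(\alpha_1(x,y),\ldots,\alpha_n(x,y),y)=x$ para controlar $\alpha_i(x,z)$ à custa de $\alpha_\bullet(x,y)$ e $\alpha_\bullet(y,z)$ junto da diagonal, verificando que os conjuntos $E_{\vec U}=\{(x,y):\alpha_i(x,y)\in U_i,\ \forall i\}$ constituem uma estrutura uniforme compatível com a topologia — note-se que o Lema já garante que $E_{\vec U}[a]$ é base de vizinhanças de $a$ —, donde $A$ seria uniformizável e, em particular, regular. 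Transformar os argumentos de continuidade pontual na diagonal em estimativas suficientemente uniformes (axiomas de composição e simetria da família $E_{\vec U}$) é o ponto delicado da demonstração.
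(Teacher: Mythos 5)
A redução ao caso das constantes está correcta e é até mais explícita do que a do texto: a verificação de que $\iota_a^{-1}(C_1\times\cdots\times C_n)$ é uma vizinhança fechada de $a$ contida na vizinhança básica dada funciona sem problemas. A lacuna está no passo central, que você próprio deixa em aberto --- e é precisamente aí que vive toda a demonstração. O desvio por uma estrutura uniforme gerada pelos conjuntos $E_{\vec U}$ não é um caminho que eu recomendaria: os axiomas de simetria e de composição exigiriam estimativas \emph{uniformes} sobre os $\alpha_i$ que não decorrem da continuidade pontual das operações (é exactamente o ``ponto delicado'' que você identifica), e, se funcionasse, provaria que $A$ é uniformizável, logo completamente regular --- uma conclusão mais forte do que a enunciada e que estes axiomas não parecem sustentar. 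Além disso, a formulação via o termo ternário de Maltsev $p$ perde o controlo necessário: a identidade de reconstrução disponível, $p(a,b,b)=a$, volta a colocar $a$ como argumento, pelo que saber que $b$ está perto de $a$ não coloca $a$ na imagem de um produto de vizinhanças controladas.

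O argumento do texto fecha o passo central de forma pontual e directa, sem uniformidades. Dada uma vizinhança aberta $V$ de $e_i$, a continuidade de $\theta$ no ponto $(e_1,\ldots,e_n,e_i)$, onde toma o valor $e_i$, fornece vizinhanças abertas $U_k\ni e_k$ e $U\ni e_i$ com $\theta(U_1\times\cdots\times U_n\times U)\subseteq V$. Para ver que $\overline{U}\subseteq V$, toma-se $a\in\overline{U}$ e observa-se que $\bigcap_{k=1}^{n}\alpha_k(a,-)^{-1}(U_k)$ é uma vizinhança aberta de $a$, porque $\alpha_k(a,a)=e_k\in U_k$; como $a\in\overline{U}$, esta vizinhança intersecta $U$ num ponto $b$, e então a identidade $\theta(\alpha_1(a,b),\ldots,\alpha_n(a,b),b)=a$ dá $a\in\theta(U_1\times\cdots\times U_n\times U)\subseteq V$. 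O ingrediente que falta na sua proposta é exactamente esta combinação: usar a caracterização de $a\in\overline{U}$ (toda a vizinhança de $a$ intersecta $U$) juntamente com o facto de $\{b:\alpha_k(a,b)\in U_k,\ \forall k\}$ ser uma tal vizinhança, e depois reconstruir $a$ através do termo $(n+1)$-ário $\theta$, cujos argumentos $\alpha_k(a,b)$ e $b$ ficam todos controlados.
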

\begin{proof}
Basta verificar regularidade numa vizinhança de cada constante, $e_i$. Seja $V$ uma vizinhança aberta de $e_i$. Dado que $\theta_{e_i}: A^{n+1} \to A$ é contínua, $$\theta_{e_i}(U_1\times \cdots \times U_n\times U) \subseteq V,$$ onde $U_k\in \mathcal{V}_{e_k}$ e $U\in \mathcal{V}_{e_i}.$ 
Mostre-se que $\overline{U}\subseteq V.$ Seja $a\in \overline{U}.$ Como o conjunto $$\bigcap_{k=1}^{n} (\alpha_k(-,a))^{-1} (U_k)$$ é aberto, a intersecção $U \bigcap (\alpha_k(-,a))^{-1} (U_k)$ não é vazia. Seja $b$ um elemento da intersecção, como $\alpha_k(a,b)\in U_k,$ $$a=\theta_{e_i} (\alpha_1(a,b),\ldots,\alpha_n(a,b),b) \in \theta_{e_i} (U_1\times \cdots \times U_n\times U) \subseteq V.$$ 
\end{proof}

\begin{pro}
Sejam $\Teoria$ uma teoria protomodular e $A$ uma álgebra sobre $\Teoria.$ Os conjuntos $\{e_i\}$ são fechados se e só se $A$ for de Hausdorff.
\end{pro}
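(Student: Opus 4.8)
The plan is to reduce the Hausdorff property to closedness of the diagonal $\Delta_A\subseteq A\times A$ and then to realise $\Delta_A$ as the preimage of the single point $(e_1,\dots,e_n)$ under a continuous map assembled from the terms $\alpha_i$. Recall that a space is Hausdorff precisely when its diagonal is closed in the square, so it suffices to control $\Delta_A$.

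The reverse implication is immediate: if $A$ is Hausdorff it is in particular $T_1$, hence every singleton is closed, and so each $\{e_i\}$ is closed.

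For the forward implication I would assume that each $\{e_i\}$ is closed and introduce the map $\psi:A\times A\to A^n$, $(x,y)\mapsto(\alpha_1(x,y),\dots,\alpha_n(x,y))$. This $\psi$ is continuous because each $\alpha_i$ is a term operation of the topological algebra, hence continuous on $A\times A$. The heart of the argument is the identity $\psi^{-1}\{(e_1,\dots,e_n)\}=\Delta_A$. The inclusion $\Delta_A\subseteq\psi^{-1}\{(e_1,\dots,e_n)\}$ is exactly the axiom $\alpha_i(x,x)=e_i$. For the reverse inclusion I would exploit the term $\theta$: specialising the defining identity $\theta(\alpha_1(x,y),\dots,\alpha_n(x,y),y)=x$ to $x=y$ gives $\theta(e_1,\dots,e_n,y)=y$, and then, whenever $\psi(x,y)=(e_1,\dots,e_n)$, the same identity reads $x=\theta(e_1,\dots,e_n,y)=y$.

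With this identification the topology is a formality: since each $\{e_i\}$ is closed, the point $\{(e_1,\dots,e_n)\}=\prod_{i}\{e_i\}$ is closed in $A^n$ (a finite product of closed sets), so $\Delta_A=\psi^{-1}\{(e_1,\dots,e_n)\}$ is closed by continuity of $\psi$, whence $A$ is Hausdorff. The only step requiring care — and the one where protomodularity genuinely enters — is the set-theoretic equality $\psi^{-1}\{(e_1,\dots,e_n)\}=\Delta_A$, since it is the term $\theta$ that forces two points with all coordinates $\alpha_i$ equal to $e_i$ to coincide; everything topological afterwards is routine.
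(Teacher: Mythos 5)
Your proof is correct and complete, but it takes a genuinely different route from the one in the text. The paper's proof is a one-line sketch of the forward direction only: closedness of each $\{e_i\}$ gives $T_1$ (via the identity $\{a\}=\bigcap_{i}\alpha_i(-,a)^{-1}(\{e_i\})$, which follows from the same $\theta$-computation you use), and $T_1$ together with the regularity of $A$ established in the immediately preceding proposition yields Hausdorff; the trivial converse is left implicit. You instead bypass regularity entirely: you characterise Hausdorff by closedness of the diagonal and exhibit $\Delta_A$ as $\psi^{-1}(\{(e_1,\dots,e_n)\})$ for the continuous map $\psi=(\alpha_1,\dots,\alpha_n):A\times A\to A^n$, the key set-theoretic equality resting exactly on the protomodularity identities $\alpha_i(x,x)=e_i$ and $\theta(\alpha_1(x,y),\dots,\alpha_n(x,y),y)=x$. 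Your argument is more self-contained and elementary --- it needs nothing beyond continuity of term operations and the closed-diagonal criterion --- whereas the paper's argument, at the cost of invoking the regularity proposition, delivers the stronger conclusion that $A$ is in fact $T_3$ (regular Hausdorff). Both proofs ultimately hinge on the same algebraic fact, namely that $\alpha_i(x,y)=e_i$ for all $i$ forces $x=y$; the paper applies it pointwise to obtain $T_1$, while you apply it globally to identify the diagonal.
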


\begin{proof}
($\Rightarrow$) $\{e_i\}$ fechado $\Rightarrow$ $T_1+$regular $\Rightarrow$ de Hausdorff.
\end{proof}

\begin{tma}
Seja $B\leq A$ uma subálgebra sobre uma teoria protomodular $\Teoria$. Se $B$ for aberta como subespaço de $A$ então $B$ é fechada também.
\end{tma}

\begin{proof}
Seja $a \in A \setminus B.$ Define-se um subconjunto especial de $A$, $$U = \bigcap_{i} \alpha_i(a, -)^{-1}(B).$$ $a$ pertence a $U$ porque para $1\leq i \leq n$, $\alpha_i(a,a)=e_i\in B.$ Se existisse $b\in U\cap B$ então $a=\theta(\alpha_1(a,b),\ldots,\alpha_n(a,b),b))\in B.$
\end{proof}

\begin{tma}
Seja $B\leq A$ uma subálgebra sobre $\Teoria$, então, $\overline{B}$ também é uma subálgebra sobre $\Teoria.$
\end{tma}

\begin{proof}
Seja $\tau$ uma operação de aridade $n,$ em $A.$ Mostra-se que se restringe de modo natural para a operação em $\overline{B},$ $$\tau(\overline{B}^n) = \tau (\overline{B^n}) \subset \overline{\tau(B^n)} \subseteq \overline{B}.$$
\end{proof}

\section{Functores Topológicos}
Nesta secção pretendemos discutir como se constroem os colimites em $\Top^\Teoria,$  most\-ran\-do que é cocompleta. Para alcançar esse objectivo apresenta-se sumariamente a noção de functor topológico, o qual reflecte colimites.

\begin{defc}
Seja $\C$ uma categoria. Uma {\it fonte} em $\C$ é um par da forma \hbox{$(X,(f_i:X\to X_i)_{i\in I})$}. A fonte $(X,(f_i:X\to X_i)_{i\in I})$ diz-se {\it inicial} relativamente ao functor \hbox{$F: \C \to \mathbf{D}$} se, para cada outra fonte $(X',(f'_i:X'\to X_i)_{i\in I})$ e morfismo  \hbox{$g:FX'\to FX$} que torna comutativo (para todo o $i\in I$) o diagrama seguinte,
\[
\begin{tikzpicture}
\matrix (m) [matrix of math nodes, row sep=1.5cm, column sep=1.5cm]{& &FX_i \\ FX' & FX & \\};
\path[ar] (m-2-1) edge node[auto,swap]{$g$} (m-2-2) 
(m-2-2) edge node[auto,swap]{$Ff_i$} (m-1-3);
\path[ar]  (m-2-1) edge[out=50, in=180] node[auto]{$Ff'_i$} (m-1-3);
\end{tikzpicture}
\]
existir um morfismo único $\gamma:X'\to X$ que satisfaça $F\gamma = g$ e que o torne comutativo 
\[
\begin{tikzpicture}
\matrix (m) [matrix of math nodes, row sep=1.5cm, column sep=1.5cm]{& &X_i \\ X' & X & \\};
\path[ar] (m-2-1) edge node[auto,swap]{$\exists!\gamma$} (m-2-2) 
(m-2-2) edge node[auto,swap]{$f_i$} (m-1-3);
\path[ar]  (m-2-1) edge[out=50, in=180] node[auto]{$f'_i$} (m-1-3);
\end{tikzpicture}
\]
para todo o $i\in I.$
\end{defc}

O conceito de functor topológico sugere a noção de aplicação inicial nas aplicações contínuas entre espaços topológicos.

\begin{defc}
Um functor $F:\C\to \mathbf{D}$ diz-se topológico quando, para toda a fonte em $\mathbf{D}$ da forma $$(D, (d_i:D\to FX_i)_{i\in I})$$ para uma família específica $(X_i)_{i\in I}$ de objectos em $\C$, existir uma fonte inicial $(X, (\delta_i:X\to X_i)_{i\in I})$ relativa ao functor $F$,  e um isomorfismo $\lambda:D\to FX$ com $F\delta_i\circ \lambda = d_i,$ como no diagrama:
\[
\begin{tikzpicture}
\matrix (m) [matrix of math nodes, row sep=2cm, column sep=2cm]{& FX_i & \\ FX' & FX & D. \\};
\path[ar] 
(m-2-1) edge node[auto]{$F\delta_i'$} (m-1-2)
(m-2-1) edge node[auto,swap]{$g$} (m-2-2)
(m-2-2) edge node[auto]{$F\delta_i$} (m-1-2)
(m-2-3) edge node[auto]{$\lambda$} node[auto,swap]{$\cong$} (m-2-2)
(m-2-3) edge node[auto,swap]{$d_i$} (m-1-2);
\end{tikzpicture}
\]
\end{defc} 

Apresentamos os resultados suficientes para alcançar o nosso objectivo: mostrar que $\Top^\Teoria$ é cocompleta. Demonstrações completas são omitidas.

\begin{lem}
Seja $F:\C\to \mathbf{D}$ um functor topológico. Se $\mathbf{D}$ for cocompleta, $\C$ igualmente o será.
\end{lem}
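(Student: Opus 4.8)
The plan is to transport colimits from $\mathbf{D}$ up to $\mathbf{C}$ along $F$. The crucial input is the dual of the defining property: whereas the definition equips $F$ with initial lifts of structured sources, a topological functor equally admits \emph{final lifts of sinks}. This is the topological duality theorem, namely that $F$ is topological if and only if $F^{\mathrm{op}}\colon\mathbf{C}^{\mathrm{op}}\to\mathbf{D}^{\mathrm{op}}$ is topological; I would invoke it (its proof omitted, as elsewhere in this section) together with the standard fact that every topological functor is faithful. Concretely, for a sink $(d_j\colon FX_j\to D)_{j\in J}$ structured over a family $(X_j)$ of objects of $\mathbf{C}$ there is an object $X$ with an isomorphism $FX\cong D$ and morphisms $\delta_j\colon X_j\to X$ lifting the $d_j$ that are \emph{final}: any $\mathbf{D}$-morphism $g\colon D\to FY$ for which each $g\circ d_j$ has the form $Fh_j$ admits a unique $\mathbf{C}$-lift $\bar g\colon X\to Y$ with $\bar g\circ\delta_j=h_j$.

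Granting this, let $H\colon\mathcal{J}\to\mathbf{C}$ be a small diagram. First I would form $FH\colon\mathcal{J}\to\mathbf{D}$ and, using cocompleteness of $\mathbf{D}$, take its colimit $(L,(\kappa_j\colon FH_j\to L)_{j})$. The family $(\kappa_j)$ is a sink structured over the objects $H_j$, so the final-lift property produces an object $C$ with $FC\cong L$ and morphisms $\gamma_j\colon H_j\to C$ lifting the $\kappa_j$. To see that $(\gamma_j)$ is a cocone on $H$, I would check, for each arrow $u\colon i\to j$ of $\mathcal{J}$, that $\gamma_j\circ Hu=\gamma_i$: applying $F$ and using that $(\kappa_j)$ is a colimit cocone gives $F(\gamma_j\circ Hu)=\kappa_j\circ FHu=\kappa_i=F\gamma_i$, and faithfulness of $F$ then yields the equality in $\mathbf{C}$.

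It remains to verify the universal property of $(C,(\gamma_j))$. Given any cocone $(Y,(f_j\colon H_j\to Y)_{j})$ in $\mathbf{C}$, applying $F$ gives a cocone $(FY,(Ff_j))$ on $FH$ in $\mathbf{D}$, and cocompleteness of $\mathbf{D}$ produces a unique $g\colon L\to FY$ with $g\circ\kappa_j=Ff_j$. Since each $g\circ\kappa_j=Ff_j$ visibly lifts, finality of the sink $(\gamma_j)$ supplies a unique $\bar g\colon C\to Y$ with $F\bar g=g$ and $\bar g\circ\gamma_j=f_j$; this is the required factorisation. For uniqueness, any $h\colon C\to Y$ with $h\circ\gamma_j=f_j$ satisfies $Fh\circ\kappa_j=Ff_j$, whence $Fh=g$ by uniqueness of $g$ in $\mathbf{D}$, and then $h=\bar g$ by the uniqueness clause of the final lift. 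Taking $\mathcal{J}$ empty recovers an initial object, so $\mathbf{C}$ has all small colimits.

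The main obstacle is precisely the first paragraph: justifying that a functor with initial lifts of sources also has final lifts of sinks. Everything after that is a routine diagram chase resting only on the universal property of the colimit in $\mathbf{D}$ and the faithfulness of $F$; the one point needing care is the bookkeeping of the isomorphism $\lambda\colon FC\to L$ in place of a strict equality $FC=L$, which the definition of topological functor used in the text forces upon us, but this only conjugates the morphisms $\kappa_j$ by $\lambda$ and does not affect the argument.
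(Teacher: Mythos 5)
Your argument is correct, but it is worth noting that the paper does not actually prove this lemma at all: its ``proof'' is a one-line citation to Herrlich's \emph{Topological functors}, Corollary 6.4. What you have written is essentially the standard proof of that cited corollary, so you have filled in the content rather than matched an existing argument. The comparison is therefore about where the unproved input sits. You correctly isolate the real substance as the self-duality of topological functors (equivalently, the existence of final lifts of structured sinks), and everything downstream of that --- forming the colimit of $FH$ in $\mathbf{D}$, taking the final lift of the colimit cocone, and verifying the cocone and universal-property conditions via faithfulness of $F$ --- is a routine and correctly executed diagram chase, including the bookkeeping for the isomorphism $\lambda:D\to FX$ that the paper's definition forces in place of an equality. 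Two small remarks: first, the duality theorem genuinely requires that initial lifts exist for sources indexed by arbitrary (possibly large) classes, which the paper's definition does permit, so your invocation is legitimate; citing it without proof is consistent with the section's stated policy and is no worse than the paper's own citation. Second, in the universal-property step the equality $\bar g\circ\gamma_j=f_j$ is most cleanly obtained from faithfulness applied to $F(\bar g\circ\gamma_j)=g\circ\kappa_j=Ff_j$, rather than being part of what finality ``supplies''; you have faithfulness available, so this is a presentational point only.
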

\begin{proof}
Este lema encontra-se em \cite{Herrlich} onde é designado por Corolário 6.4.
\end{proof}

\begin{tma}
O functor de esquecimento $U:\Top^\Teoria \to \Conj^\Teoria$ é topológico.
\end{tma}

\begin{proof}
Seja $(S,(g_i:S\to UX_i)_{i\in I})$ uma fonte em $\Conj^\Teoria$ onde $(X_i)_i$ são álgebras topológicas e $S \in \Conj^\Teoria$. Na definição de functor topológico, basta tomar $(X,(\tilde{g}_i:X\to X_i)_{i\in I})$ onde $X$ é $S$ munido da topologia quociente dos $g_i,$ e os $\tilde{g}_i$ são múnidos da estrutura de morfismo de álgebras topológicas.
\end{proof}

\begin{cor}
Seja $\Teoria$ uma teoria algébrica. A variedade de álgebras topológicas $\Top^\Teoria$ é cocompleta e os seus colimites são construidos usando os colimites em $\Conj^\Teoria.$
\end{cor}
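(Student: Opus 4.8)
O plano é deduzir o resultado combinando os dois resultados imediatamente anteriores com a cocompletude da variedade algébrica de base. Primeiro, recordaria que $\Conj^\Teoria = \Alg(\Teoria)$ é cocompleta: sendo uma variedade de álgebras universais, possui todos os colimites --- os coprodutos são produtos livres e os co-igualizadores obtêm-se como quocientes pela congruência gerada, tal como se esboçou no caso de $\GrpTop$. Pelo Teorema anterior, o functor de esquecimento $U:\Top^\Teoria \to \Conj^\Teoria$ é topológico; pelo Lema anterior, um functor topológico sobre uma categoria cocompleta força a categoria de cima a ser cocompleta. Isto estabelece a primeira metade do enunciado.

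Para a segunda metade --- que os colimites em $\Top^\Teoria$ se constroem a partir dos de $\Conj^\Teoria$ --- usaria o facto de um functor topológico levantar colimites através de estruturas finais. Concretamente, dado um diagrama $D:J\to \Top^\Teoria$, formaria o colimite $C=\mathrm{colim}\,(U\circ D)$ em $\Conj^\Teoria$, com co-cone $(\kappa_j:UD_j\to C)_{j\in J}$. Este co-cone é a estrutura dual de uma fonte (um sorvedouro), e a propriedade de functor topológico fornece um levantamento final: um objecto $\widehat C\in\Top^\Teoria$ com $U\widehat C=C$, munido da topologia final relativamente aos $\kappa_j$. Verificaria então que $(\widehat C,(\kappa_j))$ é o colimite de $D$: dado qualquer outro co-cone com vértice em $\Top^\Teoria$, a universalidade de $C$ em $\Conj^\Teoria$ produz um único homomorfismo de álgebras, e a universalidade do levantamento final garante que esse homomorfismo é contínuo.

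A parte subtil é a passagem de fontes iniciais para sorvedouros finais. A definição de functor topológico dada no texto está formulada em termos de levantamentos iniciais de fontes, adequados à construção de limites; para colimites é necessária a versão dual, os levantamentos finais de sorvedouros. O ponto central, que invocaria como facto clássico (cf. \cite{Herrlich}), é que $U$ é topológico se e só se o seu oposto $U^{\mathrm{op}}$ o for, pelo que a existência de levantamentos iniciais de todas as fontes equivale à existência de levantamentos finais de todos os sorvedouros. É esta auto-dualidade dos functores topológicos que transforma a cocompletude de $\Conj^\Teoria$ na de $\Top^\Teoria$ e que fornece a descrição explícita dos colimites.
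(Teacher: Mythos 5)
A tua demonstração segue exactamente o caminho que o texto pretende: combinar o lema (um functor topológico sobre uma categoria cocompleta torna a categoria de cima cocompleta) com o teorema de que $U:\Top^\Teoria\to\Conj^\Teoria$ é topológico, juntamente com a cocompletude de $\Conj^\Teoria$. O detalhe adicional que forneces --- a construção explícita dos colimites por levantamentos finais de sorvedouros e a auto-dualidade dos functores topológicos (cf.\ \cite{Herrlich}) --- é correcto e preenche precisamente o conteúdo que o texto deixa implícito ao remeter para o Corolário 6.4 de Herrlich.
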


É demonstrado em \cite{Wyler} que $\Top^\Teoria$ é cocompleta (e completa) envolvendo functores adjuntos. Todavia, esta abordagem é bastante abstracta e não mostra uma forma directa de calcular os limites em $\Top^\Teoria.$

\section{Operação de Maltsev}

Nas álgebras que pretendemos estudar existem operações particulares que permitem generalizar resultados de $\GrpTop.$ 
\begin{defc}
Uma {\it operação de Maltsev} é uma aplicação $$p: X^3 \to X$$ tal que $p(x,x,y) = y;\ p(y,y,z) = z.$

Uma variedade de álgebras universais diz-se {\it de Maltsev} se existir uma operação de Maltsev nas operações da sua teoria.
\end{defc}
Em \cite{Johnstone} é demonstrado que em álgebras topológicas com uma operação de Maltsev muitos resultados que vimos anteriormente são ainda válidos.

Incluímos um resultado clássico.
\begin{pro}As condições seguintes equivalem-se, para uma variedade $\mathcal{V}:$
\begin{enumerate}
\item $\mathcal{V}$ ser de Maltsev;
\item Toda a relação reflexiva em $\mathcal{V}$ ser de equivalência;
\item $RR'=R'R$ para todas as relações de equivalência sobre cada objecto de $\mathcal{V}$.
\end{enumerate}
\end{pro}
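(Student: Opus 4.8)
The plan is to prove the three conditions equivalent by running the cycle $(1)\Rightarrow(2)\Rightarrow(3)\Rightarrow(1)$. The guiding principle throughout is to regard a compatible relation on an object $A$ as a subalgebra $R\subseteq A\times A$, so that every operation of the theory acts on pairs coordinatewise; this is what lets me manufacture new related pairs by feeding known ones into terms. I write the Maltsev identities in the usual form $p(x,x,y)=y$ and $p(x,y,y)=x$.

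For $(1)\Rightarrow(2)$, let $R\subseteq A\times A$ be a reflexive (compatible) relation and let $p$ be a Maltsev operation. To obtain symmetry, given $(x,y)\in R$ I apply $p$ coordinatewise to the three pairs $(x,x),(x,y),(y,y)$, all of which lie in $R$ by reflexivity and hypothesis; the outcome is $\bigl(p(x,x,y),p(x,y,y)\bigr)=(y,x)$, so $(y,x)\in R$. For transitivity, given $(x,y),(y,z)\in R$ I apply $p$ to $(x,y),(y,y),(y,z)$ and obtain $\bigl(p(x,y,y),p(y,y,z)\bigr)=(x,z)\in R$. Hence $R$ is reflexive, symmetric and transitive, i.e.\ an equivalence relation.

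For $(2)\Rightarrow(3)$, let $R,R'$ be two equivalence relations on a fixed object. Their relational composite $R\circ R'$ contains the diagonal, and it is a compatible relation because the composite of two subalgebras of $A\times A$ is again a subalgebra (a witness for $\bigl(\omega(\bar a),\omega(\bar c)\bigr)$ is produced by applying $\omega$ to the chosen witnesses). By hypothesis $(2)$, $R\circ R'$ is therefore an equivalence relation, in particular symmetric. Since $R$ and $R'$ are themselves symmetric, $(R\circ R')^{\mathrm{op}}=R'^{\mathrm{op}}\circ R^{\mathrm{op}}=R'\circ R$, while symmetry of $R\circ R'$ gives $(R\circ R')^{\mathrm{op}}=R\circ R'$; combining these yields $R\circ R'=R'\circ R$, which is exactly permutability.

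For $(3)\Rightarrow(1)$ — the step I expect to carry the real weight — I pass to the free algebra $F=F_{\mathcal V}(x,y,z)$ on three generators and consider the congruences $\theta$ generated by $(x,y)$ and $\eta$ generated by $(y,z)$. Then $x\mathrel\theta y\mathrel\eta z$, so $(x,z)\in\theta\circ\eta$; permutability forces $(x,z)\in\eta\circ\theta$, meaning there is an element $s=s(x,y,z)\in F$ with $x\mathrel\eta s$ and $s\mathrel\theta z$. Reading these two memberships inside the quotients $F/\eta$ and $F/\theta$ gives $s(x,y,y)=x$ and $s(x,x,z)=z$, exhibiting $s$ as a Maltsev operation. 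The main obstacle, and the point that demands care, is precisely this last translation: I must justify that $F/\theta$ and $F/\eta$ are again free algebras, each on the two identified generators, so that an equation merely \emph{witnessed} by the single element $s$ in a quotient is in fact an \emph{identity} of the whole variety $\mathcal V$, and not just a relation holding in $F$.
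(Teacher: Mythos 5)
Your proof is correct and runs the same cycle $(1)\Rightarrow(2)\Rightarrow(3)\Rightarrow(1)$ as the paper, but two of the three legs are realized differently enough to be worth comparing. In $(1)\Rightarrow(2)$ you verify both symmetry and transitivity, whereas the paper only writes out the symmetry computation. In $(2)\Rightarrow(3)$ your argument---apply hypothesis $(2)$ to the reflexive compatible relation $R\circ R'$, deduce that it is symmetric, and combine with $(R\circ R')^{\mathrm{op}}=R'\circ R$---is the standard sound one; the paper instead gives a chain of equivalences that only invokes the symmetry of $R$ and $R'$ and silently switches the order of composition, so it never actually uses hypothesis $(2)$: your version is the one to keep. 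For $(3)\Rightarrow(1)$ the paper works with the kernel pairs $R[f]$ and $R[g]$ of the two retractions $L(x,y,z)\to L(b,c)$ and $L(x,y,z)\to L(a,b)$ identifying $x$ with $y$ and $y$ with $z$ respectively; these are exactly your congruences $\theta$ and $\eta$, but phrasing them as kernel pairs of surjections onto genuinely free two-generator algebras dissolves the obligation you flag at the end: the memberships $(x,m)\in R[g]$ and $(m,z)\in R[f]$ literally read $p(a,b,b)=a$ in $L(a,b)$ and $p(b,b,c)=c$ in $L(b,c)$, which are identities of $\mathcal{V}$ because those algebras are free. If you keep your formulation, the lemma you still owe---that $F(x,y,z)/\mathrm{Cg}(y,z)$ is free on the two remaining generators---is true and standard (both sides satisfy the same universal property, as one checks by producing mutually inverse maps on generators), but the kernel-pair phrasing lets you avoid proving it.
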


\begin{proof}
(1. $\Rightarrow$ 2.) Sejam $\Teoria$ uma teoria e seja $p$ um termo de Maltsev relacionado com $\Teoria.$  Uma relação $R$ em $\Conj^\Teoria$ é um subobjecto $R\rightarrowtail X\times X$ onde $X$ é uma álgebra sobre $\Teoria.$ Por $R$ ser reflexiva, $(x,x),(y,y)\in R,$ portanto se $(x,y)\in R$ tem-se $(p(x,x,y),p(x,y,y))= (y,x)\in R.$

(2.$\Rightarrow$ 3.) Sejam $R$ e $R'$ relações de equivalência sobre um objecto $X,$ então:
\begin{align*}
(x,z)\in RR' &\Leftrightarrow (z,x) \in R'R\\
&\Leftrightarrow (z,y) \in R,\ (y,x)\in R'\text{ para algum }y\in X\\
&\Leftrightarrow (x,y) \in R',\ (y,z)\in R\text{ para algum }y\in X\\
&\Leftrightarrow (x,z) \in R'R.
\end{align*}

(3.$\Rightarrow$1.) Digamos que $L(S)$ é a álgebra sobre $\Teoria$ livre gerada pelo conjunto de símbolos $S.$ Mostre-se que $\Teoria$ é de Maltsev. Sejam $f:L(x,y,z) \to L(b,c)$ e $g:L(x,y,z) \to L(a,b)$ os morfismos definidos por $f(x)=f(y) = b,\ f(z)=c,$ e $g(x)=a,\ g(y)=g(z)=b.$ Os seus pares núcleos definem uma operação de Maltsev: 
\begin{align*}
(x,y)\in R[f],\ (y,z)\in R[g] &\Leftrightarrow (x,z)\in R[g]R[f]\\
& \Leftrightarrow \exists m \ (x,m)\in R[g],\ (m,y)\in R[f].
\end{align*}
Identificando $m=p(x,y,z)$ provém das propriedade de $f$ e $g$ que $p(x,y,z)$ é de Maltsev.
\end{proof}

Generalizando para quaisquer categorias:
\begin{defc}
Uma categoria é de Maltsev se for finitamente completa e cada relação reflexiva for de equivalência. 
\end{defc}





\section{Quocientes}
Começamos por provar um teorema de Maltsev (em \cite{Maltsev}).
\begin{pro}
\label{l0}
Seja $\Teoria$ uma teoria que contenha uma operação de Maltsev entre os termos de $\Teoria.$ Se $q:X\twoheadrightarrow Q$ for um epimorfismo regular em $\Top^\Teoria$ é um morfismo aberto.
\end{pro}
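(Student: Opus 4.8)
The plan is to reduce openness of $q$ to openness of a projection out of its kernel pair, and then to settle that projection using the Maltsev term. First I recall how regular epimorphisms look in $\Top^\Teoria$: by the results of the previous section the forgetful functor to $\Conj^\Teoria$ is topological, so it lifts colimits; hence the underlying map $|q|$ is a surjective homomorphism and $Q$ carries the final (quotient) topology with respect to $q$. Consequently a subset of $Q$ is open precisely when its $q$-preimage is open in $X$, and since $q$ is surjective this means that $q$ is an open map if and only if for every open $U\subseteq X$ the saturation $q^{-1}(q(U))$ is open in $X$.

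Next I would form the kernel pair $(R[q],\pi_1,\pi_2)$. A topological functor creates limits (just as the fibre products in $\GrpTop$ were described earlier), so $R[q]$ is the subspace $\{(a,b)\in X\times X : q(a)=q(b)\}$ of $X\times X$ with the projections as its two maps. A direct set-theoretic computation then gives $q^{-1}(q(U))=\pi_1(\pi_2^{-1}(U))$: an element $a$ lies in the saturation iff some $b\in U$ satisfies $q(a)=q(b)$, i.e. $(a,b)\in R[q]$ with $b\in U$. Since $\pi_2$ is continuous, $\pi_2^{-1}(U)$ is open in $R[q]$, so it suffices to prove that $\pi_1\colon R[q]\to X$ is an \emph{open} map; by symmetry the same will hold for $\pi_2$.

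The main step is therefore the openness of $\pi_1$, and here the Maltsev term $p$ enters. Fix an open $W\subseteq R[q]$ and a point $(a_0,b_0)\in W$; shrinking $W$ to a basic open box I may assume $W=(A\times B)\cap R[q]$ with $a_0\in A$ and $b_0\in B$ open in $X$. The idea is to manufacture, for each $a$ near $a_0$, a partner $b\in B$ with $q(a)=q(b)$. Set $b:=p(b_0,a_0,a)$, which is continuous in $a$ because $p$ is an operation of the topological algebra. Using the identity $p(x,y,y)=x$ we get $b=b_0$ when $a=a_0$, so by continuity there is an open $A'$ with $a_0\in A'\subseteq A$ and $p(b_0,a_0,a)\in B$ for all $a\in A'$. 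Since $q$ is a homomorphism and $q(a_0)=q(b_0)$, the other identity $p(x,x,y)=y$ yields $q(b)=p(q(b_0),q(a_0),q(a))=p(q(b_0),q(b_0),q(a))=q(a)$, so $(a,b)\in W$ and hence $a\in\pi_1(W)$. Thus $A'\subseteq\pi_1(W)$, proving $\pi_1(W)$ open, and the proposition follows.

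The hard part is exactly this last step, and it is worth flagging why the obvious strategy fails. For topological groups one shows $q$ open by translating: $q^{-1}(q(U))=\bigcup_n Un$ is a union of homeomorphic copies of $U$. The analogue here would be a ``translation'' map such as $y\mapsto p(x_0,u_0,y)$, but for a general Maltsev operation such maps need not be invertible, let alone homeomorphisms, so they do not move open sets to open sets. The device that rescues the argument is to use $p$ not to translate a whole neighbourhood at once, but to continuously \emph{correct} the fixed second coordinate $b_0$ into a valid $R[q]$-partner for each nearby $a$; this requires only continuity of $p$ together with its two defining identities, and so works in every Maltsev variety of topological algebras.
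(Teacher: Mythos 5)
Your proof is correct and rests on exactly the same mechanism as the paper's: the paper argues directly that the saturation $q^{-1}(q(U))$ is open by noting that $v\mapsto p(v,x,u)$ is continuous, sends $x$ into $U$, and preserves $q$-fibres by the two Maltsev identities, which is precisely your key computation with the arguments of $p$ permuted. Your extra reduction through $q^{-1}(q(U))=\pi_1(\pi_2^{-1}(U))$ and the openness of the kernel-pair projection is a sound but inessential repackaging of the same idea.
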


\begin{proof}
Seja $U$ um aberto e considere-se $x\in q^{-1}q(U).$ Existe $u\in U$ com $q(u)=q(x),$ e como $p(x,x,u)=x$ e $p:X^3\to X$ é contínua (e $p(-,x,u)$ é em particular uma aplicação contínua entre espaços topológicos) existe uma vizinhança $V\in \mathcal{V}_x$ tal que $p(V,x,u)\subset U.$ Verifica-se facilmente que $V\subseteq q^{-1}(q(U)),$ dando o pretendido.
\end{proof}

\begin{tma}
Quando $\Teoria$ for protomodular, os epimorfismos regulares em $\Top^\Teoria$ serão exactamente os morfismos sobrejectivos e abertos.
\label{l1}
\end{tma}

\begin{proof}
Como $\Teoria$ admite uma operação de Maltsev, a proposição anterior diz-nos que os epimorfismos regulares de $\Top^\Teoria$ são abertos. Através do modo de construção, é visível que um co-igualizador topológico é sobrejectivo.
\[
\begin{tikzpicture}
\matrix (m) [matrix of math nodes, column sep=1.2cm,row sep=1.2cm,text height=1.5ex, text depth=0.25ex]{R[f] & X & Y\\&&Z.\\};
\path[ar]
([yshift=.5mm]m-1-1.east) edge node[auto]{$\pi_1$} ([yshift=.5mm]m-1-2.west)
([yshift=-.5mm]m-1-1.east) edge node[auto,swap]{$\pi_2$} ([yshift=-.5mm]m-1-2.west)
(m-1-2) edge node[auto]{$f$} (m-1-3)
(m-1-2) edge node[auto,swap]{$z$} (m-2-3);
\path[ar,dashed] (m-1-3) edge node[auto]{$g$} (m-2-3);
\end{tikzpicture}
\]

A necessidade da condição prova-se mostrando que um morfismo aberto e sobrejectivo $f: X\to Y$ é o co-igualizador do seu par núcleo. Se $z:X\to Z$ for outro morfismo com $\pi_1\circ f= \pi_2\circ f$ basta provar-se que a função entre os conjuntos subjacentes é contínua. Essa afirmação é válida porque para cada aberto $U$ de $Z,$ $f^{-1}(g^{-1}(U)) = z^{-1}(U)$ é aberto, logo, porque $f$ é aberto e sobrejectivo $g^{-1}(U) = f(f^{-1}(g^{-1}(U)))$ é aberto.
\end{proof}

\begin{tma}
\label{l2}
A variedade $\Top^\Teoria$ é regular para toda a teoria protomodular $\Teoria.$
\end{tma}
\begin{proof}
$\Top^\Teoria$ tem todos os co-igualizadores e um co-igualizador é uma aplicação que é contínua, aberta e sobrejectiva. Seja $f:X\to Z$ um epimorfismo regular e seja o quadrado a seguir um produto fibrado:
\[
\begin{tikzpicture}
\matrix (m) [matrix of math nodes, column sep=1.5cm, row sep= 1.5cm,text height=1.5ex, text depth=0.25ex] {X\times_Z Y & Y \\ X & Z.\\};
\path[ar] (m-1-1) edge node[auto]{$\pi_2$}(m-1-2)
(m-1-1) edge node[auto,swap]{$\pi_1$}(m-2-1)
(m-1-2) edge node[auto]{$g$}(m-2-2)
(m-2-1) edge node[auto,swap]{$f$}(m-2-2);
\end{tikzpicture}
\]
Mostre-se que $\pi_2$ é um epimorfismo regular também. Basta fazer isso na categoria $\Top.$ Como os epimorfismos são estáveis para produtos fibrados, $\pi_2$ é sobrejectivo. Uma base da topologia de $X\times_Z Y$ são os conjuntos $U=\pi_1^{-1}(A) \cap \pi_2^{-1}(B),$ onde $A\subseteq X$ e $B\subseteq Z$ são abertos. O conjunto $$\pi_2(U) = \{y\in B | \exists x\in A: f(x)=g(y)\} = B\cap g^{-1}(f(A))$$ é um aberto ($f$ é uma aplicação aberta). Como $\pi_2(U)$ é aberto para todos os elementos de uma base de $X\times_Z Y$, $\pi_2$ é uma aplicação aberta.

\end{proof}

\chapter{Produtos semidirectos}
O objectivo deste capítulo centra-se na caracterização dos produtos semidirectos em variedades das álgebras topológicas. Recorda-se primeiramente a noção de produto semidirecto em $\Grp$ e extrai-se as suas propriedades. Em seguida, introduz-se a noção de functor monádico, que é utilizado para definir os produtos semidirectos em categorias como em \cite{Bourn3}. Conclui-se averiguando que um resultado em \cite{Inyangala} para $\Alg(\Teoria)$ é ainda válido para álgebras topológicas.

\section{Produtos semidirectos em $\Grp$}
Recorde-se a definição do produto directo em $\Grp:$ Um grupo $G$ diz-se um produto directo se tiver dois subgrupos normais: $N,N'\lhd G,$ tal que $N \cap N' = \{e\}$ e $N N'= G.$ O conceito do produto semidirecto provém de um relaxamento desta definição--requerendo somente que um dos subgrupos seja normal. 

\begin{defc}
Um grupo $G$ diz-se um {\it produto semidirecto} se tiver dois subgrupos $K,Q\leq G$ com um subgrupo normal $K$ que se {\it complementam}, i.e., $K\cap Q=\{e\}$ e $KQ=G.$
\end{defc}

\begin{tma}
O grupo $G$ é o produto semidirecto de $K$ por $Q$ se e só se existir a sucessão exacta curta cindida em $\Grp$:
\begin{equation} 
\begin{tikzpicture}[text height=1.5ex,text depth=.25ex]
\matrix (m) [matrix of math nodes, column sep=1.7cm]{K & G & Q,\\};
\path[ar] ([yshift=-.5mm]m-1-3.west) edge node[auto]{$s$} ([yshift=-.5mm]m-1-2.east);
\path[nar] (m-1-1) edge node[auto]{$k$} (m-1-2);
\path[rar] ([yshift=.5mm]m-1-2.east) edge node[auto]{$q$} ([yshift=.5mm]m-1-3.west);
\end{tikzpicture}\label{thisone}
\end{equation}
\end{tma}

\begin{proof}
Supondo que $k$ e $s$ sejam inclusões de grupos, e que $G$ é o produto semidirecto de $K$ por $Q$ prova-se que \ref{thisone} é exacta. Por definição, $K$ é um núcleo, pelo que basta mostrar que $Q\cong G/K$. Seja $\pi: G \to G/K$ a projecção canónica, e seja $\varpi:Q\to G/K$ a restrição $\pi|_Q$. Como $K\cap Q=\{e\},$ $\varpi$ é injectiva. De facto, $\varpi$ é um isomorfismo, pois é sobrejectivo: se $g\in G$ então $g=ab$ onde $a\in K$ e $b\in Q$, logo $G/K\ni gK= abK=bK\in \im (\varpi).$

Como $\varpi$ é um isomorfismo, defina-se $s$ como $\varpi^{-1}$ cujo codomínio é agora estendido a $G$ (supondo ainda que $Q\leq G).$ Verifica-se, então, que $s$ satisfaz $q\circ s = 1_Q.$

$(\Leftarrow)$ Seja $x\in K\cap Q.$ Como $q\circ k(x) = e,$ $x$ é o elemento neutro. Um elemento $g\in G$ tem sempre a factorização $$g=(s\circ q(g)) \cdot (s\circ q (g^{-1})g).$$ O primeiro factor, $s\circ q (g)$ pertence a $Q$, e o segundo pertence a $K$: $q(s\circ q(g^{-1})g) = q(g^{-1} g) = e.$ Esta factorização é única, pois se $g=a_1b_1=a_2b_2$ com $a_1,a_2\in K$ e $b_1,b_2 \in Q,$ então $a_1^{-1}a_2=b_2^{-1}b_1 \in K\cap Q = \{e\}.$
\end{proof}

\begin{pro}
Dado grupos $K$ e $Q$ e uma acção $\varphi:Q\to \mathrm{Aut}(K)$ quaisquer, existe o produto semidirecto de $K$ por $Q$ tal que para $a\in K$ e $b\in Q$, $ba=\varphi_b(a) b$, que é único a menos de isomorfismo.
\end{pro}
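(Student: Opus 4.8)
O plano é exibir um candidato concreto e depois verificar que satisfaz a definição. Tomaria $G=K\rtimes_\varphi Q$ como sendo o conjunto $K\times Q$ munido da multiplicação
$$(a,b)(a',b')=\bigl(a\,\varphi_b(a'),\,bb'\bigr),$$
e identificaria em $G$ os subgrupos $K\cong\{(a,e_Q):a\in K\}$ e $Q\cong\{(e_K,b):b\in Q\}$. A estratégia é mostrar que $G$ é um grupo, que estes dois subgrupos o tornam num produto semidirecto (recorrendo ao teorema anterior, que caracteriza os produtos semidirectos por sucessões exactas curtas cindidas) e, por fim, que tal grupo é único a menos de isomorfismo.

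Primeiro verificaria os axiomas de grupo. O elemento neutro é $(e_K,e_Q)$ e o inverso de $(a,b)$ é $\bigl(\varphi_{b^{-1}}(a^{-1}),\,b^{-1}\bigr)$, usando que $\varphi_{e_Q}=\id_K$. O passo delicado — e o único cálculo verdadeiramente trabalhoso — é a associatividade: ao comparar $\bigl((a_1,b_1)(a_2,b_2)\bigr)(a_3,b_3)$ com $(a_1,b_1)\bigl((a_2,b_2)(a_3,b_3)\bigr)$ surge a necessidade de $\varphi_{b_1}(a_2\,\varphi_{b_2}(a_3))=\varphi_{b_1}(a_2)\,\varphi_{b_1b_2}(a_3)$, o que requer que cada $\varphi_{b}$ seja um automorfismo (para distribuir $\varphi_{b_1}$ sobre o produto) e que $\varphi$ seja um homomorfismo (para obter $\varphi_{b_1}\circ\varphi_{b_2}=\varphi_{b_1b_2}$). É exactamente aqui que as hipóteses sobre $\varphi$ entram de forma essencial.

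Em seguida observaria que $K$ e $Q$ são subgrupos com $K\cap Q=\{(e_K,e_Q)\}$ e $KQ=G$, pois $(a,b)=(a,e_Q)(e_K,b)$. A projecção $q:G\to Q;\ (a,b)\mapsto b$ é um epimorfismo com secção $b\mapsto(e_K,b)$ e núcleo $K$, produzindo uma sucessão exacta curta cindida; pelo teorema anterior, $G$ é o produto semidirecto de $K$ por $Q$ e, em particular, $K\lhd G$. A relação pedida resulta de um cálculo directo nas identificações: $ba=(e_K,b)(a,e_Q)=(\varphi_b(a),b)=(\varphi_b(a),e_Q)(e_K,b)=\varphi_b(a)\,b.$

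Por fim trataria a unicidade. Se $G_1$ e $G_2$ forem produtos semidirectos de $K$ por $Q$ que induzam a mesma acção $\varphi$ (isto é, em ambos $ba=\varphi_b(a)b$), então cada elemento escreve-se de modo único na forma $ab$ com $a\in K$, $b\in Q$, e a relação força $(a_1b_1)(a_2b_2)=a_1\,\varphi_{b_1}(a_2)\,b_1b_2$ em qualquer um deles. Logo a aplicação $f:G_1\to G_2$ que a cada $ab$ associa o produto correspondente calculado em $G_2$ é um homomorfismo bijectivo, o isomorfismo procurado. O obstáculo principal é, portanto, apenas a verificação cuidadosa da associatividade; tudo o resto é essencialmente formal.
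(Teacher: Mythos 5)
A tua proposta está correcta e segue essencialmente o mesmo caminho da demonstração do texto: constrói-se o mesmo candidato $K\times Q$ com a multiplicação $(a,b)(a',b')=(a\varphi_b(a'),bb')$, verificam-se os axiomas de grupo, identificam-se $K$ e $Q$ como subgrupos complementares e a unicidade sai da factorização única de cada elemento. A única variação é cosmética: obténs a normalidade de $K$ via o teorema anterior sobre sucessões exactas curtas cindidas, enquanto o texto a verifica directamente pelo cálculo $(1,b)(a,1)(1,b)^{-1}=(\varphi_b(a),1)$.
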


\begin{proof}
Para $b\in Q$ designe-se $\varphi(b):K\to K$ por $\varphi_b.$ 

Seja $K\rtimes_\varphi Q$ o conjunto $|K|\times |Q|$ munido da operação de grupo $(a,b)\cdot (a',b') = (a\varphi_b(a'),bb').$ Mostre-se que $Q\rtimes_\varphi K$ é um grupo. 
\begin{itemize}
\item Identidade: $(1,1)$ satisfaz $(a,b)(1,1) = (a,b) = (1,1)(a,b).$
\item Associatividade: $$\left( (a,b)(a',b') \right) (a'',b'') = (a\varphi_b(a')\varphi_b(\varphi_{b'}(a'')),bb'b'') =  (a,b) \left((a',b')  (a'',b'')\right)$$
\item Invertibilidade: $$(a,b) (\varphi_{b^{-1}} (a^{-1}),b^{-1}) = (1,1) = (\varphi_{b^{-1}} (a^{-1}),b^{-1})  (a,b).$$
\end{itemize}

Como $K \rightarrow K \rtimes_\varphi Q;\ x\mapsto (1,x)$ e $Q \rightarrow K \rtimes_\varphi Q;\ x\mapsto (x,1)$ são homomorfismos injectivos pode pensar-se que $K$ e $Q$ são subgrupos de $K \rtimes_\varphi Q.$ Decorre da definição que eles são subgrupos complementares.

Mostre-se que $K\lhd K \rtimes_\varphi Q:$
$$(1,b) (a,1) (1,b)^{-1} = (\varphi_b(a),1).$$

A unicidade desta contrução é uma consequência da factorização de cada elemento num elemento de $K$ e de $Q$.
\end{proof}

\begin{defc}
Seja $(G,\cdot)$ um grupo, um outro grupo $(X,+)$ é chamado um {\it $G$-grupo} se tiver uma acção $\varphi:G\to \mathrm{Aut}(X).$

Estes grupos e os seus homomorfismos, $\rho:X\to Y$, (que além de serem homomorfismos de grupos devem satisfazer $\rho(\varphi_g^X(x))=\varphi_g^Y (\rho(x)),\ \forall g\in G,\ x\in X$), formam uma categoria, a qual se designa $\Grp (G).$
\end{defc}

Cada objecto de $\Grp(G)$ define um ponto sobre $G$: 
\[
\begin{tikzpicture}[text height=1.5ex,text depth=.25ex]
\matrix (m) [matrix of math nodes, column sep=.8cm, row sep=.6cm] 
{X\rtimes G & G.\\};
\path[ar]
([yshift=.5mm]m-1-1.east) edge node[auto]{$p$} ([yshift=.5mm]m-1-2.west)
([yshift=-.5mm]m-1-2.west) edge node[auto]{$s$} ([yshift=-.5mm]m-1-1.east);
\end{tikzpicture}
\]

Além disso, um homomorfismo desses grupos, $\rho:X\to Y$ corresponde a um morfismo de pontos sobre $G:$ 

$$\tilde{\rho}: X\rtimes G \to Y \rtimes G;\ \ (x,g) \mapsto (\rho(x),g).$$

Como pode verificar-se, esse procedimento é functorial.

\begin{tma}
Existe uma equivalência entre as categorias $\Grp (G)$ e $\mathrm{\mathbf{Pt}}_G (\Grp).$
\end{tma}

\begin{proof}
Nesta demonstração é subentendido que a operação binária é escrita aditivamente. 

Seja $\Phi:\Grp(G) \to \mathbf{Pt}_G(\Grp)$ anteriormente definido. É um functor: para $\rho:X\to Y$ e $\sigma:Y\to Z$: $$\Phi(\sigma\circ\rho) = \left((X \rtimes G\rightleftarrows G) \mathop{\to}\limits^{\tilde{\rho}} (Y \rtimes G\rightleftarrows G )\mathop{\to}\limits^{\tilde{\sigma}} (Z \rtimes G \rightleftarrows G)\right)= \Phi(X) \to \Phi(Z),$$ e $\Phi(1_X) = 1_{X\rtimes G \rightleftarrows G}.$

Por outro lado, existe um functor que atribui a cada ponto sobre $G$ um objecto de $\Grp(G):$

$$\Gamma: (p,s:X\rightleftarrows G) \mapsto K\rtimes_\varphi G,$$ onde $\varphi_g(x)$ é o elemento único, $x'\in K$ com $\ker p (x')=s(g)+\ker p(x) -s(g)$ e $K=K[p].$

Confirma-se intuitivamente que, no cálculo $\Gamma\Phi(X,\varphi:G\to\Aut(X))=(X,\tilde{\varphi})$, $\tilde\varphi_g(x)=\varphi_g(x),$ para todo o $g\in G,$ $x\in X.$

Inversamente, confirme-se que $\Phi\Gamma(p,s:X\rightleftarrows G)$ é o diagrama: 
\[
\begin{tikzpicture}
\matrix (m) [matrix of math nodes, row sep=1.7cm, column sep=1.7cm,text height=1.5ex,text depth=.25ex]{K\rtimes_{\tilde\varphi} G & G\\};
\path[ar]
([yshift=.5mm]m-1-1.east) edge node[auto]{$\pi_G$} ([yshift=.5mm]m-1-2.west)
([yshift=-.5mm]m-1-2.west) edge node[auto]{$\iota_G$} ([yshift=-.5mm]m-1-1.east);
\end{tikzpicture}
\] onde $K=K[p].$ Confirme-se que $(p,s)$ e $(\pi_G,\iota_G)$ são objectos isomorfos de $\mathbf{Pt}_B(\Grp).$

Começa-se por definir um morfismo $u:X\to K \rtimes_{\tilde\varphi} G;\ u(x) = (x_0,p(x)),$ onde $x_0$ é o único elemento de $K$ que verifica $\kappa(x_0) = x-\iota_G\pi_G(x).$ A função $u$ torna o diagrama seguinte comutativo:

\[
\begin{tikzpicture}
\matrix (m) [matrix of math nodes, row sep=1.7cm, column sep=1.7cm,text height=1.5ex,text depth=.25ex]{K & X & G\\K & K\rtimes_\psi G & G\\};
\path[ar] (m-1-1) edge node[auto]{$\kappa$} (m-1-2)
(m-2-1) edge node[auto,swap]{$i$} (m-2-2);
\path[-,double distance=.5mm] (m-1-1) edge[double] (m-2-1)
(m-1-3) edge[double] (m-2-3);
\path[ar] 
(m-1-2) edge node[auto]{$u$} (m-2-2)

([yshift=.5mm]m-2-2.east) edge node[auto]{$\pi_G$} ([yshift=.5mm]m-2-3.west)
([yshift=-.5mm]m-2-3.west) edge node[auto]{$\iota_G$} ([yshift=-.5mm]m-2-2.east)

([yshift=.5mm]m-1-2.east) edge node[auto]{$p$} ([yshift=.5mm]m-1-3.west)
([yshift=-.5mm]m-1-3.west) edge node[auto]{$s$} ([yshift=-.5mm]m-1-2.east);

\end{tikzpicture}
\]

Mostre-se que $u$ é um homomorfismo entre grupos. Tem-se $$u(x+y) = (p(x+y),z_0),$$ com $\kappa(z_0)=x+y-s\circ p(x)-s\circ p(y).$ Por outro lado, tem-se $$u(x)+u(y)= (p(x),x_0)+(p(y),y_0) = (p(x)+p(y),x_0+\tilde\varphi_{p(x)}y_0)$$ portanto, $u$ é um homomorfismo entre grupos se $z_0=x_0+\tilde\varphi_{p(x)}y_0,$ ou seja, se se verifica $\kappa(z_0)=\kappa(x_0)+\kappa(\tilde\varphi_{p(x)}y_0),$ que é um cálculo directo: $$\underbrace{x+y-sp(y)-sp(x)}_{\kappa(z_0)}=\underbrace{x-sp(x)}_{\kappa(x_0)} + \underbrace{s(x)+\kappa(y_0)-sp(x)}_{\kappa(\tilde\varphi_{p(x)}y_0)}.$$

Como $u$ é um homorfismo entre grupos e $s\circ\pi_G$ é o seu inverso, $u$ é um isomorfismo de grupos.
\end{proof}

\section{Mónadas e functores monádicos}

Apresentam-se brevemente as definições e os resultados sobre functores monádicos, os quais serão proveitosos para definir produtos semidirectos, na sua globalidade.

\begin{defc}
Uma {\it mónada} $\mathbf{T}$ é um terno $(T,\eta,\mu),$ onde $T:\C\to\C$ é um endofunctor, e $\eta:T\Rightarrow 1_\C,$ e $\mu:T^2\Rightarrow T$ são transformações naturais, que satisfazem os diagramas que se seguem:
\begin{multicols}{2}
\[\begin{tikzpicture}
\matrix (m) [matrix of math nodes, column sep=1.5cm, row sep=1.5cm]{T^3 & T^2\\T^2 & T\\};
\path[thin, ar, thin, double distance=.5mm]
(m-1-1) edge[double] node[auto]{$T\mu$} (m-1-2)
(m-1-1) edge[double] node[auto,swap]{$\mu T$} (m-2-1)
(m-1-2) edge[double] node[auto]{$\mu$} (m-2-2)
(m-2-1) edge[double] node[auto,swap]{$\mu$} (m-2-2);
\end{tikzpicture}\]
\[\begin{tikzpicture}
\matrix (m) [matrix of math nodes, column sep=1.5cm, row sep=1.5cm]{T & T^2 & T\\ & T &. \\};
\path[thin, ar, thin, double distance=.5mm]
(m-1-1) edge[double] node[auto]{$\eta T$} (m-1-2)
(m-1-3) edge[double] node[auto,swap]{$ T\eta$} (m-1-2)
(m-1-2) edge[double] node[auto]{$\mu$} (m-2-2);
\draw (m-1-1) edge[double,double distance=0.5mm] (m-2-2)
(m-1-3) edge[double,double distance=0.5mm] (m-2-2);
\end{tikzpicture}\]
\end{multicols}
\end{defc}

\begin{defc}
Seja $\mathbf{T}=(T,\eta,\mu)$ uma mónada numa categoria $\C.$ Uma {\it álgebra de tipo} $\mathbf{T}$ é um diagrama: 
\[
\begin{tikzpicture}
\matrix (m) [matrix of math nodes,column sep=1.5cm, row sep=1.5cm]{T(X) & X \\};
\path[ar] (m-1-1) edge node[auto]{$\zeta$} (m-1-2);
\end{tikzpicture}
\]
designado simplesmente por $(X,\zeta)$ e tal que
\[
\begin{tikzpicture}
\matrix (m) [matrix of math nodes,column sep=1.5cm, row sep=1.5cm]{T^2X & T X & X \\ TX & X & .\\};
\path[ar] (m-1-1) edge node[auto]{$\mu_X$} (m-1-2)
(m-1-1) edge node[auto,swap]{$T\zeta$} (m-2-1)
(m-2-1) edge node[auto,swap]{$\zeta$} (m-2-2)
(m-1-2) edge node[auto,swap]{$\zeta$} (m-2-2)
(m-1-3) edge node[auto,swap]{$\eta_X$} (m-1-2);
\draw[double distance=.5mm] (m-1-3) edge[double] (m-2-2);
\end{tikzpicture}
\]

Um morfismo entre álgebras $(X,\zeta)\to (Y,\xi)$ é um morfismo $\Xi:X\to Y$ tal que:
\[
\begin{tikzpicture}
\matrix (m) [matrix of math nodes,column sep=1.5cm, row sep=1.5cm]{TX & T Y\\ X& Y.\\};
\path[ar] (m-1-1) edge node[auto]{$T\Xi$} (m-1-2)
(m-1-1) edge node[auto,swap]{$\zeta$} (m-2-1)
(m-2-1) edge node[auto,swap]{$\Xi$} (m-2-2)
(m-1-2) edge node[auto]{$\xi$} (m-2-2);
\end{tikzpicture}
\]
Estas álgebras e os seus respectivos morfismos formam uma categoria que se designa $\C^\mathbf{T},$ que é por sua vez chamada de {\it categoria de Eilenberg-Moore.}
\end{defc}

\begin{tma}
Qualquer mónada surge de um par de functores adjuntos e vice-versa.\label{tmamon}
\end{tma}

\begin{nota}
Note-se que, dada uma mónada, o teorema acima diz que se pode construir uma adjunção que define essa mesma mónada, no entanto, o teorema não diz nada acerca da unicidade de uma adjunção com esta propriedade. De facto, pode haver várias adjunções que definem a mesma mónada.
\end{nota}

\begin{proof}
Seja $(T,\eta,\mu)$ uma mónada.
Definam-se os functores: $$U^\mathbf{T}: \C^\mathbf{T}\to \C;\ U^\mathbf{T}(A,\alpha) = A$$ e $$F^\mathbf{T}: \C \to  \C^\mathbf{T};\ F^\mathbf{T}C = (TC,\mu_C).$$ Confirma-se que $T=U^\mathbf{T}\circ F^\mathbf{T}$ e que são functores adjuntos.

Por outro lado, se $(F,U,\mu,\epsilon)$ é uma adjunção, atendendo à definição de mónada, confirma-se que $\mathbf{T}=(UF,\eta,U\epsilon F)$ é uma mónada. 
\end{proof}

Considere-se a «diferença» entre uma adjunção $F\dashv U,$ e a adjunção induzida pela categoria de Eilenberg-Moore da mónada: $\mathbf{T}=(U\circ F,\epsilon,\mu)$. Especificamente, podemos questionar quais são as condições sobre os functores adjuntos em que $\mathbf{A}^\mathbf{T} \cong \mathbf{B}.$ Para isso define-se o functor de «comparação» que se segue.

\begin{defc}
Sejam
\[
\begin{tikzpicture}
\matrix(m)[matrix of math nodes,column sep=1.5cm]{\C &|[yshift=-1.75mm]| ^{^\perp} &\mathbf{D}\\};
\path[ar]
([yshift=1.1mm]m-1-1.east) edge node[auto]{$F$} ([yshift=1.1mm]m-1-3.west);
\path[ar]
([yshift=-1.1mm]m-1-3.west) edge node[auto]{$U$} ([yshift=-1.1mm]m-1-1.east);
\end{tikzpicture}
\]
uma adjunção e $\C^\mathbf{T}$ a categoria de Eilenberg-Moore associada a esta adjunção. Ao functor $J:\mathbf{D}\to \C^\mathbf{T}$ que leva cada $D$ na álgebra $(U(D),U(\epsilon_D))$, e cada morfismo $f$ na sua imagem para $U$ chama-se {\it functor de comparação}.
\end{defc}

Veja-se o diagrama comutativo de functores:
\[
\begin{tikzpicture}
\matrix(m)[matrix of math nodes, column sep=2.5cm, row sep=2.5cm]{\mathbf{D} & & \C^\mathbf{T}\\
&\C&.\\};
\path[ar]
(m-1-1) edge node [auto] {$J$} (m-1-3)
([shift={(45:.5mm)}]m-1-1.south east) edge node [auto] {$U$} ([shift={(45:.5mm)}]m-2-2.north west)
([shift={(45:-.5mm)}]m-2-2.north west) edge node [auto] {$F$} ([shift={(45:-.5mm)}]m-1-1.south east)

([shift={(-45:.5mm)}]m-1-3.south west) edge node [auto,swap] {$F^\mathbf{T}$} ([shift={(-45:.5mm)}]m-2-2.north east)
([shift={(-45:-.5mm)}]m-2-2.north east) edge node [auto,swap] {$U^\mathbf{T}$} ([shift={(-45:-.5mm)}]m-1-3.south west);
\end{tikzpicture}
\]

\begin{defc}
Um {\it par reflexivo} é um par de morfismos $f,g:X\to Y$ tal que exista $s:Y\to X$ com $f\circ s = 1_Y = g\circ s.$

\end{defc}

Anuncia-se o célebre critério de Beck cuja demonstração se encontra na dissertação de Jonathan Beck \cite{Beck}.

\begin{tma}
Com a mesma notação acima, o functor de comparação é uma equivalência entre categorias se e só se
\begin{enumerate}
\item $U$ reflectir isomorfismos;
\item $\C$ tiver co-igualizadores de pares reflexivos;
\item $U$ preservar esses.
\end{enumerate}
\end{tma}

\begin{defc}
Um functor $Q:\C \to \mathbf{D}$ chamar-se-á {\it monádico} se tiver um adjunto à esquerda e se o functor de comparação associado for uma equivalência de categorias.
\end{defc}

Como exemplo inclui-se o functor $0_G^*$ cuja demonstração se encontra em \cite{Bourn3}.
\begin{tma}
O functor (\ref{letsusethis}), $\ker_G:\mathbf{Pt}_G(\Grp) \to \Grp$, é monádico. 
\end{tma}

Em \cite{Bourn3}, Dominique Bourn e George Janelidze definem produtos semidirectos num contexto generalizado e mostram que em $\Grp$ a sua construção é idêntica ao produto semidirecto clássico.

\begin{defc}
Seja $\C$ uma categoria pontuada finitamente completa e cocompleta.
\begin{enumerate}
\item Diz-se que $\C$ tem produtos semidirectos se, para todo o objecto $B$ de $\C,$ o functor de esquecimento $$\ker_B=U^B:\Pt{B}\to\C$$ for monádico.
\item Chama-se à mónada correspondente ao functor $U^B$ por $\mathbf{T}^B,$ cujas álgebras designar-se-ão álgebras de tipo $B.$
\item O produto semidirecto $(X,\xi)\rtimes B$ é o diagrama correspondente em $\Pt{B}$ de \[ 
\begin{tikzpicture} 
\matrix (m) [matrix of math nodes,column sep=1.5cm]{ T^B(X) & X.\\}; \path[ar] (m-1-1) edge node[auto]{$\xi$}(m-1-2); 
\end{tikzpicture} \]
\end{enumerate}
\end{defc}

\begin{ex}
Em \cite{Bourn3} é demonstrado que em $\Grp$ essa noção de produto semidirecto é equivalente à noção que se apresenta no início deste capítulo. Os grupoides também são um exemplo de uma categoria com produtos semidirectos como é dado em \cite{Metere}.
\end{ex}

\section{Produtos semidirectos em álgebras topológicas}
Em \cite{Borceux2} é mostrado que para $\Teoria$ semi-abeliana $\Top^\Teoria$ possui produtos semidirectos. Caracterizemo-los num caso particular.

\begin{lem}
Se $\C$ é uma categoria pontuada e finitamente completa e cocompleta para cada objecto $B$ de $\C$ os functores
\begin{align*}
F^B: \C &\to \mathbf{Pt}_B(\C) &\hspace{1cm} U^B: \hspace{.5cm} \mathbf{Pt}_B(\C) &\to \C\\
X &\mapsto ([0,1],\iota_B:X+B\rightleftarrows B) &\hspace{1cm} (p,s:A\rightleftarrows B) &\mapsto K[p],
\end{align*}
são adjuntos, com $F^B \dashv U^B.$
\end{lem}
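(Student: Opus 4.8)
O plano é estabelecer a adjunção $F^B\dashv U^B$ exibindo uma bijecção natural
\[
\Pt{B}(F^B(X),(p,s\colon A\rightleftarrows B))\cong \C(X,K[p])=\C(X,U^B(p,s\colon A\rightleftarrows B)),
\]
que surge da justaposição de duas propriedades universais: a do coproduto $X+B$ e a do núcleo $K[p]$. Recordo que $U^B$ é precisamente o functor núcleo $\ker_B$ já introduzido, cuja functorialidade pode assumir-se.

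Primeiro, desdobro o que é um morfismo em $\Pt{B}$ de $F^B(X)=(X+B,[0,1_B],\iota_B)$ para um ponto $(p,s\colon A\rightleftarrows B)$, onde $[0,1_B]\colon X+B\to B$ é o copar do morfismo zero $0\colon X\to B$ com $1_B$. Pela definição de morfismo em $\Pt{B}$ (um par $(\gamma_1,\gamma_2)$ com $\gamma_2=1_B$), tal morfismo é um único $\phi\colon X+B\to A$ sujeito a $p\circ\phi=[0,1_B]$ e $\phi\circ\iota_B=s$. Pela propriedade universal do coproduto, $\phi$ fica determinado pelo par $(\phi\circ\iota_X,\phi\circ\iota_B)$, onde $\iota_X\colon X\to X+B$ é a injecção. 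A condição $\phi\circ\iota_B=s$ fixa a segunda componente, e compondo $p\circ\phi=[0,1_B]$ com $\iota_X$ e com $\iota_B$ obtém-se, respectivamente, $p\circ(\phi\circ\iota_X)=0$ e $p\circ s=1_B$ (esta última automática por $(p,s)$ ser um ponto). Concluo que tais morfismos $\phi$ correspondem exactamente aos morfismos $\phi_X\colon X\to A$ que satisfazem $p\circ\phi_X=0$.

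Em segundo lugar, aplico a propriedade universal do núcleo: como $K[p]$ é o produto fibrado de $p$ com o morfismo zero, dar $\phi_X\colon X\to A$ com $p\circ\phi_X=0$ equivale a dar um único $\psi\colon X\to K[p]$ com $\ker p\circ\psi=\phi_X$. Compondo esta correspondência com a do parágrafo anterior obtenho a bijecção pretendida.

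Resta verificar a naturalidade em ambas as variáveis. Para $g\colon X'\to X$ em $\C$ tem-se $F^B(g)=g+1_B\colon X'+B\to X+B$, e a bijecção comuta com a pré-composição por $g$; para um morfismo $h$ de pontos sobre $B$ (isto é, $h\colon A\to A'$ com $p'\circ h=p$ e $h\circ s=s'$), o functor $U^B$ devolve a restrição induzida $K[p]\to K[p']$ e a bijecção comuta com a pós-composição por $h$. Ambas as verificações se reduzem à unicidade nas duas propriedades universais, pelo que são directas. O cuidado principal está na identificação correcta da projecção $[0,1_B]$ como o copar de $0$ com $1_B$; feito isto, a demonstração é a combinação transparente das propriedades universais do coproduto e do núcleo, e espero que o passo mais trabalhoso — ainda que rotineiro — seja a verificação da naturalidade.
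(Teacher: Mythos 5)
A sua proposta está correcta e segue essencialmente o mesmo caminho da demonstração do texto: ambas reduzem a adjunção à justaposição da propriedade universal do coproduto $X+B$ (que identifica os morfismos de pontos $F^B(X)\to(p,s)$ com os morfismos $X\to A$ que compostos com $p$ dão zero) com a propriedade universal do núcleo $K[p]$. A única diferença é de apresentação — o texto constrói a unidade $\eta_X$ e verifica a propriedade de morfismo universal, enquanto você exibe directamente a bijecção natural entre os conjuntos de morfismos — o que é uma reformulação equivalente da mesma ideia.
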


\begin{proof}
Obtém-se por cálculo directo:
$$U^BF^B(X)= K[X+B \mathop{\longrightarrow}^{[0,1]} B] $$ que se designa $B\flat X.$ Define-se a transformação natural $\eta: \id_\C \Rightarrow U^BF^B$ dos morfismos únicos nos diagramas da forma:
\[
\begin{tikzpicture}
\matrix (m) [matrix of math nodes, column sep=1cm, row sep=1cm,text height=1.5ex,text depth=.25ex]{B\flat X & X+B & B\\ B\flat X & X & B.\\};
\path[ar]
(m-1-1) edge node[auto]{$\kappa_X$} (m-1-2) 
(m-2-2) edge node[auto]{$\iota_X$} (m-1-2)
(m-2-2) edge node[auto,swap]{$0$} (m-2-3);
\path[ar,dashed] (m-2-2) edge node[auto]{$\eta_X$}(m-2-1);
\path[-,double distance=.5mm] (m-1-1) edge[double](m-2-1)
(m-1-3) edge[double](m-2-3);
\path[ar]
([yshift=.5mm]m-1-2.east) edge node[auto]{$[0,1]$} ([yshift=.5mm]m-1-3.west)
([yshift=-.5mm]m-1-3.west) edge node[auto]{$\iota_B$} ([yshift=-.5mm]m-1-2.east);
\end{tikzpicture}
\]
Mostre-se que a propriedade universal é válida:

Seja $u:X\to K[p]$ um morfismo em $\C.$ Prove-se que existe um único $f:X+B\to A$ que satisfaz a propriedade universal para $u.$

O morfismo $U^B(f)$ é o único que torna o diagrama seguinte comutativo:
\[
\begin{tikzpicture}
\matrix (m) [matrix of math nodes, column sep=1cm, row sep=1cm,text height=1.5ex,text depth=.25ex]{B\flat X & X+B\\K[p] & A,\\};
\path[ar] (m-1-1) edge node[auto,swap]{$U^Bf$} (m-2-1)
(m-1-1) edge node[auto]{$\kappa_X$} (m-1-2)
(m-2-1) edge node[auto,swap]{$\ker (p)$} (m-2-2)
(m-1-2) edge node[auto]{$f$} (m-2-2);
\end{tikzpicture}
\]
do qual se simplifica a condição universal: $\ker p$ é um monomorfismo $u= U^B(f)\eta_X$ se e somente se verifica $\ker(p) u =\ker(p) U^B(f)\eta_X = f\kappa_X\eta_X.$

Como $\kappa_X\eta_X=\iota_X,$ obtém-se ainda uma condição universal da adjunção mais simplificada: $\ker(p) u = f\iota_X.\label{c1}$

Ora, a existência de tal $f$ equivale à comutatividade do seguinte:

\[
\begin{tikzpicture}
\matrix (m) [matrix of math nodes, column sep=1.5cm, row sep=1.5cm]{X+B & & A\\ & B &,\\};
\path[ar]
(m-1-1) edge node[auto]{$f$} (m-1-3);
\path[ar]
([shift={(-45:-.5mm)}]m-2-2.north east) edge node[auto]{$s$} ([shift={(-45:-.5mm)}]m-1-3.south west)
([shift={(-45:.5mm)}]m-1-3.south west) edge node[auto]{$p$} ([shift={(-45:.5mm)}]m-2-2.north east);

\path[ar]
([shift={(45:.5mm)}]m-2-2.north west) edge node[auto,swap]{$\iota_B$} ([shift={(45:.5mm)}]m-1-1.south east)

([shift={(45:-.5mm)}]m-1-1.south east) edge node[auto,swap]{$[0,1]$} ([shift={(45:-.5mm)}]m-2-2.north west);
\end{tikzpicture}
\]

isto é, $p f = [0,1]$ e $f\iota_B = s.$

Define-se $f$ como o único morfismo que torna o diagrama comutativo:
\[
\begin{tikzpicture}
\matrix (m) [matrix of math nodes, column sep=1.5cm, row sep=1.5cm,text height=1.5ex,text depth=.25ex]{|[xshift=-4mm]|B & X+B & X\\& A & |[shift={(-45:-5mm)}]|K[p].\\};
\path[ar] (m-1-1) edge node[auto]{$\iota_B$} (m-1-2)
(m-1-1) edge node[auto,swap]{$s$} (m-2-2)
(m-1-3) edge node[auto,swap]{$\iota_X$} (m-1-2)
(m-1-3) edge (m-2-2)
(m-1-3) edge node[auto]{$u$} (m-2-3)
(m-2-3) edge node[auto]{$\ker (p)$} (m-2-2);
\path[ar,dashed] (m-1-2) edge node[auto]{$f$} (m-2-2);
\end{tikzpicture}
\]
De $f\iota_B = s$ e $f\iota_X = \ker(p)u$
obtém-se: $$p (f\iota_B) = ps = 1_B = [0,1]\iota_B,$$ e de modo igual $p f\iota_X = [0,1]\iota_X,$ do qual se conclui que a propriedade universal:

\[
\begin{tikzpicture}
\matrix (m) [matrix of math nodes, column sep=1cm, row sep=1cm,text height=1.5ex,text depth=.25ex]{X & \\ U^BF^B(X) & K[p]\\F^B(X) & (A\mathop{\rightleftarrows}\limits^p_s B)\\};
\path[ar] (m-1-1) edge node[auto,swap]{$\eta_X$} (m-2-1)
(m-1-1) edge node[auto]{$u$} (m-2-2)
(m-2-1) edge node[auto,swap]{$U^Bf$} (m-2-2);
\path[ar,dashed](m-3-1) edge node[auto]{$f$} (m-3-2);
\end{tikzpicture}
\]
é válida.
\end{proof}

Generaliza-se de modo directo o Teorema 2.2.1 de \cite{Inyangala} para álgebras topológicas. Este teorema revela que, para variedades semi-abelianas que satisfazem certos axiomas (que são designadas $\Omega$-{\it loops} em \cite{Inyangala}), os produtos semidirectos são uma generalização directa de produtos semidirectos em $\Grp.$

\begin{tma}
Seja $\Teoria$ uma teoria pontuada (com constante $0$) que contenha $+$ e $-$ entre as suas operações binárias que satisfazem os axiomas: 
\begin{align*}
 x+0=x; && 0+x=x; \\
(x+y)-y=x; &&(x-y)+y=x.
\end{align*}

O produto semidirecto em $\Top^\Teoria$ $$(X,\xi)\rtimes B$$ onde $B$ é um objecto de $\Top^\Teoria$ e $(X,\xi)$ é uma álgebra sobre $T^B$ é o espaço topológico $X\times B$ munido da estrutura \begin{equation*}\omega((x_1,b_1),\ldots,(x_n,b_n)) = (\xi(\omega(x_1+b_1,\ldots,x_n+b_n)-\omega(b_1,\ldots,b_n)),\omega(b_1,\ldots,b_n))\label{that}\end{equation*} para cada operação $\omega \in \Omega$ e elementos $b_i,x_i$ de $B$ e $X$, respectivamente.
\end{tma}

\begin{proof}
Considere-se o diagrama em $\Top^\Teoria$:
\[\begin{tikzpicture}
\matrix (m) [matrix of math nodes, row sep=1.7cm, column sep=1.7cm,text height=1.5ex,text depth=.25ex]{X & A& B,\\};
\path[ar]
(m-1-1) edge node[auto]{$\kappa$} (m-1-2)
([yshift=.5mm]m-1-2.east) edge node[auto]{$p$} ([yshift=.5mm]m-1-3.west)
([yshift=-.5mm]m-1-3.west) edge node[auto]{$s$} ([yshift=-.5mm]m-1-2.east);
\end{tikzpicture}
\]onde $\kappa$ é o núcleo de $p$ e $ps=1_B.$ Nesta demonstração está subentendido que os núcleos escritos com $\kappa$ são inclusões de álgebras e, assim, podemos evitar escrevê-los sempre. Definem-se as funções ($p(a-s p(a))=p(a)-p(a)=0$):
\begin{align*}\zeta: X\times B \to A;\hspace{5mm}& (x,b)\to \kappa(x)+s(b),\\
\chi:A\to X\times B;\hspace{5mm} & a\to (\kappa^{-1}(a-s p(a)),p(a)),\end{align*} que são contínuas:

$\zeta$ é uma composição de funções contínuas. Prova-se que $\chi$ é uma aplicação ao seleccionar um aberto $U$ em $X\times B.$ Como $X\times B$ está contido em $A\times B$, existe um aberto $V$ em $A\times B$ com $U= (X\times B)\cap V.$ Como $\beta:A\to X\times B;\ a\mapsto (a-\beta p(a),p(a))$ é contínua $$\chi^{-1}(U)= \chi^{-1}(V) = \kappa^{-1}(\beta^{-1}(V))$$ é aberta.

Tem-se que $\chi$ e $\zeta$ são inversas uma da outra, isto é, $A$ e $X\times B$ são álgebras bijectivas.

O diagrama seguinte é comutativo como se pode verificar:
\[
\begin{tikzpicture}
\matrix (m) [matrix of math nodes, row sep=1.7cm, column sep=1.7cm,text height=1.5ex,text depth=.25ex]{X & X\times B & B\\ X & A& B.\\};
\path[ar] (m-1-1) edge node[auto]{$<1,0>$} (m-1-2)
(m-2-1) edge node[auto]{$\kappa$} (m-2-2);
\path[-,double distance=.5mm] (m-1-1) edge[double] (m-2-1)
(m-1-3) edge[double] (m-2-3);
\path[ar] 
([xshift=-.5mm]m-1-2.south) edge node[auto,swap]{$\zeta$} ([xshift=-.5mm]m-2-2.north)
([xshift=.5mm]m-2-2.north) edge node[auto,swap]{$\chi$} ([xshift=.5mm]m-1-2.south)

([yshift=.5mm]m-2-2.east) edge node[auto]{$p$} ([yshift=.5mm]m-2-3.west)
([yshift=-.5mm]m-2-3.west) edge node[auto]{$s$} ([yshift=-.5mm]m-2-2.east)

([yshift=.5mm]m-1-2.east) edge node[auto]{$\pi_B$} ([yshift=.5mm]m-1-3.west)
([yshift=-.5mm]m-1-3.west) edge node[auto]{$<0,1>$} ([yshift=-.5mm]m-1-2.east);

\end{tikzpicture}
\]

Caracterizem-se as álgebras sobre a mónada $\mathbf{T}^B = (T^B,\eta,\mu)$, onde $T^B = U^B F^B$, $\eta$ é como foi definida no teorema anterior e $\mu$ é definida pela unicidade do morfismo entre as linhas exactas em:
\[
\begin{tikzpicture}
\matrix (m) [matrix of math nodes, row sep=1.7cm, column sep=1.7cm,text height=1.5ex,text depth=.25ex]{0 & B\flat(B\flat X) & (B\flat X)+B & B & 0\\0 & B\flat X & X+B & B & 0.\\};
\path[ar]

(m-1-3) edge node[auto,swap]{$[\kappa_{B,X},\iota_B]$} (m-2-3)

(m-1-1) edge (m-1-2)
(m-2-1) edge (m-2-2)
(m-1-4) edge (m-1-5)
(m-2-4) edge (m-2-5)

(m-1-2) edge node[auto]{$\kappa_{B,B\flat X}$} (m-1-3)
(m-1-3) edge node[auto]{$[0,1]$} (m-1-4)
(m-2-2) edge node[auto,swap]{$\kappa_{B,X}$} (m-2-3)
(m-2-3) edge node[auto,swap]{$[0,1]$} (m-2-4);
\path[ar,dashed] (m-1-2) edge node[auto,swap]{$\mu^B_X$} (m-2-2);
\path[-,double distance=.5mm] (m-1-4) edge[double] (m-2-4);

\end{tikzpicture}
\]
Confirma-se que $\mathbf{T}^B$ é uma mónada. Seja $t$ um termo de aridade $2n$. Da construção de $X+B$ obtém-se que cada elemento do coproduto é da forma $t(x_1,\ldots,x_n,b_1,\ldots,b_n).$ Temos então

\begin{align*}
T^BX = B\flat X &= K[X+B \mathop{\longrightarrow}^{[0,1]} B]\\
&= \{t(x_1,\ldots,x_n,b_1,\ldots,b_n)\ |\ [0,1](t(0,\ldots,0,b_1,\ldots,b_n))=0\}\\
&= \{t(x_1,\ldots,x_n,b_1,\ldots,b_n)\ |\ t(0,\ldots,0,b_1,\ldots,b_n)=0\}.
\end{align*}

O ponto $A\mathop{\rightleftarrows}\limits^p_s B$ corresponde à álgebra $(X,\xi)$ que satisfaz
\[
\begin{tikzpicture}
\matrix (m) [matrix of math nodes, row sep=1.7cm, column sep=1.7cm,text height=1.5ex,text depth=.25ex]{B\flat X & X + B & B\\ X & A& B.\\};
\path[ar] (m-1-1) edge node[auto,swap]{$\xi$} (m-2-1)
(m-1-1) edge node[auto]{$\kappa_{B,X}$} (m-1-2)
(m-2-1) edge node[auto,swap]{$\kappa$} (m-2-2);
\path[-,double distance=.5mm] (m-1-3) edge[double] (m-2-3);
\path[ar] 
(m-1-2) edge node[auto,swap]{$[s,\kappa]$} (m-2-2)

([yshift=.5mm]m-2-2.east) edge node[auto]{$p$} ([yshift=.5mm]m-2-3.west)
([yshift=-.5mm]m-2-3.west) edge node[auto]{$s$} ([yshift=-.5mm]m-2-2.east)

([yshift=.5mm]m-1-2.east) edge node[auto]{$[0,1]$} ([yshift=.5mm]m-1-3.west)
([yshift=-.5mm]m-1-3.west) edge node[auto]{$i_B$} ([yshift=-.5mm]m-1-2.east);

\end{tikzpicture}
\]

Verifique-se a expressão no teorema. Aplicando $\chi\zeta = 1$ vem
\begin{align*}
\omega((x_1,b_1),\ldots (x_n,b_n)) = &\chi(\omega(\zeta(x_1,b_1),\ldots,\zeta(x_n,b_n)))\\
=& \chi(\omega(x_1+s(b_1),\ldots,x_n+s(b_n)))\\
\end{align*}
e agora escrevendo $\omega(\mathbf{y})$ para $\omega(y_1,\ldots,y_n),$ e $f\mathbf{y}$ para $(f(y_1),\ldots,f(y_n))$ e avaliando $\chi$ explicitamente tem-se:

$$\chi(\omega(\mathbf{x}+s\mathbf{b}) = (\omega(\mathbf{x}+s\mathbf{b})-sp\omega(\mathbf{x}+s\mathbf{b}),p\omega(\mathbf{x}+s\mathbf{b})).$$

Obtemos também $p\omega(\mathbf{x}+s\mathbf{b})=\omega(p\mathbf{x}+ps\mathbf{b})=\omega(\mathbf{b}),$ e ainda se verifica\\ $\omega(\mathbf{x}+s\mathbf{b})-s p\omega(\mathbf{b}) = [s,\kappa](\omega(\mathbf{x}+s\mathbf{b})-\omega(\mathbf{b})).$ Esta expressão juntamente com o último diagrama permite-nos fazer as últimas simplificações:

\begin{align*}
\omega(\mathbf{(x,b)})&=
([s,\kappa](\omega(\mathbf{x}+s\mathbf{b})-\omega(\mathbf{b})),\omega(\mathbf{b})) \\
&= (\xi(\omega(\mathbf{x}+s\mathbf{b})-\omega(\mathbf{x}+\mathbf{b})-\omega(\mathbf{x}+s\mathbf{b})-\omega(\mathbf{b})),\omega(\mathbf{b})).
\end{align*}
\end{proof}

Em \cite{Gray} prova-se que esta descrição de produtos semidirectos é válida numa variedade se e só se ela é uma variedade de {\em $\Omega$-loops.} Os resultados de \cite{Gray,Inyangala} levaram a uma descrição mais geral de produtos semidirectos em variedades topológicas envolvendo produtos que generalizam esta (veja-se \cite{Clementino2}).

\begin{cor}
O produto semidirecto de dois grupos topológicos é o produto semidirecto dos grupos subjacentes munido da topologia produto.
\end{cor}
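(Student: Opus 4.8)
The plan is to read this off as the special case of the preceding theorem in which $\Teoria$ is the theory of groups. First I would note that $\GrpTop = \Top^\Teoria$ for $\Teoria$ the theory of groups, and check that this theory meets the four hypotheses of the theorem: taking $+$ to be the multiplication, the constant $0$ to be the neutral element, and the required binary $-$ to be $x - y := x + (-y) = xy^{-1}$ built from the group inverse, the equations $x+0=x$, $0+x=x$, $(x+y)-y=x$ and $(x-y)+y=x$ are immediate consequences of the group axioms. Thus, in its term-equivalent presentation, $\GrpTop$ is a variety of the required form; since groups are moreover semi-abelian, the semidirect product $(X,\xi)\rtimes B$ exists, and the theorem applies without modification.

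By the theorem the underlying topological space of $(X,\xi)\rtimes B$ is $X\times B$ with the product topology, so the topological half of the statement is free. It remains to identify the algebraic structure produced by the theorem's formula with the classical group law. I would specialize that formula to the binary operation $\omega=+$. Keeping the notation $\zeta(x,b)=\kappa(x)+s(b)$ and $\chi(a)=(\kappa^{-1}(a-sp(a)),p(a))$ from the theorem's proof, the induced operation on $X\times B$ is $\chi\bigl(\zeta(x_1,b_1)+\zeta(x_2,b_2)\bigr)$, and a direct computation inside $A$ yields $(x_1\,\varphi_{b_1}(x_2),\,b_1b_2)$, where $\varphi\colon B\to\Aut(X)$ is the action attached to the point $(p,s)$ under the equivalence $\Grp(B)\simeq\mathbf{Pt}_B(\Grp)$ established earlier. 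This is exactly the classical multiplication on $X\rtimes_\varphi B$. The nullary and unary operations (identity and inverse) are then determined by the group structure, and one verifies they coincide with the classical ones.

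Since both the topology (the product topology) and the group operations match the classical semidirect product, the corollary follows. The one step deserving care --- and the place where I would spend most of the effort --- is the passage from the abstract Eilenberg--Moore structure map $\xi$ to the concrete action $\varphi$: this is precisely what the equivalence $\Grp(B)\simeq\mathbf{Pt}_B(\Grp)$ supplies, and once $\xi$ is rewritten through $\varphi$ the theorem's formula collapses to $(x_1,b_1)(x_2,b_2)=(x_1\varphi_{b_1}(x_2),b_1b_2)$ by a routine calculation in $A$.
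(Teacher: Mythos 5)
Your proof is correct and follows exactly the route the paper intends: the corollary is stated without proof precisely because it is the specialization of the preceding theorem to the theory of groups (with $x-y:=xy^{-1}$), and you verify the hypotheses and carry out that specialization. Your extra step of identifying the theorem's formula with the classical law $(x_1,b_1)(x_2,b_2)=(x_1\varphi_{b_1}(x_2),b_1b_2)$ via the equivalence $\Grp(B)\simeq\mathbf{Pt}_B(\Grp)$ is the content the paper leaves implicit, and it is done correctly.
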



\appendix
\pagestyle{plain}
\chapter{Epimorfismos e o Teorema de Barr-Kock}
Para definir categorias semi-abelianas, não se pode prescindir algumas classes de epimorfimos definidas em seguida.
\begin{defc}
    Seja $f:A\to B$ um morfismo em \C\
    \begin{itemize}
        \item $f$ diz-se um {\emph epimorfismo forte} se, em qualquer quadrado comutativo
\[
\begin{tikzpicture}
\matrix (m) [matrix of math nodes, row sep=1.4cm, column sep=1.4cm] {A & B\\ C & D\\};
\path[ar] (m-1-1) edge node[auto]{$f$}(m-1-2)
(m-1-2) edge[dashed] node[auto,swap]{$t$} (m-2-1)
(m-1-2) edge node[auto]{$h$} (m-2-2)
(m-1-1) edge node[auto,swap]{$g$} (m-2-1)
(m-2-1) edge node[auto,swap]{$m$} (m-2-2);
\end{tikzpicture}
\]
onde $m$ é um monomorfismo, existir um único $t$ que torne o diagrama comutativo.

        \item $f$ diz-se {\emph um epimorfismo regular} se for um co-igualizador de um par de morfismos.
        \item $f$ diz-se {\emph epimorfismo cindido} se tiver inversa à direita.
        \item $f$ diz-se um epimorfismo forte (regular, cindido, resp.) {\emph estável para produtos fibrados} se o seu produto fibrado ao longo de qualquer morfismo for um epimorfismo forte (regular, cindido, respectivamente).
    \end{itemize}
\end{defc}

\begin{pro} Numa categoria \C\ temos as inclusões:
\[\mathrm{epi.\ cindidos\ \subset\ epi.\ regulares\ \subset\ epi.\ fortes\ \subset\ epimorfismos }\]
\end{pro}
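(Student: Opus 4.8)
O plano é estabelecer as três inclusões separadamente, da esquerda para a direita, usando repetidamente dois factos elementares: todo o epimorfismo cindido é epimorfismo (se $fs=1_B$ e $uf=vf$, então $u=ufs=vfs=v$) e todo o epimorfismo regular é epimorfismo (sendo um co-igualizador, a sua propriedade universal dá o cancelamento à direita). Para a primeira inclusão, dado um epimorfismo cindido $f:A\to B$ com secção $s$, mostro que $f$ é o co-igualizador do par $1_A, sf:A\rightrightarrows A$. Verifico primeiro que $f$ co-iguala este par, pois $f\circ 1_A = f = (fs)f = f\circ(sf)$; depois, dado $g:A\to C$ com $g=g\circ sf$, defino $h=gs:B\to C$ e confirmo $hf=gsf=g$, sendo a unicidade de $h$ consequência de $f$ ser epimorfismo. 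Logo $f$ é um epimorfismo regular.

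Para a segunda inclusão, seja $f:A\to B$ o co-igualizador de $u,v:R\to A$ e considere-se um quadrado comutativo $m\circ g = h\circ f$ com $m$ monomorfismo, como na definição de epimorfismo forte. Como $fu=fv$, obtenho $mgu = hfu = hfv = mgv$, e sendo $m$ monomorfismo segue $gu=gv$; a propriedade universal do co-igualizador fornece então um único $t:B\to C$ com $tf=g$. Resta verificar a comutatividade do triângulo inferior: de $mtf = mg = hf$ e de $f$ ser epimorfismo conclui-se $mt=h$. A unicidade de $t$ é a do co-igualizador. Portanto $f$ é um epimorfismo forte.

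A terceira inclusão é o passo mais delicado, pois a definição adoptada de epimorfismo forte exige apenas a propriedade de levantamento diagonal único, e não, à partida, que $f$ seja epimorfismo; o objectivo é extrair o cancelamento à direita dessa unicidade. Dados $f:A\to B$ forte e $u,v:B\to C$ com $uf=vf$, formo o igualizador $e:E\to B$ de $u$ e $v$, através do qual $f$ se factoriza como $f=e\circ f'$ para um único $f':A\to E$. Aplico o levantamento ao quadrado de topo $f$, lado esquerdo $f'$, base o monomorfismo $e$ (todo o igualizador é monomorfismo) e lado direito $1_B$, que comuta pois $ef'=f=1_B f$; obtenho $t:B\to E$ com $et=1_B$, pelo que $e$ é monomorfismo e epimorfismo cindido, logo isomorfismo, e um igualizador que é isomorfismo força $u=v$. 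O principal obstáculo é precisamente este: a passagem de \emph{levantamento diagonal único} para \emph{epimorfismo} não é puramente formal e requer a existência do igualizador de $u$ e $v$ (ou do seu par núcleo)---hipótese inofensiva nas categorias finitamente completas que nos interessam, mas que convém explicitar. As duas primeiras inclusões, ao contrário, são puramente diagramáticas.
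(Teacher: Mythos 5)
A tua demonstração está correcta e, nas duas primeiras inclusões, segue exactamente o mesmo caminho da dissertação: um epimorfismo cindido $f$ com secção $s$ é o co-igualizador do par $(1_A,\, s\circ f)$, e um co-igualizador é forte porque o monomorfismo $m$ permite cancelar ($mgu=mgv \Rightarrow gu=gv$) e invocar a propriedade universal para obter a diagonal. A diferença genuína está na terceira inclusão. O texto despacha-a com um quadrado cujo lado inferior é a identidade (lado esquerdo $v\circ f$, lado direito $u$) e conclui de imediato que a unicidade dá $u=v$; mas, nesse quadrado, o triângulo inferior $1\circ t=u$ já força $t=u$, e $v$ só é uma diagonal \emph{desse mesmo} quadrado se $1\circ v=u$, isto é, se $u=v$ --- precisamente o que se quer provar. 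Tal como está escrita, a unicidade não fornece nada de novo. O teu argumento via o igualizador $e$ de $u$ e $v$ --- factorizar $f=e\circ f'$, aplicar a ortogonalidade ao quadrado com base no monomorfismo $e$ e lado direito $1_B$ para obter uma secção de $e$, e concluir que $e$ é um isomorfismo, logo $u=v$ --- é o argumento padrão e fecha essa lacuna, ao preço, que assinalas correctamente, de exigir a existência de igualizadores; como todas as categorias consideradas na dissertação são finitamente completas, a hipótese é inócua. Em resumo: mesma estratégia nos dois primeiros passos, e um terceiro passo diferente, mais cuidadoso e de facto mais sólido do que o esboço do texto.
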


\begin{proof}
A demonstração é concisamente ilustrada nas figuras que se seguem.
\begin{multicols}{2}
Um epimorfismo cindido é regular:\\
$
\begin{tikzpicture}
\matrix (m) [matrix of math nodes, column sep=1.5cm, row sep=1cm]{X & X & Y\\ & & Z\\};
\path[ar] 
([yshift=.5mm]m-1-1.east) edge node[auto]{$s\circ f$} ([yshift=.5mm]m-1-2.west)
([yshift=-.5mm]m-1-1.east) edge node[auto,swap]{$1_X$} ([yshift=-.5mm]m-1-2.west)
([yshift=.5mm]m-1-2.east) edge node[auto]{$f$} ([yshift=.5mm]m-1-3.west)
([yshift=-.5mm]m-1-3.west) edge node[auto]{$s$} ([yshift=-.5mm]m-1-2.east)
(m-1-2) edge node[auto,swap]{$z$} (m-2-3);
\path[ar,dashed](m-1-3) edge node[auto]{$z\circ s$} (m-2-3);
\end{tikzpicture}
$
\end{multicols}
Um epimorfismo regular é forte:
\vspace{-5mm}
\begin{multicols}{3}
$
\begin{tikzpicture}
\matrix (m) [matrix of math nodes, column sep=1.4cm, row sep=1.4cm]{\cdot & \cdot & \cdot \\ & \cdot & \cdot \\};
\path[ar] 
([yshift=.5mm]m-1-1.east) edge node[auto]{$r_1$} ([yshift=.5mm]m-1-2.west)
([yshift=-.5mm]m-1-1.east) edge node[auto,swap]{$r_2$} ([yshift=-.5mm]m-1-2.west)
(m-1-2) edge node[auto]{$f$} (m-1-3)
(m-1-2) edge node[auto,swap]{$z$} (m-2-2)
(m-1-3) edge (m-2-3)
(m-2-2) edge node[auto,swap]{$m$} (m-2-3);
\end{tikzpicture}
$

\[\vspace{1.2cm}\hspace{4mm}\Rightarrow\ \  m\circ z \circ r_1 =  m\circ z \circ r_2\ \  \Rightarrow \]

$
\begin{tikzpicture}
\matrix (m) [matrix of math nodes, column sep=1.4cm, row sep=1.4cm]{\cdot & \cdot & \cdot \\ & \cdot & \cdot \\};
\path[ar] 
([yshift=.5mm]m-1-1.east) edge node[auto]{$r_1$} ([yshift=.5mm]m-1-2.west)
([yshift=-.5mm]m-1-1.east) edge node[auto,swap]{$r_2$} ([yshift=-.5mm]m-1-2.west)
(m-1-2) edge node[auto]{$f$} (m-1-3)
(m-1-2) edge node[auto,swap]{$z$} (m-2-2)
(m-1-3) edge (m-2-3)
(m-2-2) edge node[auto,swap]{$m$} (m-2-3);
\path[ar,dashed] (m-1-3) edge (m-2-2);
\end{tikzpicture}
$
\end{multicols}

Um epimorfismo forte é um epimorfismo:\vspace{-5mm}
\begin{multicols}{3}
\[\vspace{1.3cm}u\circ f = v\circ f\]

$
\begin{tikzpicture}
\matrix (m) [matrix of math nodes, column sep=1.5cm, row sep=1.5cm, text height=1.5ex,text depth=.25ex]{\cdot & \cdot & \cdot \\ \cdot & \cdot & \\};
\path[ar] 
([yshift=.5mm]m-1-2.east) edge node[auto]{$u$} ([yshift=.5mm]m-1-3.west)
([yshift=-.5mm]m-1-2.east) edge node[auto,swap]{$v$} ([yshift=-.5mm]m-1-3.west)
(m-1-1) edge node[auto]{$f$} (m-1-2)
(m-1-1) edge node[auto,swap]{$v\circ f$} (m-2-1)
(m-1-2) edge node[auto]{u} (m-2-2)
(m-2-1) edge node[auto,swap]{$1$} (m-2-2);
\path[ar,dashed] (m-1-2) edge node[auto,swap]{$\exists !$} (m-2-1);
\end{tikzpicture}
$
\vspace{1.3cm}
\[\text{Unicidade }\Rightarrow u=v.\]
\end{multicols}
\vspace{-1.5cm}
\end{proof}

De acordo com o pretendido nesta dissertação, foi indispensável a introdução do teorema de Barr-Kock como \cite{Bourn}.

\begin{tma}{Teorema de Barr-Kock}\\
Seja
\[\begin{tikzpicture}[text height=1.5ex,text depth=.25ex]
\matrix (m) [matrix of math nodes, column sep=0.6cm, row sep=0.6cm] {R[f] && X && Y\\
& \node[draw]{2}; & & \node[draw]{1}; &\\
R[f'] && X' && Y'\\};
\path[ar]
(m-1-1) edge node[auto,swap]{$\gamma$} (m-3-1) 
(m-1-3) edge node[auto,swap]{$g$} (m-3-3)
(m-1-5) edge node[auto]{$h$} (m-3-5)
(m-1-3) edge node[auto]{$f$} (m-1-5)
(m-3-3) edge node[auto,swap]{$f'$} (m-3-5)
([yshift=.5mm]m-1-1.east) edge ([yshift=.5mm]m-1-3.west)
([yshift=-.5mm]m-1-1.east) edge ([yshift=-.5mm]m-1-3.west)
([yshift=.5mm]m-3-1.east) edge ([yshift=.5mm]m-3-3.west)
([yshift=-.5mm]m-3-1.east) edge ([yshift=-.5mm]m-3-3.west);
\end{tikzpicture}
\]
um diagrama qualquer numa categoria com produtos fibrados.
    \begin{enumerate}
        \item Se $(g,\gamma)$ é tal que $\fbox{\rm2}$ é um produto fibrado e $f$ é um epimorfismo forte estável para produtos fibrados, então, $\fbox{\rm1}$ é um produto fibrado.
        \item Além disso, se $g$ é um monomorfismo, então, $h$ também é.
    \end{enumerate}
\label{bk}
\end{tma}

\begin{cor}
\label{cor}
Seja \C\ uma categoria finitamente completa. Se $f:X\to Y$ se factoriza como $f=m\circ e,$ onde $e$ é um epimorfismo forte estável para produtos fibrados, então, $R[e] \simeq R[f]$ se e só se $m$ é um monomorfismo.
\end{cor}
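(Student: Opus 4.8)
The plan is to prove both implications through the canonical comparison morphism between the two kernel pairs, reducing the substantive direction to the Barr--Kock theorem (Teorema~\ref{bk}). Write $e\colon X\to Q$ and $m\colon Q\to Y$, so that $f=m\circ e$. There is always a unique comparison $\gamma\colon R[e]\to R[f]$: denoting by $p_1,p_2$ the projections of the kernel pair $R[e]$ and by $q_1,q_2$ those of $R[f]$, the equalities $f\circ p_1=m\circ e\circ p_1=m\circ e\circ p_2=f\circ p_2$ together with the universal property of $R[f]=X\times_Y X$ produce the unique $\gamma$ with $q_i\circ\gamma=p_i$. Throughout I read the assertion $R[e]\simeq R[f]$ as saying that this particular $\gamma$ is an isomorphism.

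For the implication $m$ monomorphism $\Rightarrow R[e]\simeq R[f]$ no appeal to the strong-epimorphism hypothesis is needed; it is a general fact about kernel pairs. A morphism $(a,b)\colon Z\to X\times X$ factors through $R[f]$ iff $f\circ a=f\circ b$, that is iff $m\circ e\circ a=m\circ e\circ b$; since $m$ is a monomorphism this holds iff $e\circ a=e\circ b$, i.e.\ iff $(a,b)$ factors through $R[e]$. Hence $R[e]$ and $R[f]$ represent the same subobject of $X\times X$, so the comparison $\gamma$ is an isomorphism.

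For the converse I would feed the data into Teorema~\ref{bk}. Form the diagram whose top row is $R[e]\rightrightarrows X\xrightarrow{e}Q$, whose bottom row is $R[f]\rightrightarrows X\xrightarrow{f}Y$, and whose vertical morphisms are $\gamma\colon R[e]\to R[f]$, the identity $1_X\colon X\to X$, and $m\colon Q\to Y$; this is exactly the Barr--Kock configuration with $g=1_X$ and $h=m$. The key observation is that the left-hand square \fbox{2} is a pullback precisely when $\gamma$ is an isomorphism: pulling a projection $q_i\colon R[f]\to X$ back along $1_X$ returns $R[f]$ itself, and the comparison from $R[e]$ into this pullback is exactly $\gamma$. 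Thus the hypothesis $R[e]\simeq R[f]$ is exactly the statement that \fbox{2} is a pullback. Since $e$ is by assumption a strong epimorphism stable under pullbacks, part~1 of Teorema~\ref{bk} gives that \fbox{1} is a pullback, and part~2, applied with the monomorphism $g=1_X$, forces $h=m$ to be a monomorphism. The only delicate point is the diagrammatic bookkeeping: identifying ``\fbox{2} is a pullback'' with ``$\gamma$ is an isomorphism'' and choosing $g=1_X$ so that the monomorphism-transfer clause of Barr--Kock becomes applicable. Once the configuration is set up correctly both directions are immediate, the easy one needing nothing beyond the universal property of kernel pairs and the hard one being a direct invocation of Teorema~\ref{bk}.
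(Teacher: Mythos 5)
Your proof is correct and follows exactly the route the paper intends: the corollary is stated as an immediate consequence of the Teorema de Barr--Kock (for which the paper supplies no separate written proof), and your specialization --- top row $R[e]\rightrightarrows X\xrightarrow{e}Q$, bottom row $R[f]\rightrightarrows X\xrightarrow{f}Y$, with $g=1_X$ and $h=m$, together with the observation that $\fbox{2}$ being a pullback along $1_X$ is precisely the comparison $\gamma$ being an isomorphism --- is the evident intended derivation. The easy direction via the subobject characterization of kernel pairs, and your explicit reading of $R[e]\simeq R[f]$ as the canonical $\gamma$ being invertible, are both exactly right.
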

\backmatter
\clearpage
\addcontentsline{toc}{chapter}{Índice Remissivo}
\printindex

\end{document}